\newtheorem{theorem}{Theorem}[section]
\newtheorem{lemma}[theorem]{Lemma}
\newtheorem{proposition}[theorem]{Proposition}
\newtheorem{corollary}[theorem]{Corollary}
\newtheorem{exAux}[theorem]{Example}
\newtheorem{Def}[theorem]{Definition}
\newenvironment{definition}{\begin{Def} \rm}{\end{Def}}
\newtheorem{Note}[theorem]{Note}
\newenvironment{note}{\begin{Note} \rm}{\end{Note}}
\newtheorem{Problem}[theorem]{Problem}
\newtheorem{Rem}[theorem]{Remark}
\newtheorem{Not}[theorem]{Notation}
\newtheorem{Conj}[theorem]{Conjecture}
\newtheorem{Ass}[theorem]{Assumption}
\newenvironment{proof}{\medskip\noindent{\bf Proof.\ }}{\qed\medskip}
\newenvironment{proofof}[1]{\medskip\noindent{\bf Proof  of {#1}.\ 
}}{\qed\medskip}
\newcommand{\qed}{\hfill\mbox{$\Box$\qquad\qquad}}
\newcommand{\F}{\mathbb{F}}
\newcommand{\vphi}{\varphi}
\renewcommand{\th}{\theta}
\newcommand{\vth}{\vartheta}
\newcommand{\tth}{\tilde{\theta}}
\newcommand{\tvphi}{\tilde{\varphi}}
\newcommand{\tphi}{\tilde{\phi}}
\newcommand{\ta}{\tilde{a}}
\newif\ifDRAFT
\begin{document}

\begin{center}
\LARGE\bf
Leonard pairs having specified end-entries
\end{center}

\begin{center}
\Large
Kazumasa Nomura
\end{center}

\bigskip

{\small
\begin{quote}
\begin{center}
{\bf Abstract}
\end{center}
Fix an algebraically closed field $\F$ and an integer $d \geq 3$.
Let $V$ be a vector space over $\F$ with dimension $d+1$.
A Leonard pair on $V$ is an ordered pair of diagonalizable linear transformations
$A: V \to V$ and $A^* : V \to V$,
each acting in an irreducible tridiagonal fashion on 
an eigenbasis for the other one.
Let $\{v_i\}_{i=0}^d$ (resp.\ $\{v^*_i\}_{i=0}^d$) be such an eigenbasis for $A$ 
(resp.\  $A^*$).
For $0 \leq i \leq d$ define a linear transformation $E_i : V \to V$ 
such that $E_i v_i=v_i$ and $E_i v_j =0$ if $j \neq i$ $(0 \leq j \leq d)$.
Define $E^*_i : V \to V$ in a similar way.
The sequence $\Phi =(A, \{E_i\}_{i=0}^d, A^*, \{E^*_i\}_{i=0}^d)$
is called a Leonard system on $V$ with diameter $d$.
With respect to the basis $\{v_i\}_{i=0}^d$,
let $\{\th_i\}_{i=0}^d$ (resp.\ $\{a^*_i\}_{i=0}^d$) be the diagonal entries
of the matrix representing $A$ (resp.\ $A^*$).
With respect to the basis $\{v^*_i\}_{i=0}^d$,
let $\{\th^*_i\}_{i=0}^d$ (resp.\ $\{a_i\}_{i=0}^d$) be the diagonal entries
of the matrix representing $A^*$ (resp.\ $A$).
It is known that $\{\th_i\}_{i=0}^d$ (resp. $\{\th^*_i\}_{i=0}^d$) 
are mutually distinct,
and the expressions
$(\th_{i-1}-\th_{i+2})/(\th_i-\th_{i+1})$,
$(\th^*_{i-1}-\th^*_{i+2})/(\th^*_i - \th^*_{i+1})$
are equal and independent of $i$ for $1 \leq i \leq d-2$.
Write this common value as $\beta + 1$.
In the present paper we consider the ``end-entries''
$\th_0$, $\th_d$, $\th^*_0$, $\th^*_d$, $a_0$, $a_d$, $a^*_0$, $a^*_d$.
We prove that a Leonard system with diameter $d$ 
is determined up to isomorphism by its end-entries and $\beta$
if and only if either
(i) $\beta \neq \pm 2$ and $q^{d-1} \neq -1$,
where $\beta=q+q^{-1}$,
or
(ii) $\beta = \pm 2$ and $\text{Char}(\F) \neq 2$.
\end{quote}
}

\section{Introduction}

Throughout the paper $\F$ denotes an algebraically closed field.

We begin by recalling the notion of a Leonard pair.
We use the following terms.
A square matrix is said to be {\em tridiagonal} whenever each nonzero
entry lies on either the diagonal, the subdiagonal, or the superdiagonal.
A tridiagonal matrix is said to be {\em irreducible} whenever
each entry on the subdiagonal is nonzero and each entry on the superdiagonal is nonzero.

\begin{definition}  {\rm (See \cite[Definition 1.1]{T:Leonard}.)}    \label{def:LP}   \samepage
\ifDRAFT {\rm def:LP}. \fi
Let $V$ be a vector space over $\F$ with finite positive dimension.
By a {\em Leonard pair on $V$} we mean an ordered pair of linear transformations
$A : V \to V$ and $A^*: V \to V$ that satisfy (i) and (ii) below:
\begin{itemize}
\item[\rm (i)]
There exists a basis for $V$ with respect to which the matrix representing
$A$ is irreducible tridiagonal and the matrix representing $A^*$ is diagonal.
\item[\rm (ii)]
There exists a basis for $V$ with respect to which the matrix representing
$A^*$ is irreducible tridiagonal and the matrix representing $A$ is diagonal.
\end{itemize}
\end{definition}

\begin{note}    \samepage
According to a common notational convention, $A^*$ denotes the
conjugate transpose of $A$.
We are not using this convention.
In a Leonard pair $A,A^*$ the matrices $A$ and $A^*$ are arbitrary subject to
(i) and (ii) above.
\end{note}

We refer the reader to \cite{NT:balanced,T:Leonard,T:24points,T:array,T:survey}
for background on Leonard pairs.

For the rest of this section, fix an integer $d \geq 0$ and a vector space $V$
over $\F$ with dimension $d+1$.
Consider a Leonard pair $A,A^*$ on $V$.
By \cite[Lemma 1.3]{T:Leonard} each of $A,A^*$ has mutually distinct $d+1$ eigenvalues.
Let $\{\th_i\}_{i=0}^d$ (resp.\ $\{\th^*_i\}_{i=0}^d$) be an ordering of the eigenvalues of $A$ (resp.\ $A^*$),
and let $\{V_i\}_{i=0}^d$ (resp.\ $\{V^*_i\}_{i=0}^d$) be the corresponding eigenspaces.
For $0 \leq i \leq d$ define a linear transformation $E_i : V \to V$ such that
$(E_i - I) V_i = 0$ and $E_i V_j=0$ for $j \neq i$ $(0 \leq j \leq d)$.
Here $I$ denotes the identity.
We call $E_i$ the {\em primitive idempotent} of $A$ associated with $\th_i$.
The primitive idempotent $E^*_i$ of $A^*$ associated with $\th^*_i$
is similarly defined.
For $0 \leq i \leq d$ pick a nonzero $v_i \in V_i$.
We say the ordering $\{E_i\}_{i=0}^d$ is {\em standard} whenever
the basis $\{v_i\}_{i=0}^d$ satisfies Definition \ref{def:LP}(ii).
A standard ordering of the primitive idempotents of $A^*$ is
similarly defined.
For a standard ordering $\{E_i\}_{i=0}^d$, the ordering $\{E_{d-i}\}_{i=0}^d$
is also standard and no further ordering is standard.
Similar result applies to a standard ordering of the primitive idempotents
of $A^*$.

\begin{definition}   {\rm (See \cite[Definition 1.4]{T:Leonard}.) }    \label{def:LS}
\ifDRAFT {\rm def:LS}. \fi 
By a {\em Leonard system} on $V$ we mean a sequence
\[
 \Phi = (A, \{E_i\}_{i=0}^d, A^*, \{E^*_i\}_{i=0}^d),
\]
where $A,A^*$ is a Leonard pair on $V$, and 
$\{E_i\}_{i=0}^d$ (resp.\ $\{E^*_i\}_{i=0}^d$) is a standard ordering 
of the primitive idempotents of $A$ (resp.\ $A^*$).
We call $d$ the {\em diameter} of $\Phi$.
We say {\em $\Phi$ is over $\F$}.
\end{definition}

We recall the notion of an isomorphism of Leonard systems.
Consider a Leonard system $\Phi = (A, \{E_i\}_{i=0}^d, A^*, \{E^*_i\}_{i=0}^d)$
on $V$ and a Leonard system 
$\Phi' = (A', \{E'_i\}_{i=0}^d, A^{*\prime}, \{E^{* \prime}_i\}_{i=0}^d)$
on a vector space $V'$ with dimension $d+1$.
By an {\em isomorphism of Leonard systems from $\Phi$ to $\Phi'$} we mean
a linear bijection $\sigma : V \to V'$
such that $\sigma A = A' \sigma$, $\sigma A^* = A^{*\prime} \sigma$,
and $\sigma E_i = E'_i \sigma$, $\sigma E^*_i = E^{*\prime} \sigma$ for $0 \leq i \leq d$.
Leonard systems $\Phi$ and $\Phi'$ are said to be {\em isomorphic}
whenever there exists an isomorphism of Leonard systems from
$\Phi$ to $\Phi'$.

We recall the parameter array of a Leonard system.

\begin{definition}   {\rm (See \cite[Section 2]{T:array}, \cite[Theorem 4.6]{NT:formula}.) }
\label{def:parray}
\ifDRAFT {\rm def:parray}. \fi
Let $\Phi =  (A, \{E_i\}_{i=0}^d, A^*, \{E^*_i\}_{i=0}^d)$
be a Leonard system over $\F$.
By the {\em parameter array of $\Phi$} we mean the sequence
\begin{equation}            \label{eq:parray}
   (\{\th_i\}_{i=0}^d, \{\th^*_i\}_{i=0}^d, \{\vphi_i\}_{i=1}^d, \{\phi_i\}_{i=1}^d),
\end{equation}
where $\th_i$ is the eigenvalue of $A$ associated with $E_i$,
$\th^*_i$ is the eigenvalue of $A^*$ associate with $E^*_i$,
and
\begin{align*}
 \vphi_i &=  (\th^*_0 - \th^*_i)
       \frac{\text{tr}(E^*_0 \prod_{h=0}^{i-1}(A-\th_h I))}
               {\text{tr}(E^*_0\prod_{h=0}^{i-2}(A-\th_h I))},
\\
 \phi_i &= (\th^*_0 - \th^*_i)
       \frac{\text{tr}(E^*_0\prod_{h=0}^{i-1}(A-\theta_{d-h}I))}
              {\text{tr}(E^*_0\prod_{h=0}^{i-2}(A-\theta_{d-h}I))}, 
\end{align*}
where tr means trace.
In the above expressions, the denominators are nonzero by 
\cite[Corollary 4.5]{NT:formula}.
\end{definition}

\begin{lemma}    {\rm (See  \cite[Theorem 1.9]{T:Leonard}.) }
\label{lem:characterize}   \samepage
\ifDRAFT {\rm lem:characterize}. \fi
A Leonard system is determined up to isomorphism by its
parameter array.
\end{lemma}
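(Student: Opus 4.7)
The plan is to produce explicit matrix representations of $A$ and $A^*$ determined entirely by the parameter array, so that two Leonard systems sharing a parameter array admit literally equal matrix representations and are thus isomorphic. The tool is the split decomposition. For any Leonard system $\Phi$ there is a unique direct sum $V = U_0 \oplus U_1 \oplus \cdots \oplus U_d$ into one-dimensional subspaces satisfying $U_0 + \cdots + U_i = E^*_0 V + \cdots + E^*_i V$ and $U_i + \cdots + U_d = E_i V + \cdots + E_d V$ for $0 \leq i \leq d$; equivalently, $(A - \th_i I) U_i \subseteq U_{i+1}$ with $U_{d+1}=0$, and $(A^* - \th^*_i I) U_i \subseteq U_{i-1}$ with $U_{-1}=0$. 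Picking $0 \neq u_0 \in U_0$ and setting $u_{i+1} = (A - \th_i I) u_i$ produces a basis $\{u_i\}_{i=0}^d$ with $u_i \in U_i$. Relative to this basis, $A$ is lower bidiagonal with diagonal entries $\th_0, \ldots, \th_d$ and each subdiagonal entry equal to $1$, while $A^*$ is upper bidiagonal with diagonal entries $\th^*_0, \ldots, \th^*_d$ and superdiagonal entries $\vphi_1, \ldots, \vphi_d$ matching those of Definition \ref{def:parray}.

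Given Leonard systems $\Phi$ on $V$ and $\Phi'$ on $V'$ with the same parameter array, I would then build split bases $\{u_i\}$ of $V$ and $\{u'_i\}$ of $V'$ and define the linear bijection $\sigma : V \to V'$ by $\sigma u_i = u'_i$. Equality of the matrix representations immediately gives $\sigma A = A'\sigma$ and $\sigma A^* = A^{*\prime}\sigma$. Since each primitive idempotent $E_i$ (resp.\ $E^*_i$) is the Lagrange interpolation polynomial in $A$ (resp.\ $A^*$) singling out the eigenvalue $\th_i$ (resp.\ $\th^*_i$), the relations $\sigma E_i = E'_i \sigma$ and $\sigma E^*_i = E^{*\prime}_i \sigma$ follow, and $\sigma$ is an isomorphism of Leonard systems.

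The main obstacle is the identification of the superdiagonal entries of $A^*$ in the split basis with the trace-theoretic $\vphi_i$ of Definition \ref{def:parray}. This requires analyzing the action of $E^*_0$ on iterated products $\prod_{h=0}^{i-1}(A - \th_h I)$ evaluated at $u_0$; using $E^*_0 V = U_0$, the product $\prod_{h=0}^{i-1}(A - \th_h I) u_0$ lands in $U_i$, and recursively applying $A^* - \th^*_j I$ brings it back to $U_0$ picking up the factor $\vphi_1 \vphi_2 \cdots \vphi_i$. Matching this with the trace definition yields the desired identification, after which everything else is formal matrix bookkeeping.
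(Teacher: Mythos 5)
Your argument is correct and is essentially the standard proof: the paper itself gives no proof of Lemma \ref{lem:characterize}, citing \cite[Theorem~1.9]{T:Leonard}, and the proof there proceeds exactly as you propose, via the split decomposition and the resulting bidiagonal matrix forms (lower bidiagonal $A$ with subdiagonal entries $1$ and diagonal $\{\th_i\}$, upper bidiagonal $A^*$ with diagonal $\{\th^*_i\}$ and superdiagonal $\{\vphi_i\}$), with the idempotents handled as polynomials in $A$ and $A^*$. The one technical point you flag---reconciling the split-sequence entries with the trace formulas in Definition \ref{def:parray}---is precisely the content of \cite{NT:formula}, so your outline matches the cited treatment.
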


\begin{lemma}  {\rm (See \cite[Theorem 1.9]{T:Leonard}.) } \label{lem:classify}
\ifDRAFT {\rm lem:classify}. \fi
Consider a sequence \eqref{eq:parray} consisting of scalars taken from $\F$.
Then there exists a Leonard system over $\F$ with parameter array 
\eqref{eq:parray} if and only if {\rm (i)--(v)} hold below:
\begin{itemize}
\item[\rm (i)]  
$\th_i \neq \th_j$, $\;\; \th^*_i \neq \th^*_j\;\;$
    $\;\;(0 \leq i < j \leq d)$.
\item[\rm (ii)]
$\vphi_i \neq 0$, $\;\; \phi_i \neq 0\;\;$ $\;\; (1 \leq i \leq d)$.
\item[\rm (iii)]
$ \displaystyle
 \vphi_i = \phi_1 \sum_{\ell=0}^{i-1} 
                            \frac{\th_\ell - \th_{d-\ell}}
                                   {\th_0 - \th_d}
             + (\th^*_i - \th^*_0)(\th_{i-1} - \th_d)  \qquad (1 \leq i \leq d).
$
\item[\rm (iv)]
$ \displaystyle
 \phi_i = \vphi_1 \sum_{\ell=0}^{i-1} 
                            \frac{\th_\ell - \th_{d-\ell}}
                                   {\th_0 - \th_d}
              + (\th^*_i - \th^*_0)(\th_{d-i+1} - \th_0)  \qquad (1 \leq i \leq d).
$
\item[\rm (v)]
The expressions
\begin{equation}           \label{eq:indep}
   \frac{\th_{i-2} - \th_{i+1}}
          {\th_{i-1}-\th_{i}},
 \qquad\qquad
   \frac{\th^*_{i-2} - \th^*_{i+1}}
          {\th^*_{i-1} - \th^*_{i}}
\end{equation}
are equal and independent of $i$ for $2 \leq i \leq d-1$.
\end{itemize}
\end{lemma}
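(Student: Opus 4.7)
The plan is to handle each direction of the biconditional separately. For the forward direction, suppose $\Phi$ is a Leonard system with parameter array \eqref{eq:parray}. Condition (i) is the distinct-eigenvalue property of $A,A^*$ recalled in the text. Condition (ii) follows from \cite[Corollary 4.5]{NT:formula} for the denominators together with the analogous nonvanishing of the numerators in Definition \ref{def:parray}, which I would establish by a rank argument exploiting the fact that $E^*_0$ has rank one. Condition (v) is the constant-ratio property recorded in the abstract. For (iii) and (iv), I would derive closed-form expressions for $\vphi_i$ and $\phi_i$ in terms of $\{\th_i\}_{i=0}^d$, $\{\th^*_i\}_{i=0}^d$, and $\beta$ (available from (v)) and verify the sum identities by a direct computation.

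The substantive half is the converse. Given scalars satisfying (i)--(v), I would construct $\Phi$ via the split decomposition. Take $V = \F^{d+1}$ with standard basis $\{u_i\}_{i=0}^d$, and let $A$ be the lower bidiagonal matrix with diagonal $(\th_0, \ldots, \th_d)$ and all subdiagonal entries equal to $1$. Let $A^*$ be the upper bidiagonal matrix with diagonal $(\th^*_0, \ldots, \th^*_d)$ and superdiagonal $(\vphi_1, \ldots, \vphi_d)$. Condition (i) guarantees that $A$ and $A^*$ are diagonalizable with spectra $\{\th_i\}_{i=0}^d$ and $\{\th^*_i\}_{i=0}^d$ respectively. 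What remains is to verify the two conditions of Definition \ref{def:LP}, namely that each of $A, A^*$ acts in irreducible tridiagonal fashion on an eigenbasis of the other.

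The main obstacle is establishing the irreducible tridiagonal action. Since $A$ is lower bidiagonal, its eigenvectors are given by explicit polynomial combinations of the $u_i$, producing an upper triangular change-of-basis matrix $P$ whose columns form an eigenbasis of $A$. After expanding $P^{-1} A^* P$, showing that all entries above the superdiagonal vanish reduces to the identity (iii), while the nonvanishing of the sub- and superdiagonal entries is controlled by (ii) and (v). The dual claim, that $A$ acts irreducibly tridiagonally on an eigenbasis of $A^*$, is handled by a parallel construction using the $\phi_i$ and invoking (iv) in place of (iii). Finally, the orderings $\{E_i\}_{i=0}^d$ and $\{E^*_i\}_{i=0}^d$ arising from this construction are standard, because the explicit bidiagonal forms of $A$ and $A^*$ exhibit the very bases required by Definition \ref{def:LP}(i),(ii).
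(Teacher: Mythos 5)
The paper does not prove this lemma at all: it is imported verbatim from \cite[Theorem~1.9]{T:Leonard} (see also \cite{T:array}), so the benchmark is Terwilliger's proof, whose overall strategy (construct a candidate pair in the split, i.e.\ bidiagonal, form) your converse direction does echo. But as written your sketch has genuine gaps on both sides of the equivalence. In the forward direction, your plan to ``derive closed-form expressions for $\vphi_i$ and $\phi_i$ in terms of $\{\th_i\}_{i=0}^d$, $\{\th^*_i\}_{i=0}^d$, and $\beta$'' cannot work: the $\vphi_i$ are \emph{not} functions of the eigenvalue data. Conditions (iii), (iv) determine $\{\vphi_i\}$ and $\{\phi_i\}$ only once $\vphi_1$ (equivalently $\phi_1$) is also given, and $\vphi_1$ is a genuinely free parameter --- for fixed eigenvalue sequences satisfying (i), (v) there are in general infinitely many admissible values of $\vphi_1$, giving non-isomorphic Leonard systems with identical eigenvalues. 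So (iii) and (iv) cannot be verified by the computation you describe; in the cited source they are obtained from the split decomposition and trace identities, which is real work, not a ``direct computation'' from eigenvalues.

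In the converse direction the decisive step is asserted rather than proved. With $A$ lower bidiagonal its $\th_j$-eigenvector is supported on coordinates $i\ge j$, so the eigenvector matrix $P$ is triangular (not upper triangular as you claim, under your conventions), and by the triangular/bidiagonal shapes alone the entries of $P^{-1}A^*P$ on one side of the tridiagonal band vanish automatically; the substantive content is the vanishing on the \emph{other} side together with the nonvanishing of the sub- and superdiagonal entries, and this requires (iii), (iv), (v) jointly and an induction that constitutes the bulk of the proof in \cite{T:Leonard,T:array}. Saying it ``reduces to the identity (iii)'' names the wrong obstacle and supplies no argument for the right one. Your closing claim is also incorrect: in the split basis neither $A$ nor $A^*$ is diagonal, so that basis does not witness Definition~\ref{def:LP}(i),(ii), and standardness of the orderings $\{E_i\}$, $\{E^*_i\}$ must be checked against the eigenbases you construct. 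Finally, even after producing a Leonard pair you must still verify that the Leonard system obtained has parameter array \eqref{eq:parray}, i.e.\ that the scalars $\vphi_i$, $\phi_i$ defined by the trace formulas of Definition~\ref{def:parray} coincide with the prescribed ones; this step is absent from the proposal.
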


\begin{definition}   \label{def:parrayF}   \samepage
\ifDRAFT {\rm def:parrayF}. \fi
By a {\em parameter array over $\F$} we mean a sequence \eqref{eq:parray}
consisting of scalars taken from $\F$ that satisfy conditions (i)--(v)
in Lemma \ref{lem:classify}.
\end{definition}

\begin{definition}    \label{def:beta}    \samepage
\ifDRAFT {\rm def:beta}. \fi
Let 
$\Phi = (A, \{E_i\}_{i=0}^d, A^*, \{E^*_i\}_{i=0}^d)$
be a Leonard system over $\F$ with diameter $d \geq 3$.
Let \eqref{eq:parray} be the parameter array of $\Phi$.
By the {\em fundamental parameter} of $\Phi$ we mean
one less than the common value of \eqref{eq:indep}.
\end{definition}

Let $\Phi =  (A, \{E_i\}_{i=0}^d, A^*, \{E^*_i\}_{i=0}^d)$
be a Leonard system over $\F$
with parameter array \eqref{eq:parray}.
In \cite{N:endparam} we considered the {\em end-parameters}:
\[
 \th_0, \quad
 \th_d, \quad
 \th^*_0, \quad
 \th^*_d, \quad
 \vphi_1, \quad
 \vphi_d, \quad
 \phi_1, \quad
 \phi_d.
\]
We proved that a Leonard system with diameter $d \geq 3$ is determined 
up to isomorphism by its fundamental parameter and its end-parameters 
(see Lemma \ref{lem:endparam}).
In the present paper we consider another set of parameters.

\begin{definition} {\rm \cite[Definition 2.5]{T:Leonard} }   \label{def:ai}   \samepage
\ifDRAFT {\rm def:ai}. \fi
For $0 \leq i \leq d$ define
\[
  a_i = \text{\rm tr}(A E^*_i),   \qquad\qquad
  a^*_ i = \text{\rm tr}(A^* E_i).
\]
We call $\{a_i\}_{i=0}^d$ (resp.\ $\{a^*_i\}_{i=0}^d$) the
{\em principal sequence} (resp.\ {\em dual principal sequence}) of $\Phi$.
\end{definition}

The principal sequence and the dual principal sequence have 
the following geometric interpretation.
For $0 \leq i \leq d$ pick a nonzero $v_i \in E_i V$ and
a nonzero $v^*_i \in E^*_i V$.
Note that each of $\{v_i\}_{i=0}^d$ and $\{v^*_i\}_{i=0}^d$ is a basis for $V$.
As easily observed,
with respect to the basis $\{v_i\}_{i=0}^d$ the matrix representing $A$
has diagonal entries $\{\th_i\}_{i=0}^d$ and the matrix representing $A^*$
has diagonal entries $\{a^*_i\}_{i=0}^d$.
Similarly, with respect to the basis $\{v^*_i\}_{i=0}^d$ the matrix representing $A^*$
has diagonal entries $\{\th^*_i\}_{i=0}^d$ and the matrix representing $A$
has diagonal entries $\{a_i\}_{i=0}^d$. 

We now state our main results.
Let  
$\Phi = (A, \{E_i\}_{i=0}^d, A^*, \{E^*_i\}_{i=0}^d)$
be a Leonard system over $\F$ with parameter array \eqref{eq:parray}.
Let $\{a_i\}_{i=0}^d$ (resp. $\{a^*_i\}_{i=0}^d$) be the principal sequence
(resp.\ dual principal sequence) of $\Phi$.
We consider the {\em end-entries}:
\[
\th_0, \quad
\th_d, \quad
\th^*_0, \quad
\th^*_d, \quad
a_0, \quad
a_d, \quad
a^*_0, \quad
a^*_d.
\]
The end-entries are algebraically dependent: 

\begin{proposition}   \label{prop:restriction}    \samepage
\ifDRAFT {\rm prop:restriction}. \fi
Assume $d \geq 1$.
Then
\begin{equation}     \label{eq:restriction}
 \frac{(a_0-\th_0)(a_d-\th_d)}
        {(a_0-\th_d)(a_d-\th_0)}
= \frac{(a^*_0-\th^*_0)(a^*_d-\th^*_d)}
         {(a^*_0-\th^*_d)(a^*_d-\th^*_0)}.
\end{equation}  
In \eqref{eq:restriction}, all the denominators are nonzero.
Once we fix $\th_0$, $\th_d$, $\th^*_0$, $\th^*_d$, 
any one of $a_0$, $a_d$, $a^*_0$, $a^*_d$ is determined 
by the remaining three.
\end{proposition}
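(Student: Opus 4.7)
The plan is to express each factor on both sides of \eqref{eq:restriction} in terms of the parameter array of $\Phi$, and then to invoke the duality $\Phi\mapsto\Phi^*$ that swaps $A$ and $A^*$ to identify the two sides. The crucial ingredient is the following quartet of end-entry trace formulas for $\Phi$, which follow from a direct computation of $\mathrm{tr}(AE^*_0)$ and $\mathrm{tr}(AE^*_d)$ in the forward and reverse split bases of $V$ associated to $\Phi$:
\begin{align*}
a_0-\th_0 &= \frac{\vphi_1}{\th^*_0-\th^*_1}, & a_0-\th_d &= \frac{\phi_1}{\th^*_0-\th^*_1},\\
a_d-\th_d &= \frac{\vphi_d}{\th^*_d-\th^*_{d-1}}, & a_d-\th_0 &= \frac{\phi_d}{\th^*_d-\th^*_{d-1}}.
\end{align*}
Equivalently, the bottom row follows from the top row by applying the relabeling $\{E_i\}\mapsto\{E_{d-i}\}$, under which the parameter array transforms by $\th_i\leftrightarrow\th_{d-i}$ and $\vphi_i\leftrightarrow\phi_i$ while leaving each $a_i$ fixed.

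Substituting into the left side of \eqref{eq:restriction}, the factors $\th^*_0-\th^*_1$ and $\th^*_d-\th^*_{d-1}$ cancel and yield $\vphi_1\vphi_d/(\phi_1\phi_d)$. The right side of \eqref{eq:restriction} is the analogous expression for the dual Leonard system $\Phi^*=(A^*,\{E^*_i\}_{i=0}^d,A,\{E_i\}_{i=0}^d)$, whose parameter array $(\{\th^*_i\},\{\th_i\},\{\vphi^*_i\},\{\phi^*_i\})$ satisfies the standard rules $\vphi^*_i=\vphi_i$ and $\phi^*_i=\phi_{d-i+1}$. The same computation applied to $\Phi^*$ therefore gives $\vphi^*_1\vphi^*_d/(\phi^*_1\phi^*_d)=\vphi_1\vphi_d/(\phi_d\phi_1)$, matching the left side. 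This establishes \eqref{eq:restriction}.

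The nonvanishing of the four denominators in \eqref{eq:restriction} is immediate from the end-entry formulas applied to $\Phi$ and $\Phi^*$: each denominator equals a nonzero $\phi$-parameter, nonzero by Lemma \ref{lem:classify}(ii), divided by a nonzero eigenvalue gap, nonzero by Lemma \ref{lem:classify}(i). For the last assertion, fix $\th_0,\th_d,\th^*_0,\th^*_d$ together with any three of $a_0,a_d,a^*_0,a^*_d$, and suppose we wish to recover the fourth, say $a^*_d$. Equation \eqref{eq:restriction} then reduces to a Möbius relation $(a^*_d-\th^*_d)/(a^*_d-\th^*_0)=\lambda$ for a scalar $\lambda\in\F$ determined by the other data, and is uniquely solved by $a^*_d=(\th^*_d-\lambda\th^*_0)/(1-\lambda)$ provided $\lambda\ne 1$; but $\lambda=1$ would force $\th^*_0=\th^*_d$, contradicting Lemma \ref{lem:classify}(i). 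The main obstacle I anticipate is correctly deriving the four end-entry trace formulas and verifying the duality action on $(\vphi_i,\phi_i)$; once this bookkeeping is in place, the remainder is a short algebraic manipulation.
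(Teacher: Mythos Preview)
Your derivation of \eqref{eq:restriction} and the nonvanishing of the denominators matches the paper's exactly: both sides are reduced to $\vphi_1\vphi_d/(\phi_1\phi_d)$ via the end-entry formulas of Lemma~\ref{lem:a0th0} (the paper packages the ratios as Lemma~\ref{lem:a0th0/a0thd}), and nonvanishing comes from Lemma~\ref{lem:classify}(i),(ii).

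For the final uniqueness assertion, your route is genuinely different and cleaner. The paper clears denominators to obtain the bilinear relation
\[
(a_0-\th_0)(a_d-\th_d)(a^*_0-\th^*_d)(a^*_d-\th^*_0)-(a_0-\th_d)(a_d-\th_0)(a^*_0-\th^*_0)(a^*_d-\th^*_d)=0,
\]
regards it as linear in (say) $a_0$, and then explicitly evaluates the coefficient of $a_0$ via Lemma~\ref{lem:key} to the closed form
\[
\frac{\vphi_d\phi_d(\th_0-\th_d)(\th^*_0-\th^*_d)^2(\phi_1-\vphi_1)}{(\phi_1-\vphi_d)(\phi_d-\vphi_1)(\phi_d-\vphi_d)},
\]
which is nonzero by Lemmas~\ref{lem:classify} and~\ref{lem:vphi-phi}; the $D_4$ action handles the other three variables. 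Your M\"obius argument bypasses this computation entirely: since the actual $a^*_d$ of $\Phi$ solves $(a^*_d-\th^*_d)/(a^*_d-\th^*_0)=\lambda$, the case $\lambda=1$ would force $\th^*_0=\th^*_d$, which is excluded, so $\lambda\neq 1$ and the solution is unique. The paper's approach buys an explicit formula for the linear coefficient (and hence, implicitly, for the recovered variable) in terms of the parameter array; yours buys brevity and avoids any dependence on Lemmas~\ref{lem:key} and~\ref{lem:vphi-phi}.
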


For the rest of this section, assume $d \geq 3$.

\begin{theorem}   \label{thm:main1}   \samepage
\ifDRAFT {\rm thm:main1}. \fi
Assume $\vphi_1 + \vphi_d \neq \phi_1 + \phi_d$.
Let $\Phi'$ be a Leonard system over $\F$ with diameter $d$ 
that has the same fundamental parameter and the same end-entries as $\Phi$.
Then $\Phi'$ is isomorphic to $\Phi$.
\end{theorem}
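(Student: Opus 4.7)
The plan is to invoke Lemma~\ref{lem:endparam} and thereby reduce to showing that $\Phi$ and $\Phi'$ share the same end-parameters $\theta_0,\theta_d,\theta^*_0,\theta^*_d,\varphi_1,\varphi_d,\phi_1,\phi_d$. The first four coincide by hypothesis, so only $\varphi_1,\varphi_d,\phi_1,\phi_d$ remain to be pinned down. As a preliminary step, since $\beta,\theta_0,\theta_d$ are common and condition (v) of Lemma~\ref{lem:classify} (with common value $\beta+1$) is a recurrence producing $\theta_1,\ldots,\theta_{d-1}$ from $\theta_0,\theta_d$, the entire sequence $\{\theta_i\}_{i=0}^{d}$ agrees for $\Phi$ and $\Phi'$; the analogous argument gives agreement of $\{\theta^*_i\}_{i=0}^{d}$.

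I next extract two linear relations among the four remaining parameters by specializing Lemma~\ref{lem:classify}(iii)--(iv) to $i=d$. Using the telescoping identity $\sum_{\ell=0}^{d-1}(\theta_\ell-\theta_{d-\ell})=\theta_0-\theta_d$, these specializations collapse to
\[
\varphi_d-\phi_1=(\theta^*_d-\theta^*_0)(\theta_{d-1}-\theta_d),\qquad \phi_d-\varphi_1=(\theta^*_d-\theta^*_0)(\theta_1-\theta_0),
\]
whose right-hand sides are already common to $\Phi,\Phi'$. Adding the two (and using $\theta^*_0\neq\theta^*_d$) shows the hypothesis $\varphi_1+\varphi_d\neq\phi_1+\phi_d$ is equivalent to $\theta_0+\theta_{d-1}\neq\theta_1+\theta_d$, a condition purely on the $\theta$-sequence. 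I use these two relations to eliminate $\phi_1,\phi_d$ in terms of $\varphi_1,\varphi_d$.

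To determine $\varphi_1,\varphi_d$ I then express each end-entry $a_0,a_d,a^*_0,a^*_d$ as an explicit affine function of $\varphi_1,\varphi_d,\phi_1,\phi_d$ whose coefficients depend only on the (now-fixed) $\{\theta_i\}$ and $\{\theta^*_i\}$; such formulas follow from the standard tridiagonal representation of $A$ on an eigenbasis of $A^*$ and vice versa. After substituting the two displayed relations, the four end-entry equalities between $\Phi$ and $\Phi'$ become four linear equations in the differences $(\varphi_1-\varphi_1',\varphi_d-\varphi_d')$, of which one is automatically redundant by Proposition~\ref{prop:restriction}. The main obstacle is to isolate a pair of the remaining equations whose $2\times 2$ coefficient matrix has determinant a nonzero multiple of $\theta_0+\theta_{d-1}-\theta_1-\theta_d$; the rephrased hypothesis then forces this determinant to be nonzero, yielding $\varphi_1=\varphi_1'$, $\varphi_d=\varphi_d'$, and consequently $\phi_1=\phi_1'$, $\phi_d=\phi_d'$. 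An application of Lemma~\ref{lem:endparam} then concludes the proof.
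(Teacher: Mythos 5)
Your overall strategy (reduce via Lemma \ref{lem:endparam} to showing that $\vphi_1,\vphi_d,\phi_1,\phi_d$ are forced by $\beta$ and the end-entries) is the same as the paper's, but your execution has a fatal gap at the very first step. You claim that condition (v) of Lemma \ref{lem:classify} is a recurrence producing $\th_1,\dots,\th_{d-1}$ from $\th_0,\th_d$ and $\beta$, so that the whole sequences $\{\th_i\}$ and $\{\th^*_i\}$ must agree for $\Phi$ and $\Phi'$. This is false: the relation $\th_{i-2}-\th_{i+1}=(\beta+1)(\th_{i-1}-\th_i)$ has a three-dimensional solution space (e.g.\ $\th_i=c_1+c_2q^i+c_3q^{-i}$ when $\beta=q+q^{-1}\neq\pm2$, and analogous three-parameter families when $\beta=\pm2$), so prescribing only the two boundary values $\th_0,\th_d$ leaves a one-parameter family of admissible eigenvalue sequences. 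Indeed, the paper's construction for Theorem \ref{thm:main2} produces Leonard systems whose interior eigenvalues $\tth_i(\zeta)$, $0<i<d$, genuinely vary with $\zeta$ while $\th_0,\th_d,\th^*_0,\th^*_d$ and $\beta$ stay fixed — so the intermediate eigenvalues simply are not determined by the data you start from. Everything downstream collapses with this step: the right-hand sides of your two relations from Lemma \ref{lem:classify}(iii),(iv) at $i=d$ involve $\th_1$ and $\th_{d-1}$, which are not known to be common to $\Phi$ and $\Phi'$; likewise the coefficients of your affine expressions for $a_0,a_d,a^*_0,a^*_d$ involve $\th_1,\th_{d-1},\th^*_1,\th^*_{d-1}$ (cf.\ Lemma \ref{lem:a0th0}), which are not fixed. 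The argument is in fact circular: by Lemma \ref{lem:th1ths1}, $\th_1$ and $\th_{d-1}$ are recovered from $\vphi_1,\vphi_d,\phi_1,\phi_d$ — exactly the quantities you are trying to determine. In addition, your final step ("isolate a pair of equations whose $2\times2$ determinant is a nonzero multiple of $\th_0+\th_{d-1}-\th_1-\th_d$") is only announced, not carried out.

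The paper avoids this trap by never trying to fix the interior eigenvalues first: it forms the quantities $\Delta$ and $\Gamma_1,\dots,\Gamma_4$, which are built purely from the end-entries, shows $\Omega\Gamma_1/\vphi_1=\cdots=\Omega\Gamma_4/\phi_d=-\Delta$ (Lemma \ref{lem:DeltaGamma}), notes $\Delta\neq0$ exactly under the hypothesis $\vphi_1+\vphi_d\neq\phi_1+\phi_d$ (Lemma \ref{lem:Delta0}), and solves for $\vphi_1,\vphi_d,\phi_1,\phi_d$ in terms of the end-entries and $\Omega$, where $\Omega$ is a function of $\beta$ alone (Lemma \ref{lem:Omega}); only then is Lemma \ref{lem:endparam} applied. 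To repair your proof you would need to replace your preliminary step by some mechanism of this kind that determines the four remaining end-parameters directly from the end-entries and $\beta$, without presupposing knowledge of $\th_1,\th_{d-1},\th^*_1,\th^*_{d-1}$.
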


In Appendix A, we display formulas that represent the parameter array
in terms of the fundamental parameter and  the end-entries.

\begin{theorem}  \label{thm:main2}     \samepage
\ifDRAFT {\rm thm:main2}. \fi
Assume $\vphi_1 + \vphi_d = \phi_1 + \phi_d$.
Then there exist infinitely many mutually non-isomorphic Leonard systems over $\F$
with diameter $d$
that have the same fundamental parameter and the same end-entries as $\Phi$.
\end{theorem}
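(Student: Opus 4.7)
By Lemma \ref{lem:endparam}, a Leonard system of diameter $d\geq 3$ is determined up to isomorphism by its fundamental parameter together with its end-parameters $\th_0,\th_d,\th^*_0,\th^*_d,\vphi_1,\vphi_d,\phi_1,\phi_d$. Hence the plan is to exhibit, for infinitely many $t\in\F$, distinct choices of end-parameters $(\vphi_1(t),\vphi_d(t),\phi_1(t),\phi_d(t))$ which, together with the fixed $\beta$ and $\th_0,\th_d,\th^*_0,\th^*_d$, yield a Leonard system whose end-entries coincide with those of $\Phi$. The main input is that the end-entries $a_0,a_d,a^*_0,a^*_d$ are rational functions of the end-parameters: specializing Lemma \ref{lem:classify}(iii),(iv) at $i=1$ and $i=d$, together with the identity $\sum_{\ell=0}^{d-1}(\th_\ell-\th_{d-\ell})/(\th_0-\th_d)=1$, yields
\[
\vphi_d-\phi_1=(\th^*_d-\th^*_0)(\th_{d-1}-\th_d),\qquad \phi_d-\vphi_1=(\th^*_d-\th^*_0)(\th_1-\th_0),
\]
together with their duals for $\th^*_1,\th^*_{d-1}$. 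Combined with the recurrence in Lemma \ref{lem:classify}(v) this recovers the full $\th$ and $\th^*$ sequences from the end-parameters, and then the standard tridiagonal representations of $A,A^*$ express $a_0,a_d,a^*_0,a^*_d$ as explicit rational functions of $\vphi_1,\vphi_d,\phi_1,\phi_d$.

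Setting these four expressions equal to the prescribed end-entries gives an affine system in the four unknowns $\vphi_1',\vphi_d',\phi_1',\phi_d'$. By Proposition \ref{prop:restriction} one equation is implied by the other three, so the effective rank is at most three. The decisive step is to establish a further rank drop precisely under the hypothesis $\vphi_1+\vphi_d=\phi_1+\phi_d$. The key identity, obtained by subtracting the two displayed relations above, is
\[
(\vphi_1+\vphi_d)-(\phi_1+\phi_d)=(\th^*_d-\th^*_0)\bigl((\th_0-\th_1)-(\th_d-\th_{d-1})\bigr),
\]
which identifies the hypothesis as a symmetry of the $\th$-sequence at its endpoints. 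This symmetry will be shown to force a linear dependency among the end-entry equations, reducing the effective rank from three to two and producing a one-parameter family of candidate end-parameters $(\vphi_1(t),\vphi_d(t),\phi_1(t),\phi_d(t))$, $t\in\F$.

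It remains to check that for all but finitely many $t\in\F$ the resulting candidate parameter array satisfies conditions (i)--(v) of Lemma \ref{lem:classify}: condition (v) is automatic from $\beta$ and the fixed end $\th$'s, while distinctness of the $\th_i$ and $\th^*_i$ together with nonvanishing of all $\vphi_i,\phi_i$ are open conditions excluding only finitely many $t$. Each admissible $t$ then produces, via Lemma \ref{lem:endparam}, a Leonard system with the prescribed fundamental parameter and end-entries, and distinct end-parameter sequences yield non-isomorphic Leonard systems by Lemma \ref{lem:characterize}; since $\F$ is infinite this gives infinitely many mutually non-isomorphic Leonard systems, as required. The principal obstacle is the rank-drop calculation in the second paragraph: an explicit linear-algebra verification in four variables combining the end-entry formulas with Proposition \ref{prop:restriction}, which also dictates the precise parameterization of the one-parameter family.
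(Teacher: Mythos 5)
The decisive gap is that you never construct the Leonard systems. Lemma \ref{lem:endparam} is a uniqueness statement only: it says a Leonard system is determined by its fundamental parameter and end-parameters, not that every choice of such data is realized by one. So after producing candidate end-parameters $(\vphi_1(t),\vphi_d(t),\phi_1(t),\phi_d(t))$ you must still exhibit, for each $t$, a full parameter array satisfying Lemma \ref{lem:classify}(i)--(v). Conditions (iii), (iv) are $2d$ exact identities, not open conditions, and the inner eigenvalues necessarily move with $t$ (since $\th_0-\th_1=(\phi_d(t)-\vphi_1(t))/(\th^*_0-\th^*_d)$ changes), so it is not automatic that the $\th$-sequence generated from $\th_0,\th_1(t)$ by the recurrence in (v) terminates at the prescribed $\th_d$; that consistency, and the identities (iii), (iv), are exactly where the hypothesis $\vphi_1+\vphi_d=\phi_1+\phi_d$ is used. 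This is the bulk of the paper's proof: Proposition \ref{prop:Delta0} splits the hypothesis into type I with $q^{d-1}=-1$ and type IV ($d=3$, $\text{\rm Char}(\F)=2$), and Sections \ref{sec:proof2(i)}, \ref{sec:proof2(ii)} write down explicit arrays $\tilde{p}(\zeta)$ and verify (iii), (iv) and the matching of end-entries (Lemmas \ref{lem:thcoincide}, \ref{lem:typeItvphitphi}, \ref{lem:typeIa0ad}, \ref{lem:typeIVtvphitphi}) using $q^{d-1}=-1$, respectively the type IV structure, together with the $\Delta=0$ relations of Lemma \ref{lem:Delta0relations}. Your sketch omits the case split and all of these verifications, and explicitly defers its own ``decisive step'' (``will be shown''), so the proof is not actually carried out.

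Moreover, the rank-drop framing is structurally off. By Lemma \ref{lem:key2} the end-entries are homogeneous of degree zero in $(\vphi_1,\vphi_d,\phi_1,\phi_d)$, so the scaling family $t\cdot(\vphi_1,\vphi_d,\phi_1,\phi_d)$ reproduces the end-entries for \emph{every} Leonard system, whether or not $\vphi_1+\vphi_d=\phi_1+\phi_d$; no additional dependency among your four equations appears or is needed (and in your own count, rank three in four unknowns already gives a one-parameter solution family, so a drop to rank two would give two parameters, not one). What the hypothesis actually controls is a different constraint: by Lemma \ref{lem:Omega}, $\phi_1+\phi_d-\vphi_1-\vphi_d=\Omega(\th_0-\th_d)(\th^*_0-\th^*_d)$ with $\Omega$ determined by $\beta$ and the type. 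When $\Omega\neq 0$ this pins down the scale (which is why Theorem \ref{thm:main1} holds); only when $\Omega=0$ does the scaling freedom survive, and even then each scaled choice must be shown to extend to a genuine parameter array --- the construction your proposal leaves out.
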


In our proof of Theorem \ref{thm:main2}, we 
construct infinitely many Leonard systems that has the same fundamental
parameter and the same end-entries as $\Phi$.
The condition $\vphi_1 + \vphi_d = \phi_1 + \phi_d$ is interpreted in terms of 
the fundamental parameter as follows:

\begin{proposition}    \label{prop:Delta0}    \samepage
\ifDRAFT {\rm prop:Delta0}. \fi
Let $\beta$ be the fundamental parameter of $\Phi$, and
pick a nonzero $q \in \F$ such that $\beta = q+q^{-1}$.
Then $\vphi_1+\vphi_d = \phi_1+\phi_d$ if and only if one of the following {\rm (i), (ii)}
holds:
\begin{itemize}
\item[\rm (i)]
$\beta \neq \pm 2$ and $q^{d-1} = -1$.
\item[\rm (ii)]
$\beta = 0$ and $\text{\rm Char}(\F)=2$.
\end{itemize}
\end{proposition}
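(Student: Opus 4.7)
The plan is a two-step reduction followed by a case analysis in $\beta$. First, I reduce the condition $\vphi_1+\vphi_d = \phi_1+\phi_d$ to a condition on the sequence $\{\th_i\}$ alone. Set $\delta_i = \th_i - \th_{i-1}$ for $1\le i\le d$. The sum $\sum_{\ell=0}^{i-1}(\th_\ell-\th_{d-\ell})/(\th_0-\th_d)$ appearing in Lemma~\ref{lem:classify}(iii), (iv) telescopes to $1$ at both $i=1$ and $i=d$. Applying those two formulas at $i=1$ and $i=d$ and using the $i=1$ relation $\vphi_1-\phi_1=(\th^*_1-\th^*_0)(\th_0-\th_d)$ to cancel the symmetric contributions of $\th^*_1-\th^*_0$, I obtain
\[
 \vphi_1+\vphi_d-\phi_1-\phi_d \;=\; -(\th^*_d-\th^*_0)(\delta_1+\delta_d).
\]
Since $\th^*_d\ne\th^*_0$ by Lemma~\ref{lem:classify}(i), the claim reduces to analyzing when $\delta_1+\delta_d=0$.

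Next, Lemma~\ref{lem:classify}(v) forces the linear recurrence $\delta_{i+2}-\beta\delta_{i+1}+\delta_i=0$, whose characteristic polynomial $x^2-\beta x+1$ has roots $q,q^{-1}$. When $\beta\ne\pm 2$ the roots are distinct and $\delta_i=Aq^i+Bq^{-i}$ for some $A,B\in\F$; a direct factorization gives $\delta_1+\delta_d=(q^{d-1}+1)(Aq+Bq^{-d})$. I then rule out the vanishing of the second factor for a genuine Leonard system: the substitution $A=-Bq^{-d-1}$ would force the antisymmetry $\delta_{d+1-i}=-\delta_i$, and pairing terms of $\th_d-\th_0=\sum_{i=1}^d\delta_i$ would yield $\th_d=\th_0$ for $d$ even and $\delta_{(d+1)/2}=0$ for $d$ odd, each contradicting Lemma~\ref{lem:classify}(i). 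Hence $\delta_1+\delta_d=0$ iff $q^{d-1}=-1$, establishing case~(i).

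In the remaining cases $\beta=\pm 2$, the polynomial $x^2-\beta x+1$ has a double root $q=\pm 1$ and $\delta_i$ takes the polynomial form $A+Bi$ (for $\beta=2$) or $(-1)^i(A+Bi)$ (for $\beta=-2$). Summing and using distinctness of $\{\th_i\}$, one relates $\th_d-\th_0=\sum_i\delta_i$ to $\delta_1+\delta_d$ through an explicit scalar factor that is a nonzero element of $\F$ when $\text{\rm Char}(\F)\ne 2$; distinctness then forces $\delta_1+\delta_d\ne 0$. When $\text{\rm Char}(\F)=2$ and $\beta=0$, the recurrence collapses to $\delta_{i+2}=\delta_i$; the two-periodic structure, combined with the distinctness of $\{\th_i\}$ (which restricts $d$ to be odd), yields $\delta_d=\delta_1$ and hence $\delta_1+\delta_d=2\delta_1=0$, establishing case~(ii).

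I expect the main obstacle to be the antisymmetry-and-distinctness argument that rules out the parasitic factor $Aq+Bq^{-d}$ in the first case, since it must handle both parities of $d$ uniformly (the even case via collapse of the sum, the odd case via forced vanishing of the middle difference $\delta_{(d+1)/2}$). The $\beta=\pm 2$ analyses, by contrast, reduce to direct computations once the correct polynomial form of $\delta_i$ is in hand.
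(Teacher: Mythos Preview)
Your approach is correct and takes a genuinely more self-contained route than the paper. The paper's proof is essentially a citation: it packages $\phi_1+\phi_d-\vphi_1-\vphi_d$ into the normalized scalar $\Omega$ of \eqref{eq:defOmega}, imports the closed formula for $\Omega$ by type from \cite{N:endparam} (Lemma~\ref{lem:Omega}), and reads off when $\Omega=0$. You instead derive the identity $\vphi_1+\vphi_d-\phi_1-\phi_d=-(\th^*_d-\th^*_0)(\delta_1+\delta_d)$ directly from Lemma~\ref{lem:classify}(iii),(iv) at $i=d$, and then solve the second-order recurrence for $\delta_i$ case by case. This is precisely the computation that underlies the quoted $\Omega$-table, so the two arguments have the same mathematical content; yours simply does not defer to the external reference. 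Your handling of the parasitic factor $Aq+Bq^{-d}$ in the $\beta\neq\pm2$ case is clean (note that for odd $d$ the formula $\delta_i=B(q^{-i}-q^{i-d-1})$ gives $\delta_{(d+1)/2}=0$ outright, so the middle-term argument works in every characteristic).

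One point needs tightening. In the $\beta=-2$, $d$ even case (Type~III$^+$) your ``explicit scalar factor'' relating $\delta_1+\delta_d$ to $\th_d-\th_0$ is $2(d-1)/d$, and its nonvanishing requires $d-1\ne 0$ in $\F$. Distinctness of $\{\th_i\}$ alone forces only $\text{Char}(\F)\nmid k$ for $1\le k\le d/2$ (Lemma~\ref{lem:types}(iii)); it does not by itself exclude $\text{Char}(\F)\mid d-1$ (think $d=4$, $\text{Char}(\F)=3$). The paper handles this implicitly by citing the $\Omega$-formula from \cite{N:endparam}; in your write-up you should either invoke that same source for the nonvanishing, or supply the additional argument that no Type~III$^+$ Leonard system exists with $d-1=0$ in $\F$.
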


By Theorems \ref{thm:main1}, \ref{thm:main2} and Proposition \ref{prop:Delta0}
we obtain:

\begin{corollary}  \label{cor:main}   \samepage
\ifDRAFT {\rm cor:main}. \fi
A Leonard system with diameter $d$ is determined up to isomorphism by its
fundamental parameter $\beta$ and its end-entries if and only if
one of the following {\rm (i)}, {\rm (ii)} holds:
\begin{itemize}
\item[\rm (i)]
$\beta \neq \pm 2$ and $q^{d-1} \neq -1$, where $\beta = q+q^{-1}$.
\item[\rm (ii)]
$\beta = \pm 2$ and $\text{\rm Char}(\F) \neq 2$.
\end{itemize}
\end{corollary}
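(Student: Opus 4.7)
The plan is to combine Theorems \ref{thm:main1} and \ref{thm:main2} with Proposition \ref{prop:Delta0}, and then translate the resulting condition on $\vphi_1+\vphi_d$ versus $\phi_1+\phi_d$ into the parametric condition stated in the corollary. First I would observe that Theorems \ref{thm:main1} and \ref{thm:main2}, taken together, yield the following dichotomy: $\Phi$ is determined up to isomorphism by its fundamental parameter and its end-entries if and only if $\vphi_1+\vphi_d \neq \phi_1+\phi_d$. Indeed, Theorem \ref{thm:main1} gives the forward direction under that inequality, while Theorem \ref{thm:main2} exhibits (infinitely many) non-isomorphic counterexamples when equality holds. So the proof reduces to checking that the inequality $\vphi_1+\vphi_d \neq \phi_1+\phi_d$ is equivalent to the disjunction of (i) and (ii) in the corollary.

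Next I would invoke Proposition \ref{prop:Delta0} and negate it: $\vphi_1+\vphi_d \neq \phi_1+\phi_d$ if and only if the negation of $\bigl((\beta \neq \pm 2 \text{ and } q^{d-1}=-1) \text{ or } (\beta = 0 \text{ and } \text{Char}(\F)=2)\bigr)$ holds. The task is purely logical: show this negation is equivalent to $\bigl((\beta \neq \pm 2 \text{ and } q^{d-1}\neq -1) \text{ or } (\beta = \pm 2 \text{ and } \text{Char}(\F)\neq 2)\bigr)$.

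I would handle this by a case analysis on $\text{Char}(\F)$. If $\text{Char}(\F) \neq 2$, then condition (ii) of Proposition \ref{prop:Delta0} cannot hold, so the negation of the proposition reads $\beta = \pm 2$ or ($\beta \neq \pm 2$ and $q^{d-1}\neq -1)$. Splitting on whether $\beta = \pm 2$ or not matches precisely (ii) or (i) of the corollary. If $\text{Char}(\F) = 2$, then $\pm 2 = 0$ in $\F$, so the conditions $\beta = \pm 2$ and $\beta = 0$ coincide, and condition (ii) of the corollary cannot hold. A short boolean computation shows the negation of Proposition \ref{prop:Delta0} becomes $\beta \neq \pm 2$ and $q^{d-1} \neq -1$, which is exactly (i) of the corollary. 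In both cases the two formulations agree.

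This is essentially a bookkeeping argument: the conceptual content lies in the three cited results, and the only mildly delicate point is the overlap between the loci $\beta = \pm 2$ and $\beta = 0$ in characteristic $2$, which collapses condition (ii) of the corollary into condition (i) of Proposition \ref{prop:Delta0}. I do not anticipate any genuine obstacle; no further ingredients beyond Theorems \ref{thm:main1}, \ref{thm:main2} and Proposition \ref{prop:Delta0} are needed.
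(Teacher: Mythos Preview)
Your proposal is correct and follows exactly the route the paper intends: the paper simply states that Corollary~\ref{cor:main} follows from Theorems~\ref{thm:main1}, \ref{thm:main2} and Proposition~\ref{prop:Delta0}, and you have supplied the (elementary) logical unwinding of that implication. Your case analysis on $\text{Char}(\F)$, including the observation that $\beta=\pm 2$ collapses to $\beta=0$ in characteristic~$2$, is the right way to see that the negation of the disjunction in Proposition~\ref{prop:Delta0} coincides with the disjunction in the corollary.
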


The paper is organized as follows.
In Section \ref{sec:D4} we recall the action of the dihedral group $D_4$
on the set of all Leonard systems.
In Section \ref{sec:endentries} we collect some formulas concerning
end-entries.
In Section \ref{sec:restriction} we prove Proposition \ref{prop:restriction}.
In Section \ref{sec:types} we recall the notion of the type of a Leonard system.
In Section \ref{sec:endparam} we recall some results from \cite{N:endparam},
and prove Proposition \ref{prop:Delta0}.
In Section \ref{sec:proof1} we prove Theorem \ref{thm:main1}.
In Section \ref{sec:Delta0} we prove a lemma for later use.
In Sections \ref{sec:proof2(i)} and \ref{sec:proof2(ii)} we prove Theorem \ref{thm:main2}.

\section{The $D_4$ action}
\label{sec:D4}

For a Leonard system 
$\Phi = (A, \{E_i\}_{i=0}^d, A^*, \{E^*_i\}_{i=0}^d)$ over $\F$,
each of the following is a Leonard system over $\F$:
\begin{align*}
\Phi^{*}  &:= (A^*, \{E^*_i\}_{i=0}^d, A, \{E_i\}_{i=0}^d), 
\\
\Phi^{\downarrow} &:= (A, \{E_i\}_{i=0}^d, A^*, \{E^*_{d-i}\}_{i=0}^d),
\\
\Phi^{\Downarrow} &:= (A, \{E_{d-i}\}_{i=0}^d, A^*, \{E^*_{i}\}_{i=0}^d).
\end{align*}
Viewing $*$, $\downarrow$, $\Downarrow$ as permutations on the set of
all Leonard systems,
\begin{equation}    \label{eq:relation}
*^2 =\,\, \downarrow^2 \,\, = \,\, \Downarrow^2 \, = 1,   \qquad
\Downarrow * \, = \, * \downarrow,  \qquad
\downarrow * \, = \, * \Downarrow,  \qquad
\downarrow \Downarrow \,\, = \,\, \Downarrow \downarrow.
\end{equation}
The group generated by symbols $*$, $\downarrow$, $\Downarrow$ subject
to the relations \eqref{eq:relation} is the dihedral group $D_4$. 
We recall $D_4$ is the group of symmetries of a square, and has $8$ elements.
For an element $g \in D_4$, and for an object $f$ associated with $\Phi$, 
let $f^g$ denote the corresponding object associated with $\Phi^{g^{-1}}$.
The $D_4$ action affects the parameter array as follows:

\begin{lemma}   {\rm (See \cite[Theorem 1.11]{T:Leonard}). }  \label{lem:D4}  \samepage
\ifDRAFT {\rm lem:D4}. \fi
Let $\Phi$ be a Leonard system over $\F$ with parameter array 
$(\{\th_i\}_{i=0}^d, \{\th^*_i\}_{i=0}^d, \{\vphi_i\}_{i=1}^d, \{\phi_i\}_{i=1}^d)$.
Then for $g \in \{\downarrow, \Downarrow, *\}$ the scalars
$\th_i^g$, ${\th^*_i}^g$, $\vphi^g_i$, $\phi^g_i$
are as follows:
\[
\begin{array}{c|cccc}
 g \; & \quad \th^g_i & \quad {\th^*_i}^g & \quad  \vphi^g_i & \; \phi^g_i
\\ \hline
 \downarrow \; & \quad \th_i & \quad  \th^*_{d-i} &  \quad  \phi_{d-i+1} & \;  \vphi_{d-i+1}
 \rule{0mm}{4mm}
\\
 \Downarrow \;  &\quad \th_{d-i} & \quad  \th^*_i &  \quad  \phi_i & \;  \vphi_i
\\
 * \; & \quad \th^*_i &  \quad \th_i &  \quad  \vphi_i &  \; \phi_{d-i+1} 
\end{array}
\] 
\end{lemma}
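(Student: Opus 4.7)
The plan is to verify the table entry by entry, for each generator $g \in \{\downarrow, \Downarrow, *\}$, working from easier to harder cases.

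First I would confirm that $\Phi^*$, $\Phi^\downarrow$, and $\Phi^\Downarrow$ really are Leonard systems over $\F$. For $\Phi^*$ this is immediate from the symmetric form of Definition \ref{def:LP}, whose conditions (i), (ii) are interchanged under $A \leftrightarrow A^*$. For $\Phi^\downarrow$ and $\Phi^\Downarrow$ this uses the remark in the excerpt that reversing a standard ordering of primitive idempotents yields a standard ordering.

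Second I would dispatch the eigenvalue columns $\th_i^g$ and $(\th^*_i)^g$. These follow directly from the definition of the eigenvalue associated with a primitive idempotent together with how the orderings of the $E_i, E^*_i$ change. For example, under $\downarrow$ the $i$-th primitive idempotent for $A^*$ in $\Phi^\downarrow$ is $E^*_{d-i}$, whose associated eigenvalue is $\th^*_{d-i}$, giving $(\th^*_i)^\downarrow = \th^*_{d-i}$; the remaining five entries in these two columns are handled analogously.

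Third I would address the $\vphi$ and $\phi$ columns using the trace formula of Definition \ref{def:parray}. The case $g=\Downarrow$ is the cleanest: since only the $E_i$ are reordered while $E^*_0$ and the $\th^*_i$ are preserved, the formula for $\vphi_i^\Downarrow$ is obtained from that for $\vphi_i$ by replacing each $\th_h$ with $\th_{d-h}$, which reproduces exactly the formula for $\phi_i$, and symmetrically $\phi_i^\Downarrow = \vphi_i$. The cases $g=\downarrow$ and $g=*$ are more subtle: the former replaces $E^*_0$ by $E^*_d$ inside the traces, and the latter interchanges the roles of $A, A^*$ and of $E_i, E^*_i$, giving a dual-looking expression that must be identified with the original $\vphi_i$ (and with $\phi_{d-i+1}$ for $\phi_i^*$).

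The main obstacle lies in these $\downarrow$ and $*$ cases. My approach would be to pass through the split decomposition of $V$ associated with $\Phi$, in which $\vphi_i$ and $\phi_i$ appear as explicit matrix entries of $A$ and $A^*$ in a preferred basis. There are four natural split decompositions, one for each ordered choice from $\{E^*_0, E^*_d\} \times \{E_0, E_d\}$, and the sequences of matrix entries arising from these four decompositions are precisely $\{\vphi_i\}$, $\{\phi_i\}$, and their two reversals. Translating the split basis for $\Phi^{g^{-1}}$ back into the split basis for $\Phi$ would then yield the identities $\vphi^\downarrow_i = \phi_{d-i+1}$, $\phi^\downarrow_i = \vphi_{d-i+1}$, $\vphi^*_i = \vphi_i$, and $\phi^*_i = \phi_{d-i+1}$. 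Once these identities are verified on the generators $\downarrow, \Downarrow, *$, the entries for every element of $D_4$ follow from the relations \eqref{eq:relation} together with the functoriality of $f \mapsto f^g$.
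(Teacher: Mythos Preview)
The paper does not prove this lemma at all: it is quoted from \cite[Theorem 1.11]{T:Leonard} without argument, so there is no ``paper's own proof'' to compare against. Your outline is essentially the proof given in that reference. In particular, Terwilliger introduces the split sequences $\{\vphi_i\}$ and $\{\phi_i\}$ precisely as the off-diagonal entries of $A$ and $A^*$ in the four split bases attached to the choices $(E^*_0,E_0)$, $(E^*_0,E_d)$, $(E^*_d,E_0)$, $(E^*_d,E_d)$, and the $D_4$ action permutes these four bases, yielding the table. Your treatment of the eigenvalue columns and of the $\Downarrow$ case via the trace formula is correct and a bit more direct than passing through the split basis for that particular generator; your recognition that the $\downarrow$ and $*$ cases cannot be read off the trace formula alone and require the split decomposition is exactly right.
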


\begin{lemma}    \label{lem:aiD4}    \samepage
\ifDRAFT {\rm lem:aiD4}. \fi
Let $\Phi$ be a Leonard system over $\F$ with principal sequence $\{a_i\}_{i=0}^d$
and dual principal sequence $\{a^*_i\}_{i=0}^d$.
Then for $0 \leq i \leq d$
\begin{align*}
 a_i^{\downarrow} &= a_{d-i}, &
 {a^*_i}^{\downarrow} &= a^*_i,   &
 a_i^{\Downarrow} &= a_i,   &
 {a^*_i}^{\Downarrow} &= a^*_{d-i}.
\end{align*}
\end{lemma}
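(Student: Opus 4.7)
The plan is to unpack the definitions directly. By Definition \ref{def:ai}, for any Leonard system $\Psi = (B, \{F_i\}_{i=0}^d, B^*, \{F^*_i\}_{i=0}^d)$, the $i$-th entries of its principal and dual principal sequences are $\text{tr}(B F^*_i)$ and $\text{tr}(B^* F_i)$ respectively. Since both $\downarrow$ and $\Downarrow$ are involutions in $D_4$, the convention ``$f^g$ is the object for $\Phi^{g^{-1}}$'' collapses to ``$a_i^g$ is the $i$-th principal entry of $\Phi^g$'' for $g \in \{\downarrow, \Downarrow\}$, so I only need to read off these traces from the explicit descriptions of $\Phi^{\downarrow}$ and $\Phi^{\Downarrow}$ given at the start of Section \ref{sec:D4}.

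First I would treat $\downarrow$. Since
\[
\Phi^{\downarrow} = (A, \{E_i\}_{i=0}^d, A^*, \{E^*_{d-i}\}_{i=0}^d),
\]
the first component of the pair is still $A$ and the first family of idempotents is still $\{E_i\}_{i=0}^d$, while the dual idempotent ordering is reversed. Hence the $i$-th principal entry of $\Phi^{\downarrow}$ is $\text{tr}(A \cdot E^*_{d-i}) = a_{d-i}$, giving $a_i^{\downarrow} = a_{d-i}$, and the $i$-th dual principal entry is $\text{tr}(A^* E_i) = a^*_i$, giving ${a^*_i}^{\downarrow} = a^*_i$.

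Next I would treat $\Downarrow$. Since
\[
\Phi^{\Downarrow} = (A, \{E_{d-i}\}_{i=0}^d, A^*, \{E^*_i\}_{i=0}^d),
\]
the dual idempotent ordering $\{E^*_i\}_{i=0}^d$ is unchanged while the primary ordering is reversed. Reading off the definitions gives $a_i^{\Downarrow} = \text{tr}(A E^*_i) = a_i$ and ${a^*_i}^{\Downarrow} = \text{tr}(A^* E_{d-i}) = a^*_{d-i}$, completing the lemma.

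There is no genuine obstacle here; the entire argument is a bookkeeping exercise in the definitions, and the only point to which one must attend is the convention $f^g \leftrightarrow \Phi^{g^{-1}}$, which is harmless for involutions. (If one were later to extend this table to the transposition $*$, the computation would be more delicate because $*$ swaps the roles of $A$ and $A^*$, and one would invoke \cite[Lemma 4.4]{T:Leonard} or a direct trace identity; but that case is not claimed in Lemma \ref{lem:aiD4}.)
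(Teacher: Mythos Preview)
Your proof is correct and is precisely the approach the paper takes: the paper's entire proof is the single sentence ``Immediate from Definition \ref{def:ai},'' and you have simply spelled out that unpacking in detail. The only difference is verbosity; the substance is identical.
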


\begin{proof}
Immediate from Definition \ref{def:ai}.
\end{proof}

\section{The end-entries}
\label{sec:endentries}

In this section we recall some formulas concerning the end-entries.
Fix an integer $d \geq 1$.
Let 
$\Phi = (A, \{E_i\}_{i=0}^d$, $A^*, \{E^*_i\}_{i=0}^d)$
be a Leonard system over $\F$ with parameter array
$(\{\th_i\}_{i=0}^d, \{\th^*_i\}_{i=0}^d, \{\vphi_i\}_{i=1}^d$, $\{\phi_i\}_{i=1}^d)$.
Let $\{a_i\}_{i=0}^d$ (resp.\ $\{a^*_i\}_{i=0}^d$) be the principal sequence
(resp. dual principal sequence) of $\Phi$.

\begin{lemma}   {\rm (See \cite[Lemma 5.1]{T:Leonard}, \cite[Lemma 10.3]{T:24points}.)}
\label{lem:a0th0}   \samepage
\ifDRAFT {\rm lem:a0th0}. \fi
With the above notation,
\begin{align}
 a_0 - \th_0 &= \frac{\vphi_1}{\th^*_0 - \th^*_1},  &
 a_d - \th_d &= \frac{\vphi_d}{\th^*_d - \th^*_{d-1}},    \label{eq:a0ad1}
\\
 a_0 - \th_d &= \frac{\phi_1}{\th^*_0 - \th^*_1}, &
 a_d - \th_0 &= \frac{\phi_d}{\th^*_{d}- \th^*_{d-1}},     \label{eq:a0ad2}
\\
 a^*_0 - \th^*_0 &= \frac{\vphi_1}{\th_0 - \th_1},   &
 a^*_d - \th^*_d &=  \frac{\vphi_d}{\th_d - \th_{d-1}},    \label{eq:as0asd1}
\\
 a^*_0 - \th^*_d &=  \frac{\phi_d}{\th_0 - \th_1}, &
 a^*_d - \th^*_0 &= \frac{\phi_1}{\th_d - \th_{d-1}}.      \label{eq:as0asd2}
\end{align}
\end{lemma}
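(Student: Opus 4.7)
The plan is to prove the first formula directly from the definitions, and then obtain the remaining seven formulas by invoking the $D_4$ action of Section \ref{sec:D4}.

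For the direct proof of $a_0 - \th_0 = \vphi_1/(\th^*_0 - \th^*_1)$: I would unpack Definition \ref{def:parray} at $i=1$. The denominator involves an empty product and therefore equals $\tr(E^*_0)$; since $A^*$ has $d+1$ distinct eigenvalues on a $(d+1)$-dimensional space, each eigenspace is one-dimensional, so $E^*_0$ is a rank-one projector and $\tr(E^*_0) = 1$. The numerator is $\tr(E^*_0(A - \th_0 I)) = \tr(AE^*_0) - \th_0\tr(E^*_0) = a_0 - \th_0$ by Definition \ref{def:ai} and the cyclic property of trace. Hence $\vphi_1 = (\th^*_0 - \th^*_1)(a_0 - \th_0)$. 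Exactly the same computation with $A - \th_d I$ in place of $A - \th_0 I$ unpacks $\phi_1$ and gives $a_0 - \th_d = \phi_1/(\th^*_0 - \th^*_1)$, which is \eqref{eq:a0ad2} (left).

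For the six remaining formulas I would apply the $D_4$ action. Using Lemma \ref{lem:D4} and Lemma \ref{lem:aiD4} I read off how the relevant quantities transform:
\begin{align*}
\downarrow &: \; a_0 \to a_d,\; \th_0 \to \th_0,\; \th^*_0 \to \th^*_d,\; \th^*_1 \to \th^*_{d-1},\; \vphi_1 \to \phi_d; \\
\Downarrow &: \; a_0 \to a_0,\; \th_0 \to \th_d,\; \th^*_0 \to \th^*_0,\; \th^*_1 \to \th^*_1,\; \vphi_1 \to \phi_1; \\
* &: \; a_0 \to a^*_0,\; \th_0 \to \th^*_0,\; \th^*_0 \to \th_0,\; \th^*_1 \to \th_1,\; \vphi_1 \to \vphi_1.
\end{align*}
Applying the identity $a_0 - \th_0 = \vphi_1/(\th^*_0 - \th^*_1)$ to $\Phi^\downarrow$, $\Phi^{\downarrow\Downarrow}$, $\Phi^*$, $\Phi^{*\downarrow}$, $\Phi^{*\Downarrow}$, $\Phi^{*\downarrow\Downarrow}$ in turn produces the formulas for $a_d - \th_0$, $a_d - \th_d$, $a^*_0 - \th^*_0$, $a^*_d - \th^*_0$, $a^*_0 - \th^*_d$, $a^*_d - \th^*_d$ respectively. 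This exhausts the eight equations.

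There is no serious obstacle. The only subtleties are (i) verifying $\tr(E^*_0) = 1$ (which is immediate from Lemma 1.3 of \cite{T:Leonard}, quoted in the excerpt after Definition \ref{def:LS}) and (ii) careful bookkeeping of the $D_4$ action, in particular the inversion of indices in $\vphi^*_i = \vphi_i$ but $\phi^*_i = \phi_{d-i+1}$, which one must keep track of when composing $*$ with $\downarrow$ or $\Downarrow$.
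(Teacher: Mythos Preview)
Your proposal is correct and matches the paper's own approach: the paper does not give a proof in situ (it cites \cite[Lemma 5.1]{T:Leonard} and \cite[Lemma 10.3]{T:24points}) but immediately afterward remarks in a Note that ``the eight equations are obtained from one of them by applying $D_4$,'' which is precisely your reduction step. Your direct derivation of the first formula from Definition~\ref{def:parray} at $i=1$ is a clean, self-contained replacement for the external citation; the only small point to add is that the fact $a_i^{*} = a^*_i$ (needed to pass from the $a$-formulas to the $a^*$-formulas) is not recorded in Lemma~\ref{lem:aiD4}, but it follows immediately from Definition~\ref{def:ai} and the definition of $\Phi^*$.
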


\begin{note}
In Lemma \ref{lem:a0th0}, the eight equations are obtained from one of them
by applying $D_4$.
\end{note}

The following lemmas are well-known.
We give a short proof based on Lemma \ref{lem:a0th0}
for convenience of the reader.

\begin{lemma}    \label{lem:a0neqth0}    \samepage
\ifDRAFT {\rm lem:a0neqth0}. \fi
With the above notation,
\begin{align*}
 a_0 &\neq \th_0,  &  
 a_0 &\neq \th_d,  &  
 a_d &\neq \th_0,  & 
 a_d &\neq \th_d,
\\
 a^*_0 &\neq \th^*_0,  &
 a^*_0 &\neq \th^*_d, &
 a^*_d &\neq \th^*_0, &
 a^*_d & \neq \th^*_d.
\end{align*}
\end{lemma}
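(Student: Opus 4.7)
The plan is to read the eight inequalities directly off the eight formulas in Lemma \ref{lem:a0th0}. Each right-hand side there is a quotient whose numerator is one of $\vphi_1, \vphi_d, \phi_1, \phi_d$ and whose denominator is either $\th^*_0 - \th^*_1$, $\th^*_d - \th^*_{d-1}$, $\th_0 - \th_1$, or $\th_d - \th_{d-1}$.

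First I would invoke Lemma \ref{lem:classify}(ii), which guarantees that every $\vphi_i$ and every $\phi_i$ is nonzero (in particular the four boundary values $\vphi_1, \vphi_d, \phi_1, \phi_d$). Next I would invoke Lemma \ref{lem:classify}(i), which asserts that the $\th_i$'s are mutually distinct and that the $\th^*_i$'s are mutually distinct; in particular $\th^*_0 \neq \th^*_1$, $\th^*_{d-1}\neq \th^*_d$, $\th_0 \neq \th_1$, and $\th_{d-1}\neq\th_d$ (here we use $d \geq 1$, so that the indices $0,1$ and $d-1,d$ index valid entries of the sequences). Consequently every denominator in Lemma \ref{lem:a0th0} is nonzero, so each of the eight right-hand sides is a well-defined nonzero scalar, and the eight claimed inequalities follow.

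No obstacle is anticipated, since the statement is a direct corollary of Lemma \ref{lem:a0th0} together with the standing nonvanishing and distinctness conditions on the parameter array. The only bookkeeping point is to note explicitly that the formulas of Lemma \ref{lem:a0th0} already appear in eight equivalent forms obtained from one another by the $D_4$-action (cf.\ the preceding note), so there is nothing to check beyond observing that the numerator and denominator of each formula are independently nonzero.
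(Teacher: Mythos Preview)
Your proposal is correct and matches the paper's own proof, which simply says the result follows from Lemma~\ref{lem:classify}(ii) and Lemma~\ref{lem:a0th0}. Your additional remark invoking Lemma~\ref{lem:classify}(i) for the nonvanishing of the denominators is a harmless clarification (the paper leaves this implicit since the formulas in Lemma~\ref{lem:a0th0} are already asserted as equalities).
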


\begin{proof}
Follows from Lemma \ref{lem:classify}(ii) and Lemma \ref{lem:a0th0}.
\end{proof} 

\begin{lemma}   \label{lem:a0th0/a0thd}    \samepage
\ifDRAFT {\rm lem:a0th0/a0thd}. \fi
With the above notation,
\begin{align}
\frac{a_0-\th_0}{a_0-\th_d} &= \frac{\vphi_1}{\phi_1},  &
\frac{a_d-\th_d}{a_d-\th_0} &= \frac{\vphi_d}{\phi_d},         \label{eq:a0th0/a0thd}
\\
\frac{a^*_0-\th^*_0}{a^*_0-\th^*_d} &= \frac{\vphi_1}{\phi_d}, &
\frac{a^*_d-\th^*_d}{a^*_d-\th^*_0} &= \frac{\vphi_d}{\phi_1}.  \label{eq:as0ths0/as0thsd}
\end{align}
\end{lemma}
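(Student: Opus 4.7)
The plan is essentially to divide the appropriate pairs of equations from Lemma \ref{lem:a0th0}. Each of the four identities in the conclusion is obtained as the ratio of two displays from Lemma \ref{lem:a0th0} that share a common denominator.

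More precisely, the first identity $(a_0-\th_0)/(a_0-\th_d) = \vphi_1/\phi_1$ follows by dividing the left equation of \eqref{eq:a0ad1} by the left equation of \eqref{eq:a0ad2}: both have denominator $\th^*_0 - \th^*_1$, which cancels, leaving $\vphi_1/\phi_1$. The second identity $(a_d-\th_d)/(a_d-\th_0) = \vphi_d/\phi_d$ is the analogous division of the right equations of \eqref{eq:a0ad1} and \eqref{eq:a0ad2}, using the common factor $\th^*_d - \th^*_{d-1}$. The two starred identities \eqref{eq:as0ths0/as0thsd} arise similarly from \eqref{eq:as0asd1} and \eqref{eq:as0asd2}, with common factors $\th_0-\th_1$ and $\th_d-\th_{d-1}$ respectively.

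To justify the divisions, I need the denominators to be nonzero. The quantities $a_0-\th_d$, $a_d-\th_0$, $a^*_0-\th^*_d$, $a^*_d-\th^*_0$ are nonzero by Lemma \ref{lem:a0neqth0}, and the cancelled factors $\th^*_0-\th^*_1$, $\th^*_d-\th^*_{d-1}$, $\th_0-\th_1$, $\th_d-\th_{d-1}$ are nonzero by Lemma \ref{lem:classify}(i). There is no obstacle here; the only thing to watch is bookkeeping, namely pairing the correct left/right equations of Lemma \ref{lem:a0th0} so that the $\th^*$-factor (respectively $\th$-factor) actually cancels. Alternatively, once the first identity is established, the remaining three may be obtained by applying the $D_4$ action via Lemma \ref{lem:D4} and Lemma \ref{lem:aiD4}, which mirrors the remark following Lemma \ref{lem:a0th0}.
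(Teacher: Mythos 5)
Your proposal is correct and matches the paper's argument: the paper proves the lemma simply by "Follows from Lemma \ref{lem:a0th0}," i.e., by taking the same ratios of the paired equations sharing a common denominator, with the nonvanishing of the relevant quantities guaranteed exactly as you note by Lemma \ref{lem:a0neqth0} and Lemma \ref{lem:classify}(i),(ii). Your pairing of the equations is the right one, so there is nothing to fix.
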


\begin{proof}
Follows from Lemma \ref{lem:a0th0}.
\end{proof}

\begin{lemma}    \label{lem:th1ths1}   \samepage
\ifDRAFT {\rm lem:th1ths1}. \fi
With the above notation,
\begin{align}
 \th_0 - \th_1 &= \frac{\phi_d - \vphi_1}{\th^*_0 - \th^*_d}, 
&
\th_d - \th_{d-1} &= \frac{\vphi_d - \phi_1}{\th^*_0 - \th^*_d},  \label{eq:th1thd-1}
\\
\th^*_0 - \th^*_1 &= \frac{\phi_1 - \vphi_1}{\th_0 - \th_d}, 
&
\th^*_d - \th^*_{d-1} &= \frac{\vphi_d - \phi_d}{\th_0-\th_d}.      \label{eq:ths1thsd-1}
\end{align}
\end{lemma}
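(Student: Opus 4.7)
The plan is to derive all four identities directly from Lemma \ref{lem:a0th0} by subtracting suitable pairs of equations, exploiting the fact that in the listed eight equalities, the expressions for $a_0-\th_0$ and $a_0-\th_d$ share the same denominator $\th^*_0-\th^*_1$ (and similarly for the other three pairs on each side).

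First I would prove $\th^*_0-\th^*_1=(\phi_1-\vphi_1)/(\th_0-\th_d)$. Lemma \ref{lem:a0th0} gives $a_0-\th_0=\vphi_1/(\th^*_0-\th^*_1)$ and $a_0-\th_d=\phi_1/(\th^*_0-\th^*_1)$; subtracting the first from the second yields $\th_0-\th_d=(\phi_1-\vphi_1)/(\th^*_0-\th^*_1)$, which is the desired identity after rearrangement. The same maneuver starting from $a_d-\th_d$ and $a_d-\th_0$ gives $\th^*_d-\th^*_{d-1}=(\vphi_d-\phi_d)/(\th_0-\th_d)$. The other two identities come by the dual computation: subtracting $a^*_0-\th^*_0=\vphi_1/(\th_0-\th_1)$ from $a^*_0-\th^*_d=\phi_d/(\th_0-\th_1)$ yields $\th^*_d-\th^*_0=(\phi_d-\vphi_1)/(\th_0-\th_1)$, equivalent to $\th_0-\th_1=(\phi_d-\vphi_1)/(\th^*_0-\th^*_d)$; and the pair $a^*_d-\th^*_d$, $a^*_d-\th^*_0$ similarly gives $\th_d-\th_{d-1}=(\vphi_d-\phi_1)/(\th^*_0-\th^*_d)$.

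Since each target identity is simply an equality of a denominator expression, I should also remark that the relevant denominators ($\th_0-\th_d$ and $\th^*_0-\th^*_d$) are nonzero by Lemma \ref{lem:classify}(i), so the rearrangements are legitimate. Alternatively, one can short-circuit half the work using the $D_4$ action from Lemma \ref{lem:D4}: the four identities form a single $D_4$-orbit (applying $*$, $\downarrow$, $\Downarrow$ permutes them), so once one is established the others follow by $D_4$ symmetry, paralleling the Note immediately after Lemma \ref{lem:a0th0}.

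There is no real obstacle here; the argument is a one-line subtraction in each case. The only thing to be careful about is the sign bookkeeping when rewriting ratios like $(\vphi_1-\phi_d)/(\th^*_d-\th^*_0)$ in the normalized form $(\phi_d-\vphi_1)/(\th^*_0-\th^*_d)$ stated in the lemma.
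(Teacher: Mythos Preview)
Your proposal is correct and matches the paper's approach exactly: the paper also derives the first identity in \eqref{eq:th1thd-1} by subtracting the left-hand equations in \eqref{eq:as0asd1} and \eqref{eq:as0asd2}, and says the rest follow similarly. Your additional remarks about nonzero denominators and the $D_4$ symmetry are sound elaborations but not required.
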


\begin{proof}
The equation on the left in \eqref{eq:th1thd-1} is obtained from the equations
on the left in \eqref{eq:as0asd1} and \eqref{eq:as0asd2}.
The remaining equations can be obtained in a similar way.
\end{proof}

\begin{note}
Lemma \ref{lem:th1ths1} can be obtained also from
Lemma \ref{lem:classify}(iii), (iv).
\end{note}

\begin{lemma}   \label{lem:vphi-phi}    \samepage
\ifDRAFT {\rm lem:vphi-phi}. \fi
With the above notation,
\[
  \phi_1 \neq \vphi_1, \qquad\qquad
  \phi_1 \neq \vphi_d, \qquad\qquad
  \phi_d \neq \vphi_1, \qquad\qquad
  \phi_d \neq \vphi_d.
\]
\end{lemma}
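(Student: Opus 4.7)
The plan is to read off all four inequalities directly from Lemma~\ref{lem:th1ths1}, without any additional work. Each of the four displayed equations in Lemma~\ref{lem:th1ths1} has the shape
\[
 (\text{difference of two distinct }\th\text{'s or }\th^*\text{'s}) \;=\; \frac{(\text{difference of two of }\vphi_1,\vphi_d,\phi_1,\phi_d)}{\th^*_0-\th^*_d \text{ or } \th_0-\th_d}.
\]
Since $d\ge 1$, Lemma~\ref{lem:classify}(i) tells us that $\{\th_i\}_{i=0}^d$ and $\{\th^*_i\}_{i=0}^d$ are each mutually distinct; in particular $\th_0\neq\th_1$, $\th_d\neq\th_{d-1}$, $\th^*_0\neq\th^*_1$, $\th^*_d\neq\th^*_{d-1}$, and the denominators $\th^*_0-\th^*_d$ and $\th_0-\th_d$ are nonzero.

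Consequently each of the four numerators must be nonzero. The left equation in \eqref{eq:th1thd-1} forces $\phi_d\neq\vphi_1$; the right equation in \eqref{eq:th1thd-1} forces $\vphi_d\neq\phi_1$; the left equation in \eqref{eq:ths1thsd-1} forces $\phi_1\neq\vphi_1$; the right equation in \eqref{eq:ths1thsd-1} forces $\vphi_d\neq\phi_d$. Together these yield exactly the four inequalities claimed in Lemma~\ref{lem:vphi-phi}.

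There is no serious obstacle here: once Lemma~\ref{lem:th1ths1} is in hand, the result is essentially bookkeeping, with the only input being the distinctness of the eigenvalues. An alternative route, if one wished to avoid citing Lemma~\ref{lem:th1ths1}, would be to combine the relevant pairs of equations in Lemma~\ref{lem:a0th0}: for instance subtracting the left equation of \eqref{eq:a0ad1} from the left equation of \eqref{eq:a0ad2} gives $\th_d-\th_0=(\phi_1-\vphi_1)/(\th^*_0-\th^*_1)$, whence $\phi_1\neq\vphi_1$. But going through Lemma~\ref{lem:th1ths1}, which the excerpt has already proved, is the most economical presentation.
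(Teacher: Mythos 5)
Your proof is correct and is exactly the paper's argument: the paper proves this lemma by citing Lemma \ref{lem:th1ths1} together with Lemma \ref{lem:classify}(i), which is precisely the bookkeeping you carry out. (Only a trivial sign slip in your aside: subtracting the left equation of \eqref{eq:a0ad1} from that of \eqref{eq:a0ad2} gives $\th_0-\th_d=(\phi_1-\vphi_1)/(\th^*_0-\th^*_1)$, which of course yields the same conclusion.)
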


\begin{proof}
Follows from Lemma \ref{lem:th1ths1} and Lemma \ref{lem:classify}(i).
\end{proof}

\begin{lemma}    \label{lem:key}    \samepage
\ifDRAFT {\rm lem:key}. \fi
With the above notation,
\begin{align*}
 a_0 - \th_0 &= \frac{\vphi_1(\th_0-\th_d)}{\phi_1-\vphi_1},  &
 a_d - \th_d &= \frac{\vphi_d(\th_0-\th_d)}{\vphi_d-\phi_d},
\\
 a_0 - \th_d &= \frac{\phi_1(\th_0-\th_d)}{\phi_1 - \vphi_1}, &
 a_d - \th_0 &= \frac{\phi_d(\th_0-\th_d)}{\vphi_d-\phi_d},
\\
 a^*_0 - \th^*_0 &= \frac{\vphi_1(\th^*_0 - \th^*_d)}{\phi_d-\vphi_1},  &
 a^*_d - \th^*_d &= \frac{ \vphi_d (\th^*_0 - \th^*_d) }  
                                   { \vphi_d - \phi_1},
\\
 a^*_0 - \th^*_d &= \frac{\phi_d(\th^*_0 - \th^*_d)}{\phi_d - \vphi_1}, &
 a^*_d - \th^*_0 &= \frac{\phi_1(\th^*_0 - \th^*_d)}{\vphi_d - \phi_1}.
\end{align*}
\end{lemma}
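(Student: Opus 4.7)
The plan is to derive Lemma \ref{lem:key} by combining Lemma \ref{lem:a0th0} with Lemma \ref{lem:th1ths1}: the former expresses each end-entry difference as a ratio $\vphi_?/(\th^*_?-\th^*_?)$ or $\phi_?/(\th_?-\th_?)$, while the latter rewrites each of those ``first step'' differences $\th_0-\th_1$, $\th_d-\th_{d-1}$, $\th^*_0-\th^*_1$, $\th^*_d-\th^*_{d-1}$ in terms of $\vphi_1,\vphi_d,\phi_1,\phi_d$ and $\th_0-\th_d$ or $\th^*_0-\th^*_d$. Substituting the expressions from Lemma \ref{lem:th1ths1} into the denominators appearing in Lemma \ref{lem:a0th0} immediately yields each of the eight desired identities.

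Concretely, for the first identity I would compute
\[
  a_0-\th_0 \;=\; \frac{\vphi_1}{\th^*_0-\th^*_1}
           \;=\; \vphi_1\cdot\frac{\th_0-\th_d}{\phi_1-\vphi_1},
\]
using \eqref{eq:a0ad1} and \eqref{eq:ths1thsd-1}. The three identities in the ``left column'' of Lemma \ref{lem:key} for $a_0-\th_0$, $a_0-\th_d$, $a^*_0-\th^*_0$, $a^*_0-\th^*_d$ (involving $\th^*_0-\th^*_1$ or $\th_0-\th_1$) all come out by the same one-line substitution using \eqref{eq:a0ad1}, \eqref{eq:a0ad2}, \eqref{eq:as0asd1}, \eqref{eq:as0asd2} on one side and the appropriate equation from \eqref{eq:th1thd-1}, \eqref{eq:ths1thsd-1} on the other. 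The four ``$d$-end'' identities are derived analogously using the right-hand equations in Lemmas \ref{lem:a0th0} and \ref{lem:th1ths1}.

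Since the algebra is essentially a single substitution repeated eight times, no real obstacle appears. The only bookkeeping point is that all denominators must be shown to be nonzero: $\phi_1-\vphi_1$, $\vphi_d-\phi_d$, $\phi_d-\vphi_1$, $\vphi_d-\phi_1$ are all nonzero by Lemma \ref{lem:vphi-phi}, while $\th_0-\th_d$ and $\th^*_0-\th^*_d$ are nonzero by Lemma \ref{lem:classify}(i) (which is implicitly being used to make the right-hand sides well defined). Alternatively, one could cut the case analysis in half by invoking the $D_4$-action: the four ``$d$-end'' formulas are obtained from the four ``$0$-end'' formulas by applying the permutation $\Downarrow$ (using Lemma \ref{lem:D4} and Lemma \ref{lem:aiD4}), and the four ``starred'' formulas from the four unstarred ones via $*$. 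I would remark on this symmetry but carry out the two representative computations explicitly.
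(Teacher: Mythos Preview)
Your proposal is correct and matches the paper's own proof, which simply says ``Use Lemmas \ref{lem:a0th0} and \ref{lem:th1ths1}.'' Your write-up just spells out the one-line substitution that the paper leaves implicit, and your remarks on nonvanishing denominators and the optional $D_4$ shortcut are accurate elaborations.
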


\begin{proof}
Use Lemmas \ref{lem:a0th0} and \ref{lem:th1ths1}.
\end{proof}

\begin{lemma}   \label{lem:key2}   \samepage
\ifDRAFT {\rm lem:key2}. \fi
With the above notation,
\begin{align*}
 a_0 &= \frac{\th_0 \phi_1 - \th_d \vphi_1}{\phi_1 - \vphi_1}, 
&
 a_d &= \frac{\th_d \phi_d - \th_0 \vphi_d}{\phi_d - \vphi_d},
\\
 a^*_0 &= \frac{\th^*_0 \phi_d - \th^*_d \vphi_1}{\phi_d - \vphi_1},
&
 a^*_d &= \frac{\th^*_d \phi_1 - \th^*_0 \vphi_d}{\phi_1 - \vphi_d}. 
\end{align*}
\end{lemma}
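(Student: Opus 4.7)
The plan is to derive each of the four identities directly from Lemma \ref{lem:key} by adding the appropriate base eigenvalue to both sides and combining fractions over a common denominator. Concretely, for the first identity I would start from
\[
 a_0 - \th_0 = \frac{\vphi_1(\th_0-\th_d)}{\phi_1-\vphi_1}
\]
(from Lemma \ref{lem:key}) and add $\th_0$ to both sides, yielding
\[
 a_0 = \frac{\th_0(\phi_1-\vphi_1) + \vphi_1(\th_0-\th_d)}{\phi_1-\vphi_1}
     = \frac{\th_0\phi_1 - \th_d\vphi_1}{\phi_1-\vphi_1},
\]
where the denominator is nonzero by Lemma \ref{lem:vphi-phi}.

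The remaining three identities follow by the same mechanism: for $a_d$ use the expression for $a_d-\th_d$ and add $\th_d$; for $a^*_0$ use the expression for $a^*_0-\th^*_0$ and add $\th^*_0$; for $a^*_d$ use the expression for $a^*_d-\th^*_d$ and add $\th^*_d$. In each case the numerator simplifies after the obvious cancellation, and the denominator is nonzero by Lemma \ref{lem:vphi-phi}. Alternatively, one could invoke the $D_4$ action (Lemmas \ref{lem:D4}, \ref{lem:aiD4}) to derive the last three formulas from the first, but the direct route is shorter and equally transparent.

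There is no real obstacle here; the only thing to verify is the arithmetic identity $\th_0(\phi_1-\vphi_1)+\vphi_1(\th_0-\th_d) = \th_0\phi_1 - \th_d\vphi_1$ and its three analogues, all of which are immediate.
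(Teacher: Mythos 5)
Your proposal is correct and is essentially the paper's proof: the paper's argument for Lemma \ref{lem:key2} is simply that it follows from Lemma \ref{lem:key}, and your computation (adding $\th_0$, $\th_d$, $\th^*_0$, $\th^*_d$ to the corresponding identities of Lemma \ref{lem:key} and simplifying) is exactly the intended verification. The appeal to Lemma \ref{lem:vphi-phi} for the nonvanishing of the denominators is also the right justification.
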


\begin{proof}
Follows from Lemma \ref{lem:key}.
\end{proof}

\section{Proof of Proposition \ref{prop:restriction}}
\label{sec:restriction}

In this section we prove Proposition \ref{prop:restriction}.
Fix an integer $d \geq 1$.
Let
$\Phi = (A, \{E_i\}_{i=0}^d, A^*$, $\{E^*_i\}_{i=0}^d)$
be a Leonard system over $\F$ with parameter array
$(\{\th_i\}_{i=0}^d, \{\th^*_i\}_{i=0}^d, \{\vphi_i\}_{i=1}^d$, $\{\phi_i\}_{i=1}^d)$.
Let $\{a_i\}_{i=0}^d$ (resp.\ $\{a^*_i\}_{i=0}^d$) be the principal sequence
(resp.\ dual principal sequence) of $\Phi$.

\begin{proofof}{Proposition \ref{prop:restriction}}
In \eqref{eq:restriction}, all the denominators are nonzero by Lemma \ref{lem:a0neqth0}.
Using Lemma \ref{lem:a0th0/a0thd} one checks that each side of \eqref{eq:restriction}
is equal to $\vphi_1 \vphi_d (\phi_1 \phi_d)^{-1}$,
so \eqref{eq:restriction} holds. 
Rewrite \eqref{eq:restriction} as
\begin{equation}       \label{eq:restriction2}
 (a_0-\th_0)(a_d-\th_d)(a^*_0-\th^*_d)(a^*_d-\th^*_0)
 - (a_0-\th_d)(a_d-\th_0)(a^*_0-\th^*_0)(a^*_d-\th^*_d) = 0.
\end{equation}
Viewing \eqref{eq:restriction2} as a linear equation in $a_0$, the coefficient of
$a_0$ is
\[
 (a_d-\th_d)(a^*_0-\th^*_d)(a^*_d-\th^*_0) 
 - (a_d-\th_0)(a^*_0-\th^*_0)(a^*_d-\th^*_d).
\]
Using Lemma \ref{lem:key} one checks that the above coefficient is equal to
\[
   \frac{\vphi_d \phi_d (\th_0 - \th_d)(\th^*_0 - \th^*_d)^2 (\phi_1 - \vphi_1)}
          {(\phi_1-\vphi_d)(\phi_d-\vphi_1)(\phi_d-\vphi_d)}.
\]
This is nonzero by Lemma \ref{lem:classify}(i), (ii) and Lemma \ref{lem:vphi-phi}.
So one can solve \eqref{eq:restriction2} in $a_0$.
Thus $a_0$ is determined by 
$\th_0$, $\th_d$, $\th^*_0$, $\th^*_d$, $a_d$, $a^*_0$, $a^*_d$.
Concerning $a_d$, $a^*_0$, $a^*_d$, apply the above arguments to
$\Phi^{\downarrow}$, $\Phi^*$, $\Phi^{\downarrow *}$.
\end{proofof}

\section{The type of a Leonard system}
\label{sec:types}

In this section we recall the type of a Leonard system.
Let $\Phi$ be a Leonard system over $\F$ with diameter $d \geq 3$.
Let $\beta$ be the fundamental parameter of $\Phi$,
and pick a nonzero $q \in \F$ such that $\beta = q+q^{-1}$.

\begin{definition}      \label{def:types}    \samepage
We define the {\em type} of $\Phi$ as follows:
\[
 \begin{array}{c|lll}
  \text{Type of $\Phi$} & & \text{Description} 
\\ \hline
  \text{I} & \;\; \beta \neq  2, & \; \beta \neq -2 \rule{0mm}{4.5mm}
\\
 \text{II} & \;\; \beta=2, & \text{Char}(\F) \neq 2
\\
\text{III}^+ & \;\; \beta = -2, & \text{Char}(\F) \neq 2, & \text{$d$ is even}
\\
\text{III}^- & \;\; \beta = -2, &  \text{Char}(\F) \neq 2, & \text{$d$ is odd}
\\
\text{IV} & \;\; \beta = 0, & \text{Char}(\F)=2
 \end{array}
\]
\end{definition}

\begin{lemma}  {\rm (See \cite[Sections 13--17]{NT:affine}.) }
\label{lem:types}   \samepage
\ifDRAFT {\rm lem:types}. \fi
The following hold:
\begin{itemize}
\item[\rm (i)]
Assume $\Phi$ has type I.
Then $q^i \neq 1$ for $1 \leq i \leq d$.
\item[\rm (ii)]
Assume $\Phi$ has type II.
Then $\text{\rm Char}(\F) \neq i$ for any prime $i$ such that $i \leq d$.
\item[\rm (iii)]
Assume $\Phi$ has type III$^+$.
Then $\text{\rm Char}(\F) \neq i$ for any prime $i$ such that $i \leq d/2$.
In particular, neither of $d$, $d-2$ vanish in $\F$.
\item[\rm (iv)]
Assume $\Phi$ has type III$^-$.
Then $\text{\rm Char}(\F) \neq i$ for any prime $i$ such that $i \leq (d-1)/2$.
In particular, $d-1$ does not vanish in $\F$.
\item[\rm (v)]
Assume $\Phi$ has type IV.
Then $d=3$.
\end{itemize}
\end{lemma}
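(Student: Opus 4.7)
The plan is to extract closed-form expressions for $\{\th_i\}_{i=0}^d$ from the recurrence of Lemma \ref{lem:classify}(v) and then read off the stated constraints from the distinctness condition in Lemma \ref{lem:classify}(i). Setting $a_i := \th_{i-1}-\th_i$, the hypothesis that $(\th_{i-2}-\th_{i+1})/(\th_{i-1}-\th_i)$ equals $\beta+1$ on $2 \le i \le d-1$ is equivalent to the three-term recurrence $a_{i+1}=\beta a_i-a_{i-1}$, whose characteristic polynomial is $(x-q)(x-q^{-1})$. Solving this recurrence and then summing (using $\th_i=\th_0-\sum_{j=1}^i a_j$) gives a closed form for $\th_i$ in each type.

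For (i), the roots $q,q^{-1}$ are distinct, yielding $\th_i=\alpha+\mu q^i+\nu q^{-i}$ with $(\mu,\nu)\ne(0,0)$; if $q^k=1$ for some $1\le k\le d$ then $\th_k=\alpha+\mu+\nu=\th_0$, contradicting Lemma \ref{lem:classify}(i). For (ii), $q=1$ is a double root, and using $\text{Char}(\F)\ne 2$ summation yields $\th_i=\alpha+\mu i+\nu i^2$. Then $\th_i-\th_j=(i-j)(\mu+\nu(i+j))$ carries the factor $i-j$; if $\text{Char}(\F)=p$ is a prime with $p\le d$, taking $i=0$, $j=p$ forces $\th_0=\th_p$, again a contradiction.

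For (iii) and (iv), $q=-1$ is a double root, so $a_i=(-1)^i(A+Ci)$ and summation produces $\th_{2m}=\th_0-Cm$ and $\th_{2m+1}=\th_0+A+C(m+1)$. The even- and odd-indexed subsequences of $\{\th_i\}$ are each arithmetic in $m$: for $d$ even the even-indexed progression has $m\in\{0,\ldots,d/2\}$, while for $d$ odd both progressions have $m\in\{0,\ldots,(d-1)/2\}$. Distinctness within the longest such progression forces $m-m'\ne 0$ in $\F$ for all pairs, yielding the prime bounds $d/2$ and $(d-1)/2$ respectively. The ``in particular'' consequences follow because, combined with $\text{Char}(\F)\ne 2$, any odd prime divisor of $d$, $d-2$ (type III$^+$) or of $d-1$ (type III$^-$) then falls in the excluded range.

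For (v), $\beta=0$ and $\text{Char}(\F)=2$ collapse the recurrence to $a_{i+1}=a_{i-1}$, so $a_i$ is $2$-periodic. If $d\ge 4$ then $\th_4=\th_0-2(a_1+a_2)=\th_0$ in $\F$, contradicting distinctness; together with the standing assumption $d\ge 3$ we conclude $d=3$. The main obstacle in the proof is the parity-dependent bookkeeping in (iii)/(iv): one must identify the correct arithmetic subprogression and match its index range to the claimed prime bound, since a naive single-progression argument does not distinguish the two subtypes.
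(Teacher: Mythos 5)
Your proposal is correct, but note that the paper itself contains no proof of this lemma: it is quoted verbatim with a citation to \cite[Sections 13--17]{NT:affine}, so there is nothing internal to compare against. Your argument is a sound, self-contained substitute, and it runs along the standard lines one would expect from the cited reference: condition (v) of Lemma \ref{lem:classify} gives the three-term recurrence $a_{i+1}=\beta a_i-a_{i-1}$ for the differences $a_i=\th_{i-1}-\th_i$ ($2\le i\le d-1$), whose solution yields the closed forms $\th_i=\alpha+\mu q^i+\nu q^{-i}$ (type I, using $q\ne 1$), $\th_i=\alpha+\mu i+\nu i^2$ (type II, where the division by $2$ is legitimate precisely because $\mathrm{Char}(\F)\ne 2$ is part of the type definition), the two interleaved arithmetic progressions $\th_{2m}=\th_0-Cm$, $\th_{2m+1}=\th_0+A+C(m+1)$ (types III$^{\pm}$), and the $2$-periodicity of the $a_i$ in type IV; the distinctness condition (i) of Lemma \ref{lem:classify} then forces each stated constraint. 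I checked the delicate points: in (i) the conclusion $\th_k=\th_0$ when $q^k=1$ needs no nondegeneracy of $(\mu,\nu)$; in (iii)--(iv) distinctness of $\th_{2m}$ and $\th_{2m'}$ gives $C(m-m')\ne 0$ and hence directly $m-m'\ne 0$ in $\F$, so your index-range bookkeeping ($m\le d/2$ for $d$ even, $m\le (d-1)/2$ for $d$ odd) delivers exactly the claimed prime bounds; the ``in particular'' statements follow as you say, since $d$, $d-2$ (resp.\ $d-1$) are even, so any prime divisor is either $2$ (excluded by $\mathrm{Char}(\F)\ne 2$) or an odd prime at most half of the number, hence in the excluded range; and in (v) the recurrence gives $a_3=a_1$, $a_4=a_2$ once $d\ge 4$, whence $\th_4=\th_0$ in characteristic $2$, so $d=3$. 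No gaps; the only remark is that your proof uses only the eigenvalue sequence $\{\th_i\}$, which is all the statement requires.
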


\section{The end-parameters}
\label{sec:endparam}

In this section, we first recall some results from \cite{N:endparam}.
We then prove Proposition \ref{prop:Delta0}.
Fix an integer $d \geq 3$.
Let $\Phi$ be a Leonard system over $\F$ with diameter $d$
that has parameter array
$(\{\th_i\}_{i=0}^d, \{\th^*_i\}_{i=0}^d, \{\vphi_i\}_{i=1}^d, \{\phi_i\}_{i=1}^d)$.
Consider the end-parameters:
\[
\th_0, \quad
\th_d, \quad
\th^*_0, \quad
\th^*_d, \quad
\vphi_1, \quad
\vphi_d, \quad
\phi_1, \quad
\phi_d.
\]

\begin{lemma}  {\rm (See \cite[Theorem 1.9]{N:endparam}.)}
 \label{lem:endparam}   \samepage
\ifDRAFT {\rm lem:endparam}. \fi
A Leonard system over $\F$ with diameter $d$ is determined up to isomorphism 
by its fundamental parameter and its end-parameters.
\end{lemma}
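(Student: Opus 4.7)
The plan is to invoke Lemma \ref{lem:characterize}, which says a Leonard system is determined up to isomorphism by its parameter array. Thus it suffices to show that the full parameter array $(\{\th_i\}_{i=0}^d, \{\th^*_i\}_{i=0}^d, \{\vphi_i\}_{i=1}^d, \{\phi_i\}_{i=1}^d)$ is recoverable from $\beta$ and the eight end-parameters. I would organize the recovery in three stages: the second-from-the-ends eigenvalues, then the interior eigenvalues via a recurrence, then the interior split sequences.

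First I would read off $\th_1, \th_{d-1}, \th^*_1, \th^*_{d-1}$ from Lemma \ref{lem:th1ths1}, which exhibits each as an explicit rational expression in the end-parameters. Next, for $2 \leq i \leq d-2$, I would recover $\th_i$ using the third-order linear recurrence
\[
 \th_{i+1} - (\beta+1)\th_i + (\beta+1)\th_{i-1} - \th_{i-2} = 0 \qquad (2 \leq i \leq d-1)
\]
supplied by Lemma \ref{lem:classify}(v), whose characteristic polynomial is $(x-1)(x^2-\beta x+1)$. Writing $\beta = q + q^{-1}$ and splitting by type (Definition \ref{def:types}), in type I the general solution is $\th_i = \alpha + bq^i + cq^{-i}$, and imposing $\th_0, \th_1, \th_d$ gives a $3 \times 3$ linear system whose determinant factors as $q^{-d}(q-1)(q^{d-1}-1)(q^d-1)$, nonzero by Lemma \ref{lem:types}(i). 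In types II and III$^{\pm}$ the closed form degenerates to include factors of $i$ or $i(-1)^i$, but the analogous coefficient matrix remains invertible by Lemma \ref{lem:types}(ii)--(iv), which ensures that the small integers appearing (like $d$, $d-1$, $d-2$, or $2$) do not vanish in $\F$. Type IV forces $d = 3$ by Lemma \ref{lem:types}(v), so no interior eigenvalues remain after the first stage. Running the same argument in the starred sector determines $\{\th^*_i\}_{i=0}^d$.

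With all $\th_i, \th^*_i$ in hand, the third stage is immediate: Lemma \ref{lem:classify}(iii) expresses each $\vphi_i$ ($2 \le i \le d-1$) explicitly in terms of $\phi_1$, the $\th_j$, and the $\th^*_j$, all now known; Lemma \ref{lem:classify}(iv) symmetrically recovers each $\phi_i$ from $\vphi_1$. This completes the parameter array, and hence by Lemma \ref{lem:characterize} the Leonard system up to isomorphism. The main obstacle will be the middle stage: one must write down the correct closed-form solution of the recurrence for each of the five types and verify the non-singularity of the ensuing linear system. This is purely a linear-algebra exercise, but in each case it reduces to a non-vanishing statement that is precisely what Lemma \ref{lem:types} is designed to guarantee.
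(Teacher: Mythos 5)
Your proposal is correct. Note that the paper itself gives no proof of this lemma --- it is quoted from \cite[Theorem 1.9]{N:endparam} --- and your reconstruction follows essentially the same route as that source (and as the analogous derivations displayed in Appendix A): recover $\th_1,\th_{d-1},\th^*_1,\th^*_{d-1}$ from the end-parameters via Lemma \ref{lem:th1ths1}, solve the third-order recurrence from Lemma \ref{lem:classify}(v) in closed form type by type, with the nonvanishing of the relevant determinants (e.g.\ $q^{-d}(q-1)(q^{d-1}-1)(q^d-1)$ in type I, $d(d-1)$ in type II, $\pm 2d$ or $2(d-1)$ in types III$^\pm$) guaranteed by Lemma \ref{lem:types}, and then obtain $\{\vphi_i\}$, $\{\phi_i\}$ from Lemma \ref{lem:classify}(iii),(iv) and conclude by Lemma \ref{lem:characterize}.
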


We recall a relation between the end-parameters and the fundamental
parameter.

\begin{lemma}   {\rm (See \cite[Proposition 1.11]{N:endparam}.)}
\label{lem:Omega}   \samepage
\ifDRAFT {\rm lem:Omega}. \fi
Let $\beta$ be the fundamental parameter of $\Phi$,
and pick a nonzero $q \in \F$ such that $\beta = q+q^{-1}$.
Then the scalar
\begin{equation}      \label{eq:defOmega}
 \Omega = \frac{\phi_1 + \phi_d - \vphi_1 - \vphi_d}
                      {(\th_0-\th_d)(\th^*_0 - \th^*_d)}
\end{equation}
is as follows:
\[
\begin{array}{c|c}
 \text{\rm Type of $\Phi$} & \Omega 
\\ \hline
\text{\rm I} 
 & \displaystyle  \frac{ (q-1)(q^{d-1}+1)}{q^d-1}
  \rule{0mm}{7mm}
\\
\text{\rm II}
 &   2/d   \rule{0mm}{5mm}
\\
\text{\rm III$^+$}
&  2(d-1)/d   \rule{0mm}{5mm}
\\
\text{\rm III$^-$}
  &  2     \rule{0mm}{5mm}
\\
\text{\rm IV}
&  0     \rule{0mm}{5mm}
\end{array}
\]
\end{lemma}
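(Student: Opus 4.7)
The plan is to first reduce $\Omega$ to a closed expression involving only the eigenvalue sequence $\{\theta_i\}_{i=0}^d$, and then specialize to each type using the explicit form of that sequence dictated by $\beta$. To begin, I would observe that the sum $\sum_{\ell=0}^{d-1}(\theta_\ell - \theta_{d-\ell})$ appearing inside Lemma \ref{lem:classify}(iii), (iv) simplifies by antisymmetric cancellation: the terms $\theta_\ell - \theta_{d-\ell}$ and $\theta_{d-\ell} - \theta_\ell$ pair off and cancel for $1 \leq \ell \leq d-1$ (and the middle term vanishes when $d$ is even), so only the $\ell = 0$ summand $\theta_0 - \theta_d$ survives. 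Substituting $i = d$ into (iii) and (iv) therefore yields
\[
\vphi_d = \phi_1 + (\theta^*_d - \theta^*_0)(\theta_{d-1} - \theta_d), \qquad \phi_d = \vphi_1 + (\theta^*_d - \theta^*_0)(\theta_1 - \theta_0).
\]
Forming $\phi_1 + \phi_d - \vphi_1 - \vphi_d$ then cancels $\vphi_1, \phi_1$, and dividing by $(\theta_0 - \theta_d)(\theta^*_0 - \theta^*_d)$ produces the reduction
\[
\Omega = \frac{(\theta_0 - \theta_1) + (\theta_{d-1} - \theta_d)}{\theta_0 - \theta_d}.
\]

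Next I would introduce the difference sequence $\delta_i = \theta_{i-1} - \theta_i$ for $1 \leq i \leq d$, so that $\Omega = (\delta_1 + \delta_d)/(\delta_1 + \cdots + \delta_d)$. Rewriting $\theta_{i-1} - \theta_{i+2} = \delta_i + \delta_{i+1} + \delta_{i+2}$ and $\theta_i - \theta_{i+1} = \delta_{i+1}$ converts the common-value condition Lemma \ref{lem:classify}(v) into the linear recurrence $\delta_{i+2} = \beta\delta_{i+1} - \delta_i$ for $1 \leq i \leq d-2$. Its characteristic polynomial $x^2 - \beta x + 1$ has roots $q, q^{-1}$, giving the following general solutions by type: $\delta_i = Aq^i + Bq^{-i}$ in Type I, $\delta_i = A + Bi$ in Type II, $\delta_i = (-1)^i(A + Bi)$ in Types III$^\pm$, and $\delta_i = A + Bi$ in Type IV (where $\beta = 0$ in characteristic $2$ degenerates the recurrence to $\delta_{i+2} = \delta_i$).

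Then I would substitute each general solution into $\Omega = (\delta_1 + \delta_d)/\sum\delta_i$ and simplify. In Type I the numerator factors as $(q^{d-1}+1)(Aq + Bq^{-d})$, while geometric series give $\sum_{i=1}^d \delta_i = (q^d-1)(q-1)^{-1}(Aq + Bq^{-d})$; the shared factor $Aq + Bq^{-d}$ cancels (it is nonzero since $\sum\delta_i = \theta_0 - \theta_d \neq 0$ by Lemma \ref{lem:classify}(i)), yielding $(q-1)(q^{d-1}+1)/(q^d-1)$. In Type II, parallel algebra produces $2(2A + B(d+1))/(d(2A + B(d+1))) = 2/d$. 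In Types III$^\pm$, the parity of $d$ determines which of the $A$-contributions survive the alternating signs in $\sum(-1)^i(A + Bi)$, giving $2(d-1)/d$ when $d$ is even and $2$ when $d$ is odd. For Type IV, Lemma \ref{lem:types}(v) supplies $d = 3$, and in characteristic $2$ the numerator $\delta_1 + \delta_3 = 2(A + 2B) = 0$, so $\Omega = 0$.

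The main obstacle is the Type I bookkeeping: one must verify that the common factor $Aq + Bq^{-d}$ appears in both the numerator and the denominator (so the final ratio reduces to a pure function of $q$ and $d$) and then justify the cancellation. Non-vanishing of this factor follows from $\theta_0 \neq \theta_d$. Types III$^\pm$ also require $\text{\rm Char}(\F) \neq 2$ so that $d/2$ is invertible and the stated values $2(d-1)/d$, $2$ are nonzero; these restrictions are part of Definition \ref{def:types}. The remaining algebra in each case is routine but type-specific.
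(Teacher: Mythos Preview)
Your argument is correct. The reduction $\Omega = \big((\theta_0-\theta_1)+(\theta_{d-1}-\theta_d)\big)/(\theta_0-\theta_d)$ via Lemma~\ref{lem:classify}(iii),(iv) with $i=d$ is valid (and is in fact equivalent to what Lemma~\ref{lem:th1ths1} records), and the case-by-case evaluation through the recurrence $\delta_{i+1}=\beta\delta_i-\delta_{i-1}$ goes through as you describe; in each type the common factor in numerator and denominator is nonzero precisely because $\theta_0\neq\theta_d$.

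As for comparison: the paper does not prove this lemma at all but simply imports it from \cite[Proposition~1.11]{N:endparam}. Your proof is therefore a genuine addition rather than a variant of anything in the present paper. It is self-contained modulo Lemma~\ref{lem:classify} and Lemma~\ref{lem:types}, and avoids quoting the explicit closed forms for $\theta_i$ in each type (which is presumably how the cited reference proceeds); working with the difference sequence $\delta_i$ and its two-term recurrence is a clean way to bypass those formulas. One minor remark: your closing sentence about Types~III$^\pm$ needing $\text{Char}(\F)\neq 2$ ``so that $d/2$ is invertible'' is slightly off target---what you actually need is that $d$ itself (Type~III$^+$) or $2A+B(d+1)$ (Type~III$^-$) be nonzero in $\F$, and these follow from Lemma~\ref{lem:types}(iii),(iv) together with $\theta_0\neq\theta_d$, not merely from $\text{Char}(\F)\neq 2$.
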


\begin{lemma}    \label{lem:Omega0pre}  \samepage
\ifDRAFT {\rm lem:Omega0pre}. \fi
With reference to Lemma \ref{lem:Omega},
the following {\rm (i)} and {\rm (ii)} are equivalent:
\begin{itemize}
\item[\rm (i)]
$\Omega = 0$.
\item[\rm (ii)]
$\vphi_1 + \vphi_d = \phi_1 + \phi_d$.
\end{itemize}
\end{lemma}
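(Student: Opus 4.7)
The proof is essentially immediate from the definition of $\Omega$ in equation \eqref{eq:defOmega}. The plan is to observe that $\Omega$ is defined as a fraction whose numerator is exactly $\phi_1 + \phi_d - \vphi_1 - \vphi_d$, so $\Omega = 0$ is equivalent to the numerator vanishing, provided we know the denominator is nonzero.

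The only substantive step, then, is to verify that $(\th_0 - \th_d)(\th^*_0 - \th^*_d) \neq 0$. This is immediate from Lemma \ref{lem:classify}(i), which asserts that $\{\th_i\}_{i=0}^d$ are mutually distinct and $\{\th^*_i\}_{i=0}^d$ are mutually distinct; in particular $\th_0 \neq \th_d$ and $\th^*_0 \neq \th^*_d$ (using $d \geq 3 \geq 1$).

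With the denominator confirmed nonzero, the equivalence of (i) and (ii) follows by clearing the denominator: $\Omega = 0$ iff $\phi_1 + \phi_d - \vphi_1 - \vphi_d = 0$ iff $\vphi_1 + \vphi_d = \phi_1 + \phi_d$. There is no real obstacle here; the lemma is simply recording an algebraic reformulation that will be convenient for invoking the type-by-type table in Lemma \ref{lem:Omega} when proving Proposition \ref{prop:Delta0}.
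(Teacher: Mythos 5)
Your proposal is correct and matches the paper's argument, which simply says the lemma is immediate from the defining formula \eqref{eq:defOmega}; your added remark that $(\th_0-\th_d)(\th^*_0-\th^*_d)\neq 0$ by Lemma \ref{lem:classify}(i) just makes explicit the detail the paper leaves implicit.
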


\begin{proof}
Immediate from \eqref{eq:defOmega}.
\end{proof}

\begin{lemma}    \label{lem:Omega0}   \samepage
\ifDRAFT {\rm lem:Omega0}. \fi
With reference to Lemma \ref{lem:Omega},
$\Omega=0$ if and only if one of the following {\rm (i), (ii)} holds:
\begin{itemize}
\item[\rm (i)]
$\beta \neq \pm 2$ and $q^{d-1}=-1$.
\item[\rm (ii)]
$\beta = 0$ and $\text{\rm Char}(\F)=2$.
\end{itemize}
\end{lemma}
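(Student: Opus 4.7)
The plan is to do a case analysis on the type of $\Phi$ from Definition \ref{def:types}, in each case specializing the formula for $\Omega$ given in Lemma \ref{lem:Omega} and invoking Lemma \ref{lem:types} to determine when the ingredients vanish in $\F$. The five types are mutually exclusive and exhaustive, so the biconditional reduces to a finite check.

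For type I, where $\beta \neq \pm 2$, I would argue directly: the factor $q - 1$ in $\Omega = (q-1)(q^{d-1}+1)/(q^d-1)$ is nonzero because $q = 1$ would force $\beta = 2$, and the denominator is nonzero by Lemma \ref{lem:types}(i). Hence $\Omega = 0$ is equivalent to $q^{d-1} = -1$, giving case (i). For type IV, Lemma \ref{lem:Omega} records $\Omega = 0$ outright, matching case (ii). The remaining three types (II, III$^+$, III$^-$) all have $\beta \in \{\pm 2\}$ together with $\text{\rm Char}(\F) \neq 2$, so they should contribute no solutions to $\Omega = 0$; for each I would verify $\Omega \neq 0$. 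Type III$^-$ is immediate since $\Omega = 2 \neq 0$. Type II gives $\Omega = 2/d$, nonzero because $2 \neq 0$ and $d \neq 0$ by Lemma \ref{lem:types}(ii). Type III$^+$ gives $\Omega = 2(d-1)/d$; here $2 \neq 0$, $d \neq 0$ by Lemma \ref{lem:types}(iii), and one additionally needs $d - 1 \neq 0$ in $\F$.

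I expect the type III$^+$ step to be the main obstacle, because Lemma \ref{lem:types}(iii) only explicitly forbids $d$ and $d - 2$ from vanishing in $\F$, leaving the nonvanishing of $d - 1$ to be secured by an auxiliary argument drawn from the more detailed structure of the type III$^+$ parameterization. To close this gap I would either check that $\text{\rm Char}(\F) \mid d-1$ forces one of the $\vphi_i$ or $\phi_i$ to vanish, contradicting Lemma \ref{lem:classify}(ii), or invoke the explicit closed-form formulas for the parameter array in type III$^+$ from \cite[Sections 13--17]{NT:affine} to derive $d-1 \neq 0$ directly. Once this technicality is dispatched, collating the five cases yields the biconditional.
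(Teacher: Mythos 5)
Your overall route is the same as the paper's: a case analysis over the five types, reading $\Omega$ off the table in Lemma \ref{lem:Omega}, with type I yielding case (i), type IV yielding case (ii), and types II, III$^+$, III$^-$ to be shown to have $\Omega\neq 0$. The paper disposes of the latter three types in a single sentence; your handling of types I, II, III$^-$ and IV is correct and matches it.

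The type III$^+$ step that you single out is indeed the delicate point, but neither of your proposed repairs can succeed, because the fact you are trying to secure (that $d-1$ does not vanish in $\F$ for type III$^+$) actually fails. Lemma \ref{lem:types}(iii) only forces $\text{Char}(\F)=0$ or $\text{Char}(\F)>d/2$, which leaves $\text{Char}(\F)=d-1$ available when $d-1$ is prime, and Leonard systems of type III$^+$ do exist in that situation. Concretely, take $d=4$, $\F=\overline{\F}_3$, and $\alpha\in\F\setminus\F_3$; set $(\th_0,\ldots,\th_4)=(\th^*_0,\ldots,\th^*_4)=(0,\,2\alpha,\,1,\,2\alpha+2,\,2)$ and $\vphi_1=2$, and define the remaining $\vphi_i,\phi_i$ by Lemma \ref{lem:classify}(iii), (iv). One checks directly that conditions (i)--(v) of Lemma \ref{lem:classify} all hold (the resulting sequences are $\vphi=(2,\alpha+2,2,2+2\alpha)$ and $\phi=(2+\alpha,2\alpha,2\alpha,2+\alpha)$, all nonzero), so this is the parameter array of a Leonard system with $\beta=1=-2$, $\text{Char}(\F)=3\neq 2$, $d$ even, i.e.\ of type III$^+$; yet $\phi_1+\phi_4-\vphi_1-\vphi_4=0$, in agreement with $\Omega=2(d-1)/d=6/4=0$ in characteristic $3$. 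Hence no argument via Lemma \ref{lem:classify}(ii), and no appeal to the closed forms in \cite{NT:affine}, can rule out $\text{Char}(\F)\mid d-1$: in characteristic $d-1$ the type III$^+$ case genuinely produces $\Omega=0$ outside cases (i) and (ii). Note that the paper's own proof is silent here (it simply asserts $\Omega\neq 0$ for type III$^+$), so what you have located is not a dispatchable technicality but an exceptional case that the argument, as written, does not accommodate; a complete treatment must either exclude $\text{Char}(\F)=d-1$ for type III$^+$ by an additional hypothesis or add this case to the dichotomy.
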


\begin{proof}
First assume $\beta \neq \pm 2$.
Then $\Phi$ has type I.
Now by Lemma \ref{lem:Omega} $\Omega = 0$ if and only if
$q^{d-1}+1 = 0$.
Next assume $\beta = \pm 2$.
Then $\Phi$ has one of types  II, III$^+$, III$^-$, IV.
By Lemma \ref{lem:Omega}, $\Omega \neq 0$ for types II, III$^+$, III$^-$,
and $\Omega=0$ for type IV.
The result follows.
\end{proof}

\begin{proofof}{Proposition \ref{prop:Delta0}}
Follows from Lemmas \ref{lem:Omega0pre} and \ref{lem:Omega0}.
\end{proofof}

\section{Proof of Theorem \ref{thm:main1}}
\label{sec:proof1}

Fix an integer $d \geq 3$.
Let $\Phi$ be a Leonard system over $\F$ with diameter $d$
that has parameter array
$ (\{\th_i\}_{i=0}^d, \{\th^*_i\}_{i=0}^d, \{\vphi_i\}_{i=1}^d, \{\phi_i\}_{i=1}^d)$.
Let $\{a_i\}_{i=0}^d$ (resp.\ $\{a^*_i\}_{i=0}^d$) be the principal sequence 
(resp.\ dual principal sequence) of $\Phi$.
Let the scalar $\Omega$ be from \eqref{eq:defOmega}.
Define 
\begin{equation}         \label{eq:Delta}
 \Delta = (a_0 - \th_0)(a^*_0 - \th^*_d) - (a_d - \th_0)(a^*_0 - \th^*_0).
\end{equation}

\begin{lemma}    \label{lem:Delta}    \samepage
\ifDRAFT {\rm lem:Delta}. \fi
With the above notation,
\begin{equation}            \label{eq:Delta2}
 \Delta = 
  \frac{\vphi_1 \phi_d (\th_0-\th_d)(\th^*_0-\th^*_d)(\phi_1+\phi_d-\vphi_1-\vphi_d)}
         {(\phi_1-\vphi_1)(\phi_d-\vphi_1)(\phi_d-\vphi_d)}.   
\end{equation}
\end{lemma}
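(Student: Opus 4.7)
The plan is direct: the formula for $\Delta$ in Lemma \ref{lem:Delta} is a purely algebraic identity between the end-entries $a_0$, $a_d$, $a^*_0$ and the end-parameters, so I will simply substitute the expressions from Lemma \ref{lem:key} into the definition \eqref{eq:Delta} and simplify. The four relevant expressions from Lemma \ref{lem:key} are
\[
 a_0-\th_0=\frac{\vphi_1(\th_0-\th_d)}{\phi_1-\vphi_1},\qquad
 a_d-\th_0=\frac{\phi_d(\th_0-\th_d)}{\vphi_d-\phi_d},
\]
\[
 a^*_0-\th^*_0=\frac{\vphi_1(\th^*_0-\th^*_d)}{\phi_d-\vphi_1},\qquad
 a^*_0-\th^*_d=\frac{\phi_d(\th^*_0-\th^*_d)}{\phi_d-\vphi_1}.
\]

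First I would form the two products $(a_0-\th_0)(a^*_0-\th^*_d)$ and $(a_d-\th_0)(a^*_0-\th^*_0)$. A pleasant feature is that both products have the same ``global'' numerator $\vphi_1\phi_d(\th_0-\th_d)(\th^*_0-\th^*_d)$; the only difference is in the denominators, which are $(\phi_1-\vphi_1)(\phi_d-\vphi_1)$ and $(\vphi_d-\phi_d)(\phi_d-\vphi_1)$ respectively. Factoring $\vphi_1\phi_d(\th_0-\th_d)(\th^*_0-\th^*_d)/(\phi_d-\vphi_1)$ out of $\Delta$ therefore reduces the problem to combining $\tfrac{1}{\phi_1-\vphi_1}-\tfrac{1}{\vphi_d-\phi_d}$ over a common denominator.

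Placing this difference over the common denominator $(\phi_1-\vphi_1)(\vphi_d-\phi_d)$ produces the numerator $(\vphi_d-\phi_d)-(\phi_1-\vphi_1)=-(\phi_1+\phi_d-\vphi_1-\vphi_d)$, and the factor $\vphi_d-\phi_d$ in the denominator equals $-(\phi_d-\vphi_d)$. These two minus signs cancel, yielding exactly the right-hand side of \eqref{eq:Delta2}. Nonvanishing of all denominators encountered in this manipulation is guaranteed by Lemma \ref{lem:classify}(i) and Lemma \ref{lem:vphi-phi}, so the substitution is legitimate throughout.

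There is no genuine obstacle: the computation is two lines of arithmetic once the substitutions from Lemma \ref{lem:key} are made. The only thing to watch is sign bookkeeping when interchanging $\phi_d-\vphi_d$ and $\vphi_d-\phi_d$, and when writing $\phi_1+\phi_d-\vphi_1-\vphi_d$ (as it appears in \eqref{eq:defOmega}) versus its negative that arises naturally from $\tfrac{1}{\phi_1-\vphi_1}-\tfrac{1}{\vphi_d-\phi_d}$.
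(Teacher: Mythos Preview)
Your proof is correct and is exactly the approach the paper takes: the paper's proof is the single line ``Routine verification using Lemma \ref{lem:key},'' and you have spelled out precisely that verification. The sign bookkeeping and the check that all denominators are nonzero are handled correctly.
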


\begin{proof}
Routine verification using Lemma \ref{lem:key}.
\end{proof}

\begin{lemma}   \label{lem:Delta0}   \samepage
\ifDRAFT {\rm lem:Delta0}. \fi
With the above notation, the following {\rm (i)--(ii)} are equivalent:
\begin{itemize}
\item[\rm (i)]
$\Delta=0$.
\item[\rm (ii)]
$\vphi_1 + \vphi_d = \phi_1 + \phi_d$.
\end{itemize}
\end{lemma}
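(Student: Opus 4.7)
The plan is to read this off directly from Lemma \ref{lem:Delta}. By that lemma,
\[
 \Delta =
  \frac{\vphi_1 \phi_d (\th_0-\th_d)(\th^*_0-\th^*_d)(\phi_1+\phi_d-\vphi_1-\vphi_d)}
         {(\phi_1-\vphi_1)(\phi_d-\vphi_1)(\phi_d-\vphi_d)},
\]
so the entire content of the lemma is the observation that every factor in this expression other than $\phi_1+\phi_d-\vphi_1-\vphi_d$ is nonzero, whence $\Delta=0$ if and only if $\phi_1+\phi_d-\vphi_1-\vphi_d=0$, which is exactly condition (ii).

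First I would check the denominator. The three factors $\phi_1-\vphi_1$, $\phi_d-\vphi_1$, $\phi_d-\vphi_d$ are all nonzero by Lemma \ref{lem:vphi-phi}; note that the denominator is already implicitly assumed nonzero since Lemma \ref{lem:Delta} writes $\Delta$ as this quotient. Next I would check the four ``prefactor'' entries in the numerator. The scalars $\vphi_1$ and $\phi_d$ are nonzero by Lemma \ref{lem:classify}(ii), and $\th_0\neq\th_d$, $\th^*_0\neq\th^*_d$ by Lemma \ref{lem:classify}(i) (using $d\geq 1$, which holds since $d\geq 3$). Consequently the rational factor
\[
 \frac{\vphi_1 \phi_d (\th_0-\th_d)(\th^*_0-\th^*_d)}
         {(\phi_1-\vphi_1)(\phi_d-\vphi_1)(\phi_d-\vphi_d)}
\]
is a nonzero element of $\F$, and the claimed equivalence (i)$\Leftrightarrow$(ii) follows upon rearranging $\phi_1+\phi_d-\vphi_1-\vphi_d=0$ as $\vphi_1+\vphi_d=\phi_1+\phi_d$.

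There is essentially no obstacle: once Lemma \ref{lem:Delta} is available, the nontrivial computational work has already been done, and only the nonvanishing of the auxiliary factors remains to be cited. The proof should be one or two sentences.
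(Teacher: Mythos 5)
Your proposal is correct and matches the paper's own argument: the paper's proof of Lemma \ref{lem:Delta0} also reads the equivalence off directly from \eqref{eq:Delta2}, citing Lemma \ref{lem:classify}(i), (ii) for nonvanishing of the auxiliary factors (with Lemma \ref{lem:vphi-phi} handling the denominators, as you note). No gaps; your write-up is simply a slightly more explicit version of the same one-line argument.
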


\begin{proof}
Follows from  \eqref{eq:Delta2} and Lemma \ref{lem:classify}(i), (ii).
\end{proof}

Consider the following expressions:
\begin{align*}
 \Gamma_1 &= (a_0-\th_0)(a_d-\th_0)(a^*_0-\th^*_0) (\th^*_0-\th^*_d),
\\
 \Gamma_2 &= (a_0-\th_0)(a_d-\th_d)(a^*_0-\th^*_d) (\th^*_0-\th^*_d),
\\
 \Gamma_3 &= (a_0-\th_d)(a_d-\th_0)(a^*_0-\th^*_0) (\th^*_0-\th^*_d), 
\\
 \Gamma_4 &= (a_0-\th_0)(a_d-\th_0)(a^*_0-\th^*_d) (\th^*_0-\th^*_d). 
\end{align*}

\begin{lemma}    \label{lem:DelGa}    \samepage
\ifDRAFT {\rm lem:DelGa}. \fi
With the above notation,
\begin{align}
 \Gamma_1 &= 
 - \frac{\vphi_1^2 \phi_d (\th_0-\th_d)^2 (\th^*_0-\th^*_d)^2}
           {(\phi_1-\vphi_1)(\phi_d-\vphi_1)(\phi_d-\vphi_d)},      \notag
\\
 \Gamma_2 &= 
 - \frac{\vphi_1 \vphi_d \phi_d (\th_0-\th_d)^2 (\th^*_0-\th^*_d)^2}
           {(\phi_1-\vphi_1)(\phi_d-\vphi_1)(\phi_d-\vphi_d)},    \notag
\\
 \Gamma_3 &=
 - \frac{\vphi_1 \phi_1 \phi_d (\th_0-\th_d)^2 (\th^*_0-\th^*_d)^2}
           {(\phi_1-\vphi_1)(\phi_d-\vphi_1)(\phi_d-\vphi_d)},       \notag
\\
 \Gamma_4 &=
 - \frac{\vphi_1\phi_d^2 (\th_0-\th_d)^2 (\th^*_0-\th^*_d)^2}
           {(\phi_1-\vphi_1)(\phi_d-\vphi_1)(\phi_d-\vphi_d)}.     \notag
\end{align}
\end{lemma}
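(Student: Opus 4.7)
The proof plan here is straightforward substitution and simplification. Lemma \ref{lem:key} expresses each of the eight basic differences $a_0-\th_0$, $a_0-\th_d$, $a_d-\th_0$, $a_d-\th_d$, $a^*_0-\th^*_0$, $a^*_0-\th^*_d$, $a^*_d-\th^*_0$, $a^*_d-\th^*_d$ as a rational function in $\vphi_1$, $\vphi_d$, $\phi_1$, $\phi_d$, $\th_0-\th_d$, and $\th^*_0-\th^*_d$. Each of $\Gamma_1, \Gamma_2, \Gamma_3, \Gamma_4$ is a product of three such differences together with the factor $\th^*_0-\th^*_d$, so the plan is simply to substitute and collect terms.

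First I would substitute the appropriate formulas from Lemma \ref{lem:key} into each $\Gamma_k$. For example, in $\Gamma_1$ I use $a_0-\th_0 = \vphi_1(\th_0-\th_d)/(\phi_1-\vphi_1)$, $a_d-\th_0 = \phi_d(\th_0-\th_d)/(\vphi_d-\phi_d)$, and $a^*_0-\th^*_0 = \vphi_1(\th^*_0-\th^*_d)/(\phi_d-\vphi_1)$. Multiplying these three together with $\th^*_0-\th^*_d$ gives a product whose numerator is $\vphi_1^2\phi_d(\th_0-\th_d)^2(\th^*_0-\th^*_d)^2$ and whose denominator is $(\phi_1-\vphi_1)(\vphi_d-\phi_d)(\phi_d-\vphi_1)$. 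Rewriting $\vphi_d-\phi_d = -(\phi_d-\vphi_d)$ extracts the overall minus sign, matching the claimed formula. The computations for $\Gamma_2$, $\Gamma_3$, $\Gamma_4$ are entirely analogous: the denominator $(\phi_1-\vphi_1)(\phi_d-\vphi_1)(\phi_d-\vphi_d)$ (with its sign) arises the same way in each case, and only the numerator factors change.

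There is no real obstacle here; the only thing to be careful about is the bookkeeping of which of the two formulas on each row of Lemma \ref{lem:key} to apply, and the single sign flip from $\vphi_d-\phi_d$. Note also that by Lemma \ref{lem:classify}(ii) and Lemma \ref{lem:vphi-phi}, all denominators appearing are nonzero, so these manipulations are legitimate.
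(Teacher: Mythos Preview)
Your proposal is correct and matches the paper's own proof exactly: the paper simply says ``Routine verification using Lemma \ref{lem:key},'' which is precisely the substitution-and-simplification you describe, including the sign flip from $\vphi_d-\phi_d = -(\phi_d-\vphi_d)$.
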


\begin{proof}
Routine verification using Lemma \ref{lem:key}.
\end{proof}

\begin{lemma}     \label{lem:DeltaGamma}    \samepage
\ifDRAFT {\rm lem:DeltaGamma}. \fi
With the above notation,
\[
          \frac{\Omega \Gamma_1}{\vphi_1}
          = \frac{\Omega \Gamma_2}{\vphi_d}
          = \frac{\Omega \Gamma_3}{\phi_1}
          = \frac{\Omega \Gamma_4}{\phi_d}
          = - \Delta.
\]
\end{lemma}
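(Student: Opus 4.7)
The statement is a closed-form identity, so the plan is a direct substitution rather than a structural argument. I would simply read off the formulas supplied by the preceding lemmas and check that all four quotients equal $-\Delta$.

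First I would record that, by definition \eqref{eq:defOmega},
\[
\Omega(\th_0-\th_d)(\th^*_0-\th^*_d) = \phi_1+\phi_d-\vphi_1-\vphi_d.
\]
The point of this rewriting is that each $\Gamma_i$ in Lemma \ref{lem:DelGa} carries a factor $(\th_0-\th_d)^2(\th^*_0-\th^*_d)^2$, so multiplying by $\Omega$ absorbs one copy of $(\th_0-\th_d)(\th^*_0-\th^*_d)$ and replaces it by $(\phi_1+\phi_d-\vphi_1-\vphi_d)$, producing exactly the numerator of $\Delta$ displayed in \eqref{eq:Delta2}, up to a multiplicative factor.

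Second I would observe the uniform shape of the $\Gamma_i$: they share the common prefactor
\[
-\frac{\phi_d(\th_0-\th_d)^2(\th^*_0-\th^*_d)^2}{(\phi_1-\vphi_1)(\phi_d-\vphi_1)(\phi_d-\vphi_d)},
\]
and differ only in a single extra factor, which is $\vphi_1^2,\ \vphi_1\vphi_d,\ \vphi_1\phi_1,\ \vphi_1\phi_d$ for $\Gamma_1,\Gamma_2,\Gamma_3,\Gamma_4$, respectively. Thus $\Gamma_1/\vphi_1,\ \Gamma_2/\vphi_d,\ \Gamma_3/\phi_1,\ \Gamma_4/\phi_d$ all collapse to the same quantity, namely
\[
-\frac{\vphi_1\phi_d(\th_0-\th_d)^2(\th^*_0-\th^*_d)^2}{(\phi_1-\vphi_1)(\phi_d-\vphi_1)(\phi_d-\vphi_d)}.
\]
Multiplying by $\Omega$ and invoking the boxed identity above converts one $(\th_0-\th_d)(\th^*_0-\th^*_d)$ factor into $(\phi_1+\phi_d-\vphi_1-\vphi_d)$, giving precisely $-\Delta$ as in Lemma \ref{lem:Delta}.

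There is no real obstacle here; the only thing to be careful about is bookkeeping of the single-power versus squared factors of $(\th_0-\th_d)$ and $(\th^*_0-\th^*_d)$, and keeping track of the minus sign. A one-line proof of the form ``Immediate from Lemma \ref{lem:Omega}, Lemma \ref{lem:Delta} and Lemma \ref{lem:DelGa}'' would suffice; if slightly more detail is desired, I would display the common simplification
\[
\frac{\Omega\,\Gamma_i}{x_i} \;=\; -\frac{\vphi_1\phi_d(\th_0-\th_d)(\th^*_0-\th^*_d)(\phi_1+\phi_d-\vphi_1-\vphi_d)}{(\phi_1-\vphi_1)(\phi_d-\vphi_1)(\phi_d-\vphi_d)},
\]
valid for $(i,x_i)\in\{(1,\vphi_1),(2,\vphi_d),(3,\phi_1),(4,\phi_d)\}$, and note that the right-hand side equals $-\Delta$ by \eqref{eq:Delta2}.
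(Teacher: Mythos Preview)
Your proposal is correct and matches the paper's own proof, which reads simply ``Follows from \eqref{eq:defOmega}, \eqref{eq:Delta2} and Lemma \ref{lem:DelGa}.'' Your expanded version spells out exactly the substitution the paper leaves implicit.
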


\begin{proof}
Follows from \eqref{eq:defOmega}, \eqref{eq:Delta2} and Lemma \ref{lem:DelGa}.
\end{proof}

\begin{lemma}    \label{lem:vphiphi}   \samepage
\ifDRAFT {\rm lem:vphiphi}. \fi
With the above notation, assume $\Delta \neq 0$.
Then
\begin{align*}
 \vphi_1 &= - \frac{\Omega \Gamma_1}{\Delta},  &
 \vphi_d &= - \frac{\Omega \Gamma_2}{\Delta},  &
 \phi_1  &= - \frac{\Omega \Gamma_3}{\Delta},  &
 \phi_d  &= - \frac{\Omega \Gamma_4}{\Delta}.
\end{align*}
\end{lemma}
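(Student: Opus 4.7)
The plan is to derive Lemma \ref{lem:vphiphi} as an immediate algebraic consequence of Lemma \ref{lem:DeltaGamma}. By Lemma \ref{lem:classify}(ii), each of $\vphi_1$, $\vphi_d$, $\phi_1$, $\phi_d$ is nonzero, so the equalities in Lemma \ref{lem:DeltaGamma} are well-posed. From Lemma \ref{lem:DeltaGamma} we read off
\[
  \Omega\,\Gamma_1 = -\Delta\,\vphi_1, \qquad
  \Omega\,\Gamma_2 = -\Delta\,\vphi_d, \qquad
  \Omega\,\Gamma_3 = -\Delta\,\phi_1,  \qquad
  \Omega\,\Gamma_4 = -\Delta\,\phi_d.
\]
Under the hypothesis $\Delta \neq 0$, I would divide each of these four identities by $-\Delta$ to obtain the four claimed formulas for $\vphi_1$, $\vphi_d$, $\phi_1$, $\phi_d$ respectively.

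There is no real obstacle here, since the content is entirely bookkeeping on top of Lemma \ref{lem:DeltaGamma}. The only thing worth noting is that the statement is consistent: if $\Delta \neq 0$, then by Lemma \ref{lem:Delta0} we have $\vphi_1+\vphi_d \neq \phi_1+\phi_d$, and in particular the scalar $\Omega$ is nonzero by Lemma \ref{lem:Omega0pre} together with \eqref{eq:defOmega}, so the right-hand sides $-\Omega \Gamma_i/\Delta$ are genuinely nonzero as the $\vphi_i, \phi_i$ should be. Thus the proposal reduces to a single display invoking Lemma \ref{lem:DeltaGamma} and then dividing by $-\Delta$.
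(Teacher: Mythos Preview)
Your proposal is correct and matches the paper's own proof, which simply says ``Immediate from Lemma \ref{lem:DeltaGamma}.'' Your additional remark that $\Omega \neq 0$ under the hypothesis $\Delta \neq 0$ is a nice consistency check, though it is not needed for the derivation itself.
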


\begin{proof}
Immediate from Lemma \ref{lem:DeltaGamma}.
\end{proof}

\begin{proofof}{Theorem \ref{thm:main1}}
Assume $\vphi_1 + \vphi_d \neq \phi_1 + \phi_d$.
Note that $\Delta \neq 0$ by Lemma \ref{lem:Delta0}.
Observe that each of $\Delta$, $\Gamma_1$, $\Gamma_2$, $\Gamma_3$, $\Gamma_4$
is determined by the end-entries.
By this and Lemma \ref{lem:vphiphi} each of $\vphi_1$, $\vphi_d$, $\phi_1$, $\phi_d$
is determined by the end-entries and $\Omega$.
By Lemma \ref{lem:Omega} $\Omega$ is determined by the fundamental
parameter $\beta$.
By these comments, the end-parameters are determined by the end-entries and
$\beta$.
Now the result follows by Lemma \ref{lem:endparam}.
\end{proofof}

\section{A lemma}
\label{sec:Delta0}

Fix an integer $d \geq 3$.
Let $\Phi$ be a Leonard system over $\F$ with diameter $d$
that has parameter array
$ (\{\th_i\}_{i=0}^d, \{\th^*_i\}_{i=0}^d$, $\{\vphi_i\}_{i=1}^d, \{\phi_i\}_{i=1}^d)$.
Let $\{a_i\}_{i=0}^d$ (resp.\ $\{a^*_i\}_{i=0}^d$) be the principal sequence 
(resp.\ dual principal sequence) of $\Phi$.

\begin{lemma}    \label{lem:Delta0relations}    \samepage
\ifDRAFT {\rm lem:Delta0relations}. \fi
Assume $\vphi_1+\vphi_d = \phi_1 + \phi_d$. Then
\begin{align}
 \frac{a_d-\th_0}{a_0-\th_0} &= \frac{a^*_0-\th^*_d}{a^*_0-\th^*_0},     \label{eq:a0th0/adth0}
\\
 \frac{a_0-\th_d}{a_0-\th_0} &= \frac{a^*_d-\th^*_0}{a^*_0-\th^*_0},    \label{eq:a0thd/a0th0}
\\
 \frac{a_d-\th_d}{a_0-\th_0} &= \frac{a^*_d-\th^*_d}{a^*_0-\th^*_0},   \label{eq:a0th0/adthd}
\\
 \frac{a_0-a_d}{a_0-\th_0} &= \frac{\th^*_d-\th^*_0}{a^*_0-\th^*_0}.   \label{eq:a0ad/a0th0}
\end{align}
\end{lemma}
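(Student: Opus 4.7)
\medskip\noindent\textbf{Proof proposal for Lemma \ref{lem:Delta0relations}.}
The plan is to verify each of the four identities by direct substitution from Lemma \ref{lem:key}, noting that the first one is already contained in earlier work and the remaining three follow from the same pattern. Specifically, Lemma \ref{lem:key} expresses every quantity $a_0-\th_0$, $a_0-\th_d$, $a_d-\th_0$, $a_d-\th_d$, $a^*_0-\th^*_0$, $a^*_0-\th^*_d$, $a^*_d-\th^*_0$, $a^*_d-\th^*_d$ as the product of $(\th_0-\th_d)$ or $(\th^*_0-\th^*_d)$ with a simple rational expression in $\vphi_1, \vphi_d, \phi_1, \phi_d$. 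Consequently each side of each of \eqref{eq:a0th0/adth0}--\eqref{eq:a0th0/adthd} becomes, after the $\th$-differences cancel, a rational expression purely in $\vphi_1, \vphi_d, \phi_1, \phi_d$, whose equality is equivalent to the hypothesis $\vphi_1+\vphi_d=\phi_1+\phi_d$.

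First, I would dispose of \eqref{eq:a0th0/adth0}. By Lemma \ref{lem:Delta0} the hypothesis is equivalent to $\Delta=0$, and the definition \eqref{eq:Delta} of $\Delta$ rearranges immediately to $(a_d-\th_0)(a^*_0-\th^*_0)=(a_0-\th_0)(a^*_0-\th^*_d)$, which is \eqref{eq:a0th0/adth0} (the denominators are nonzero by Lemma \ref{lem:a0neqth0}).

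Next, for \eqref{eq:a0thd/a0th0} I would apply Lemma \ref{lem:key} directly: the left side reduces to $\phi_1/\vphi_1$, while the right side reduces to $\phi_1(\phi_d-\vphi_1)/\bigl(\vphi_1(\vphi_d-\phi_1)\bigr)$, and the equality collapses to $\phi_d-\vphi_1=\vphi_d-\phi_1$, i.e., exactly $\vphi_1+\vphi_d=\phi_1+\phi_d$. The same strategy handles \eqref{eq:a0th0/adthd}, where both sides simplify to rational expressions in $\vphi_1,\vphi_d,\phi_1,\phi_d$ whose cross-multiplied identity reduces, after using $\phi_1-\vphi_1=\vphi_d-\phi_d$ once, to the other rearrangement $\vphi_d-\phi_1=\phi_d-\vphi_1$ of the hypothesis.

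For \eqref{eq:a0ad/a0th0} I would first combine $a_0-a_d=(a_0-\th_0)-(a_d-\th_0)$, substitute from Lemma \ref{lem:key}, and extract a common factor of $(\th_0-\th_d)/\bigl((\phi_1-\vphi_1)(\vphi_d-\phi_d)\bigr)$, leaving $\vphi_1\vphi_d-\phi_1\phi_d$ in the numerator. Dividing by $(a_0-\th_0)$ gives $(\vphi_1\vphi_d-\phi_1\phi_d)/\bigl(\vphi_1(\vphi_d-\phi_d)\bigr)$, while the right-hand side simplifies to $(\vphi_1-\phi_d)/\vphi_1$. Clearing the common $\vphi_1$ and cross-multiplying, the desired identity reduces to $-\phi_1\phi_d=-\vphi_1\phi_d-\vphi_d\phi_d+\phi_d^{\,2}$, that is, $\vphi_1+\vphi_d=\phi_1+\phi_d$, which is the hypothesis.

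There is no conceptual obstacle here; the computation is routine and, at each step, every denominator that appears is nonzero by Lemmas \ref{lem:classify}(i),(ii), \ref{lem:a0neqth0}, and \ref{lem:vphi-phi}. The only mild care needed is to keep track of which rearrangement of the hypothesis ($\phi_1-\vphi_1=\vphi_d-\phi_d$ versus $\phi_d-\vphi_1=\vphi_d-\phi_1$) is used in each identity, so that no circularity or hidden division by a potentially vanishing quantity occurs.
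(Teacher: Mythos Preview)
Your proof is correct: every identity does indeed reduce, via Lemma \ref{lem:key}, to one of the two rearrangements $\phi_1-\vphi_1=\vphi_d-\phi_d$ or $\phi_d-\vphi_1=\vphi_d-\phi_1$ of the hypothesis, and all denominators you divide by are covered by Lemmas \ref{lem:classify}(i),(ii), \ref{lem:a0neqth0}, \ref{lem:vphi-phi}.

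The paper's argument is organized differently. It proves \eqref{eq:a0th0/adth0} exactly as you do (from $\Delta=0$), but then obtains \eqref{eq:a0thd/a0th0} by applying the $D_4$ involution $*$ to \eqref{eq:a0th0/adth0}, derives \eqref{eq:a0th0/adthd} by combining \eqref{eq:a0th0/adth0} with the restriction identity \eqref{eq:restriction} of Proposition \ref{prop:restriction} and then using \eqref{eq:a0thd/a0th0}, and finally gets \eqref{eq:a0ad/a0th0} by subtracting \eqref{eq:a0th0/adthd} from \eqref{eq:a0th0/adth0} and applying $*$ once more. In other words, the paper exploits the $D_4$ symmetry and the already-established relation \eqref{eq:restriction} so that only the first identity requires the hypothesis explicitly; the rest are formal consequences. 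Your approach trades that structural economy for four independent direct verifications from Lemma \ref{lem:key}. Both are short; the paper's route makes the role of the $*$-symmetry visible, while yours makes each identity self-contained and shows concretely which rearrangement of the hypothesis is at work.
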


\begin{proof}
Let the scalar $\Delta$ be from \eqref{eq:Delta}.
By Lemma \ref{lem:Delta0} $\Delta=0$, and \eqref{eq:a0th0/adth0} follows.
Line \eqref{eq:a0thd/a0th0} is obtained from \eqref{eq:a0th0/adth0} by 
applying $*$.
Combining \eqref{eq:restriction} and \eqref{eq:a0th0/adth0},
\[
   \frac{a_d - \th_d}{a_0 - \th_d} = \frac{a^*_d - \th^*_d}{a^*_d - \th^*_0}.
\]
By this and \eqref{eq:a0thd/a0th0} we get \eqref{eq:a0th0/adthd}.
By \eqref{eq:a0th0/adth0} minus \eqref{eq:a0th0/adthd},
\[
     \frac{\th_d - \th_0}{a_0-\th_0} = \frac{a^*_0-a^*_d}{a^*_0-\th^*_0}.
\]
Applying $*$ to this, we obtain \eqref{eq:a0ad/a0th0}.
\end{proof}

\section{Proof of Theorem \ref{thm:main2}; part 1}
\label{sec:proof2(i)}

In this and the next section we prove Theorem \ref{thm:main2}.
Fix an integer $d \geq 3$.
Let $\Phi$ be a Leonard system over $\F$ with diameter $d$, and let
$(\{\th_i\}_{i=0}^d, \{\th^*_i\}_{i=0}^d, \{\vphi_i\}_{i=1}^d, \{\phi_i\}_{i=1}^d)$
be the parameter array of $\Phi$.
Let $\beta$ be the fundamental parameter of $\Phi$,
and pick a nonzero $q \in \F$ such that $\beta=q+q^{-1}$.
Let $\{a_i\}_{i=0}^d$ (resp.\ $\{a^*_i\}_{i=0}^d$) be the principal sequence (resp.\ duall
principal sequence) of $\Phi$.
Assume $\vphi_1 + \vphi_d = \phi_1 + \phi_d$.
By Proposition \ref{prop:Delta0} we have two cases:
\begin{itemize}
\item[] Case (i): 
$\Phi$ has type I and $q^{d-1}=-1$.
\item[] Case (ii):
$\Phi$ has type IV.
\end{itemize}
In this section we consider Case (i).
Note that $q^i \neq 1$ for $1 \leq i \leq d$ by Lemma \ref{lem:types}(i).
Also note that $\text{Char}(\F) \neq 2$;
otherwise $q^{d-1}=1$.

For a nonzero scalar $\zeta \in \F$ we define a sequence 
\[
  \tilde{p}(\zeta) =
  (\{\tth_i(\zeta)\}_{i=0}^d, \{\tth^*_i(\zeta)\}_{i=0}^d, 
\{\tvphi_i(\zeta) \}_{i=1}^d, \{\tphi_i(\zeta)\}_{i=1}^d)
\]
as follows.
For $0 \leq i \leq d$ define
\begin{align}
 L_i (\zeta) &= 
  (q+1)(q^{i-1}+1) \zeta - (q-1)(q^{i-1}-1)(a^*_0-\th^*_0)(\th_0-\th_d),   \label{eq:Li}                                  
\\
 K_i (\zeta) &= - \frac{q^i-1} {2 q^{i-1} (q^2-1)(a^*_0-\th^*_0)} L_i (\zeta).  \label{eq:Ki}
\end{align}
For $0 \leq i \leq d$ define $L^*_i (\zeta)$ and $K^*_i (\zeta)$ by viewing $\zeta^* = \zeta$:
\begin{align}
 L^*_i (\zeta) &=
   (q+1)(q^{i-1}+1) \zeta - (q-1)(q^{i-1}-1)(a_0-\th_0)(\th^*_0 - \th^*_d),  \label{eq:Lsi}
\\
 K^*_i (\zeta) &=
   - \frac{q^i-1}{2 q^{i-1} (q^2-1)(a_0-\th_0)} \, L^*_i (\zeta).                  \label{eq:Ksi}
\end{align}
For $0 \leq i \leq d$ define $L^\Downarrow_i (\zeta)$ and $K^\Downarrow_i (\zeta)$
by viewing $\zeta^\Downarrow = (a_0-\th_d) \zeta / (a_0-\th_0)$:
\begin{align}
 L^\Downarrow_i (\zeta) &= 
   \frac{(q+1)(q^{i-1}+1)(a_0-\th_d)}
          {a_0 - \th_0}  \, \zeta 
  + (q-1)(q^{i-1}-1)(a^*_d-\th^*_0)(\th_0-\th_d),                                \label{eq:LDi}
\\
 K^\Downarrow_i (\zeta) 
   &= - \frac{q^i-1} {2 q^{i-1} (q^2-1)(a^*_d-\th^*_0)} L^\Downarrow_i (\zeta).   \label{eq:KDi}
\end{align}
Now for $0 \leq i \leq d$ define
\begin{align}
 \tth_i (\zeta) &= \th_0 + K_i (\zeta),  &
 \tth^*_i (\zeta) &= \th^*_0 + K^*_i (\zeta),                              \label{eq:typeItthitthsi}
\end{align}
and for $1 \leq i \leq d$ define
\begin{align}
 \tvphi_i (\zeta) 
  &= K^{\Downarrow}_{d-i+1} (\zeta) K^*_i (\zeta)
     + \frac{(q^i-1)(q^{i-2}+1) (a_0-\th_d)} {q^{i-2} (q^2-1) (a_0-\th_0)} \, \zeta,   \label{eq:typeItvphi}
\\
 \phi_i (\zeta)
  &= K_{d-i+1}(\zeta) K^*_i (\zeta)
      + \frac{(q^i-1) (q^{i-2}+1)}{q^{i-2}(q^2-1)} \, \zeta.                         \label{eq:typeItphi}
\end{align}
We have defined $\tilde{p}(\zeta)$.
We prove the following result:

\begin{proposition}   \label{prop:typeI}   \samepage
\ifDRAFT {\rm prop:typeI}. \fi
The sequence $\tilde{p}(\zeta)$ is a parameter array  over $\F$
for infinitely many values of $\zeta$.
Assume $\tilde{p} (\zeta)$ is the parameter array of a Leonard system $\tilde{\Phi}(\zeta)$.
Then $\tilde{\Phi}(\zeta)$ has the same fundamental parameter and the
same end-entries as $\Phi$.
\end{proposition}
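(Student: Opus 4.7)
The plan consists of two halves, corresponding to the two assertions of Proposition~\ref{prop:typeI}.

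\emph{Half 1: $\tilde{p}(\zeta)$ is a parameter array for all but finitely many $\zeta$.} I would verify conditions (i)--(v) of Lemma~\ref{lem:classify}. The key organizing observation is that at $\zeta = \vphi_1$ the sequence $\tilde{p}(\zeta)$ collapses to the parameter array of $\Phi$ itself: since $\vphi_1 = (a^*_0-\th^*_0)(\th_0-\th_1)$ by Lemma~\ref{lem:a0th0}, one checks $K_1(\vphi_1) = -(\th_0-\th_1)$ and hence $\tth_1(\vphi_1) = \th_1$, with analogous reductions for all remaining $\tth_i$, $\tth^*_i$, $\tvphi_i$, $\tphi_i$ at $\zeta = \vphi_1$. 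Condition (v) is independent of $\zeta$: a direct expansion of \eqref{eq:Li}--\eqref{eq:Ki} and \eqref{eq:typeItthitthsi} shows that $\tth_i(\zeta) - \tth_0(\zeta)$ is $\zeta$-affine with $q^{i-1}$-coefficients of exactly the right form to make the ratio $(\tth_{i-2}(\zeta)-\tth_{i+1}(\zeta))/(\tth_{i-1}(\zeta)-\tth_i(\zeta))$ equal to $q+q^{-1}+1$ for all $i$ and $\zeta$, and likewise for the starred ratio. Conditions (iii) and (iv) are polynomial identities of bounded degree in $\zeta$; they hold at $\zeta = \vphi_1$ (where the array is that of $\Phi$) and can be verified at a few additional test values or by direct formal expansion. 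Conditions (i), (ii) exclude only the zero sets of finitely many polynomials in $\zeta$, each of which is nonzero at $\zeta = \vphi_1$, so only finitely many values of $\zeta$ are excluded.

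\emph{Half 2: same fundamental parameter and end-entries.} The common ratio computed in (v) is $q+q^{-1}+1$, so the fundamental parameter of $\tilde{\Phi}(\zeta)$ equals $\beta$. For the $\theta$-end-entries, the factor $q^0-1=0$ in \eqref{eq:Ki}, \eqref{eq:Ksi} forces $K_0(\zeta) = K^*_0(\zeta) = 0$, giving $\tth_0(\zeta) = \th_0$, $\tth^*_0(\zeta) = \th^*_0$. The hypothesis $q^{d-1}=-1$ (which comes from Case~(i) via Proposition~\ref{prop:Delta0}) makes the $\zeta$-coefficient $(q+1)(q^{d-1}+1)$ in each of $L_d$, $L^*_d$, $L^\Downarrow_d$ vanish, so $K_d(\zeta)$, $K^*_d(\zeta)$, $K^\Downarrow_d(\zeta)$ are $\zeta$-independent; using also $q^d-1 = -(q+1)$ and $q^{d-1}-1 = -2$, one obtains $K_d(\zeta) = \th_d-\th_0$, $K^*_d(\zeta) = \th^*_d-\th^*_0$, and $K^\Downarrow_d(\zeta) = \th_0-\th_d$, whence $\tth_d(\zeta) = \th_d$ and $\tth^*_d(\zeta) = \th^*_d$. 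For the four $a$-type end-entries I apply Lemma~\ref{lem:key2} to $\tilde{\Phi}(\zeta)$: since the $\theta$'s coincide with those of $\Phi$, the equality $\ta_0(\zeta) = a_0$ reduces via Lemma~\ref{lem:a0th0/a0thd} to the $\zeta$-independent identity $\tvphi_1(\zeta)/\tphi_1(\zeta) = \vphi_1/\phi_1$, and similarly for $\ta_d$, $\ta^*_0$, $\ta^*_d$. A short calculation using \eqref{eq:typeItvphi}, \eqref{eq:typeItphi} together with the computed values of $K_d$, $K^*_d$, $K^\Downarrow_d$ yields $\tvphi_1(\zeta) = \zeta$ and $\tphi_1(\zeta) = \zeta(a_0-\th_d)/(a_0-\th_0)$, giving the first ratio as required; the remaining three ratios follow by parallel computation at $i=d$ using the evaluations of $K^*_d$ and $K^\Downarrow_1$.

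\emph{Main obstacle.} The principal difficulty is the bookkeeping of the $q$-factors. The construction works precisely because of a tight interplay: the hypothesis $q^{d-1}=-1$ makes $(q+1)(q^{d-1}+1)=0$, killing the $\zeta$-dependence exactly at $i=d$ and thereby freezing the end-entries, while the nonvanishing of $q^i-1$ for $1 \le i \le d$ keeps the rest of the array nondegenerate. The $\Downarrow$-twisted scalar $\zeta^\Downarrow = (a_0-\th_d)\zeta/(a_0-\th_0)$ built into $L^\Downarrow_i$ must also be tracked carefully, since it produces the cancellations that make $\tvphi_1/\tphi_1$ reduce to $\vphi_1/\phi_1$ independently of $\zeta$. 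Organizing the verifications around the reduction at $\zeta = \vphi_1$ and exploiting the low degree in $\zeta$ of the identities in (iii), (iv) should keep the arithmetic within reason.
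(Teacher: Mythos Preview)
Your plan is correct and largely parallels the paper's argument, but with one organizing idea the paper does not use. The paper proceeds by brute-force verification: it computes $\tth_i(\zeta)-\tth_j(\zeta)$ explicitly (Lemma~\ref{lem:typeItthitthj}) to handle condition~(v) and condition~(i), checks directly that each $\tvphi_i(\zeta)$, $\tphi_i(\zeta)$ is a nonzero polynomial in $\zeta$ of degree $1$ or $2$ (Lemma~\ref{lem:typeIcond(ii)}) for condition~(ii), and verifies conditions~(iii),~(iv) by a routine expansion (Lemmas~\ref{lem:typeIvth}--\ref{lem:typeItvphitphi}). Your Half~2 is essentially identical to the paper's (Lemmas~\ref{lem:thcoincide}, \ref{lem:tvphi1tphi1}, \ref{lem:typeIa0ad}).

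Your novelty is the specialization $\zeta=\vphi_1$. This claim is correct and is a genuine simplification for conditions~(i) and~(ii): once you know that $\tilde{p}(\vphi_1)$ is the parameter array of $\Phi$, the relevant polynomials in $\zeta$ are automatically nonzero at $\zeta=\vphi_1$, hence are nonzero polynomials with finitely many roots---no case analysis needed. That is cleaner than the paper's Lemmas~\ref{lem:typeIcond(i)} and~\ref{lem:typeIcond(ii)}. However, be aware of two points. First, establishing $\tilde{p}(\vphi_1)=\text{parameter array of }\Phi$ for the intermediate $\tth_i,\tth^*_i$ requires the observation that $K_i(\zeta)$ has the form $A+Bq^i+Cq^{-i}$ (so matching at $i=0,1,d$ forces a match everywhere), and the intermediate $\tvphi_i,\tphi_i$ then follow from condition~(iii),~(iv) \emph{for $\Phi$}; you should make this structure explicit rather than saying ``analogous reductions.'' Second, for conditions~(iii),~(iv) themselves, both sides are degree~$\le 2$ in $\zeta$, so a single test point does not suffice; your fallback ``direct formal expansion'' is precisely the paper's Lemma~\ref{lem:typeItvphitphi}, and there is no evident shortcut here. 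So your specialization idea buys you (i),~(ii) cheaply but not (iii),~(iv).
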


The following three lemmas can be routinely verified.

\begin{lemma}    \label{lem:thcoincide}   \samepage
\ifDRAFT {\rm lem:thcoincide}. \fi
We have
\begin{align*} 
 \tth_0(\zeta) &= \th_0, &
 \tth_d(\zeta)  &= \th_d, &
 \tth^*_0(\zeta)  &= \th^*_0, &
 \tth^*_d(\zeta) &= \th^*_d.
\end{align*}
\end{lemma}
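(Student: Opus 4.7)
The approach is a direct computation from the definitions \eqref{eq:Li}--\eqref{eq:Ksi} and \eqref{eq:typeItthitthsi}, using the Case~(i) hypothesis $q^{d-1}=-1$.

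For the $i=0$ cases, note that the prefactor $q^i-1$ in \eqref{eq:Ki} vanishes at $i=0$, so $K_0(\zeta)=0$, and similarly $K^*_0(\zeta)=0$ from \eqref{eq:Ksi}. Substituting into \eqref{eq:typeItthitthsi} yields $\tth_0(\zeta)=\th_0$ and $\tth^*_0(\zeta)=\th^*_0$ at once. (This step uses nothing beyond the algebraic form of $K_i$ and $K^*_i$, so it holds for any nonzero $\zeta$.)

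For the $i=d$ cases, I plan to use $q^{d-1}=-1$ twice: once to kill the $\zeta$-term in $L_d(\zeta)$, and once to simplify the prefactor of $K_d(\zeta)$. From \eqref{eq:Li} with $i=d$, the factor $q^{d-1}+1=0$ eliminates the $\zeta$ term, leaving
\[
L_d(\zeta)=-(q-1)(q^{d-1}-1)(a^*_0-\th^*_0)(\th_0-\th_d)=2(q-1)(a^*_0-\th^*_0)(\th_0-\th_d).
\]
From \eqref{eq:Ki} with $i=d$, using $q^d=-q$ and $q^{d-1}=-1$, the prefactor becomes
\[
-\frac{q^d-1}{2q^{d-1}(q^2-1)(a^*_0-\th^*_0)}=-\frac{-(q+1)}{-2(q^2-1)(a^*_0-\th^*_0)}=-\frac{1}{2(q-1)(a^*_0-\th^*_0)}.
\]
Multiplying these, the factors $(q-1)$ and $(a^*_0-\th^*_0)$ cancel, yielding $K_d(\zeta)=\th_d-\th_0$, hence $\tth_d(\zeta)=\th_d$. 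The analogous computation for $K^*_d(\zeta)$ from \eqref{eq:Lsi} and \eqref{eq:Ksi} is identical after exchanging the roles of $(a_0-\th_0)$ and $(a^*_0-\th^*_0)$ and of $(\th_0-\th_d)$ and $(\th^*_0-\th^*_d)$, giving $\tth^*_d(\zeta)=\th^*_d$.

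There is no genuine obstacle: every identity reduces to the single observation that $q^{d-1}=-1$ simultaneously forces $q^{d-1}+1=0$ and $q^{d-1}-1=-2$, which are exactly the two cancellations needed. The only thing to be slightly careful about is that $q-1\neq 0$ and $q+1\neq 0$, which both follow from Lemma~\ref{lem:types}(i) since $q^2=1$ would contradict $q^i\neq 1$ for $1\leq i\leq d$ (as $d\geq 3$).
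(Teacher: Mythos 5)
Your proof is correct and matches the paper's intent: the paper simply states that this lemma "can be routinely verified," and your direct computation (the vanishing factor $q^0-1$ for $i=0$, and the two cancellations forced by $q^{d-1}=-1$ for $i=d$) is exactly that routine verification, including the needed observations $q\neq\pm 1$ from Lemma \ref{lem:types}(i).
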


\begin{lemma}   \label{lem:typeItthitthj}    \samepage
\ifDRAFT {\rm lem:typeItthitthj}. \fi
For $0 \leq i,j \leq d$
\begin{align}
\tth_i (\zeta) - \tth_j (\zeta) &=
 \frac{q^j-q^i }{2 q^{i+j-1} }
   \left( \frac{q^{i+j-1}+1}{(q-1)(a^*_0-\th^*_0)} \, \zeta
           - \frac{(q^{i+j-1}-1)(\th_0-\th_d)}{q+1} \right),   \label{eq:typeIthithj}
\\
\tth^*_i (\zeta) - \tth^*_j (\zeta) &=
 \frac{q^j-q^i }{2 q^{i+j-1} }
   \left( \frac{q^{i+j-1}+1}{(q-1)(a_0-\th_0)} \, \zeta
           - \frac{(q^{i+j-1}-1)(\th^*_0-\th^*_d)}{q+1} \right).  \label{eq:typeIthsithsj}
\end{align}
\end{lemma}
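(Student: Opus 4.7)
The plan is to compute $\tth_i(\zeta) - \tth_j(\zeta)$ directly from the definitions and verify it agrees with the right-hand side of \eqref{eq:typeIthithj}. By \eqref{eq:typeItthitthsi} the $\th_0$ terms cancel, so the task reduces to evaluating $K_i(\zeta) - K_j(\zeta)$. Substituting \eqref{eq:Li} into \eqref{eq:Ki}, one sees that $K_i(\zeta)$ is linear in $\zeta$, with
\[
 K_i(\zeta) = -\frac{(q^i-1)(q^{i-1}+1)}{2q^{i-1}(q-1)(a^*_0-\th^*_0)}\,\zeta + \frac{(q^i-1)(q^{i-1}-1)(\th_0-\th_d)}{2q^{i-1}(q+1)},
\]
so $K_i(\zeta)-K_j(\zeta)$ breaks into a $\zeta$-coefficient and a $(\th_0-\th_d)$-coefficient that can be checked independently.

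First I would verify the coefficient of $\zeta$ in $K_i(\zeta) - K_j(\zeta)$. After clearing common denominators, this reduces to the identity
\[
 \frac{(q^j-1)(q^{j-1}+1)}{q^{j-1}} - \frac{(q^i-1)(q^{i-1}+1)}{q^{i-1}} = \frac{(q^j-q^i)(q^{i+j-1}+1)}{q^{i+j-1}},
\]
which follows from the expansion $(q^k-1)(q^{k-1}+1)/q^{k-1} = q^k+q-1-q^{1-k}$ applied to $k=i,j$ together with the elementary relation $q^{-i}-q^{-j} = (q^j-q^i)/q^{i+j}$. The analogous manipulation for the $(\th_0-\th_d)$-coefficient uses $(q^k-1)(q^{k-1}-1)/q^{k-1} = q^k-q-1+q^{1-k}$ and produces the factor $-(q^j-q^i)(q^{i+j-1}-1)/q^{i+j-1}$, matching the second summand inside the large parenthesis on the right of \eqref{eq:typeIthithj}.

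For \eqref{eq:typeIthsithsj}, note that the formulas \eqref{eq:Lsi}, \eqref{eq:Ksi} defining $L^*_i(\zeta), K^*_i(\zeta)$ are obtained from \eqref{eq:Li}, \eqref{eq:Ki} by the formal substitution $(a^*_0-\th^*_0,\, \th_0-\th_d) \leftrightarrow (a_0-\th_0,\, \th^*_0-\th^*_d)$, so running the identical computation with these substitutions immediately yields the second identity. The only real obstacle is the bookkeeping of $q$-powers; there is no conceptual content beyond the two elementary expansions above, since the entire calculation is linear in $\zeta$ with explicit closed-form coefficients.
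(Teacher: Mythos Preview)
Your proof is correct and follows the same approach as the paper: the paper states only that this lemma ``can be routinely verified,'' and what you have written is exactly that routine verification carried out in full, expanding $K_i(\zeta)$ via \eqref{eq:Li}--\eqref{eq:Ki} and checking the two coefficients separately. Your elementary identities for $(q^k-1)(q^{k-1}\pm 1)/q^{k-1}$ are correct and do the job; nothing is missing.
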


\begin{lemma}   \label{lem:typeIindep}   \samepage
\ifDRAFT {\rm lem:typeIindep}. \fi
For $2 \leq i \leq d-1$
each of the expressions 
\[
  \frac{\tth_{i-2}(\zeta)-\tth_{i+1}(\zeta)}{\tth_{i-1}(\zeta)-\tth_i(\zeta)},
  \qquad\qquad
  \frac{\tth^*_{i-2}(\zeta)-\tth^*_{i+1}(\zeta)}{\tth^*_{i-1}(\zeta)-\tth^*_i(\zeta)}
\]
is equal to $q+q^{-1}+1$.
\end{lemma}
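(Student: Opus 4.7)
The plan is to apply Lemma~\ref{lem:typeItthitthj} directly and exploit a symmetry in the index. The key observation is that in the formula
$$\tth_i(\zeta) - \tth_j(\zeta) = \frac{q^j - q^i}{2 q^{i+j-1}}\left(\frac{q^{i+j-1}+1}{(q-1)(a^*_0-\th^*_0)}\zeta - \frac{(q^{i+j-1}-1)(\th_0-\th_d)}{q+1}\right),$$
the bracketed factor depends on $i$ and $j$ only through the sum $i+j-1$. Since $(i-2)+(i+1)-1 = 2i-2 = (i-1)+i-1$, the bracketed factors in the numerator $\tth_{i-2}(\zeta)-\tth_{i+1}(\zeta)$ and in the denominator $\tth_{i-1}(\zeta)-\tth_i(\zeta)$ are identical, as are the powers of $q$ in the outer denominator $2q^{2i-2}$.

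After these common factors cancel, the ratio reduces to
$$\frac{q^{i+1}-q^{i-2}}{q^i-q^{i-1}} = \frac{q^{i-2}(q^3-1)}{q^{i-1}(q-1)} = \frac{q^2+q+1}{q} = q+q^{-1}+1,$$
as required. The starred ratio is handled identically using the second equation of Lemma~\ref{lem:typeItthitthj}; the roles of $(a^*_0-\th^*_0)$, $(\th_0-\th_d)$ are exchanged with $(a_0-\th_0)$, $(\th^*_0-\th^*_d)$, but the same cancellation occurs.

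There is essentially no obstacle: the entire proof is the algebraic identity above. The only mild caveat is that one must ensure the denominators $\tth_{i-1}(\zeta)-\tth_i(\zeta)$ (and its starred analogue) do not vanish in order for the ratios to be defined; since this denominator is a nontrivial linear expression in $\zeta$, it vanishes for at most one value of $\zeta$, which is harmless for the forthcoming argument that $\tilde{p}(\zeta)$ is a parameter array for infinitely many $\zeta$.
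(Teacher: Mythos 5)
Your proof is correct and is essentially the paper's intended argument: the paper leaves this lemma as a routine verification, and the natural route is exactly your cancellation in Lemma \ref{lem:typeItthitthj}, where the bracketed factor depends only on $i+j-1$ so the ratio collapses to $(q^{i+1}-q^{i-2})/(q^i-q^{i-1})=q+q^{-1}+1$. The only nitpick is that the denominator need not be a nontrivial linear polynomial in $\zeta$ (when $q^{2i-2}=-1$, which forces $i=(d+1)/2$, the bracket is the nonzero constant $2(\th_0-\th_d)/(q+1)$ since $\mathrm{Char}(\F)\neq 2$), but in every case it vanishes for at most one $\zeta$, so your conclusion stands and matches how the paper handles degeneracies via Lemma \ref{lem:typeIcond(i)}.
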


\begin{lemma}    \label{lem:typeIcond(i)}   \samepage
\ifDRAFT {\rm lem:typeIcond(i)}. \fi
For $0 \leq i < j \leq d$ the following hold:
\begin{itemize}
\item[\rm (i)]
$\tth_i (\zeta)=\tth_j (\zeta)$ holds for at most one value of $\zeta$.
\item[\rm (ii)]
$\tth^*_i (\zeta)=\tth^*_j (\zeta)$ holds for at most one value of $\zeta$.
\end{itemize}
\end{lemma}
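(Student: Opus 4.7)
The plan is to read off $\tth_i(\zeta) - \tth_j(\zeta)$ directly from Lemma~\ref{lem:typeItthitthj} and observe that it factors as a nonzero scalar times an affine function of $\zeta$; the zero set of such a function is either empty or a singleton, which gives the desired ``at most one $\zeta$'' conclusion.

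First I would check that the prefactor $q^j - q^i$ in \eqref{eq:typeIthithj} is nonzero. Since $0 \leq i < j \leq d$, the exponent $j-i$ lies in $\{1, 2, \ldots, d\}$, and Lemma~\ref{lem:types}(i) (available because $\Phi$ has type I) gives $q^{j-i} \neq 1$. I would also note that type I forces $q \neq \pm 1$, so $q-1$ and $q+1$ are nonzero, and $a^*_0 \neq \th^*_0$ by Lemma~\ref{lem:a0neqth0}, so that the right-hand side of \eqref{eq:typeIthithj} is well-defined.

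Next I would analyze the bracketed affine function of $\zeta$ in \eqref{eq:typeIthithj}. If $q^{i+j-1} + 1 \neq 0$, the coefficient of $\zeta$ is nonzero, so the equation has exactly one solution. The only case requiring care, and the step I expect to be the main (albeit minor) obstacle, is when $q^{i+j-1} = -1$: then the coefficient of $\zeta$ vanishes and one must rule out the equation collapsing to $0 = 0$. Here I would use that $q^{i+j-1} - 1 = -2 \neq 0$ since $\text{Char}(\F) \neq 2$ (as observed at the start of the section, $q^{d-1} = -1$ together with $\text{Char}(\F)=2$ would give $q^{d-1}=1$), together with $\th_0 \neq \th_d$ and $q+1 \neq 0$, to conclude the constant term is nonzero; hence the affine function has no zero whatsoever, so certainly at most one $\zeta$ works. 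This proves (i).

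Part (ii) is obtained from the companion formula \eqref{eq:typeIthsithsj} by the identical argument, with $(a^*_0 - \th^*_0,\, \th_0 - \th_d)$ replaced by $(a_0 - \th_0,\, \th^*_0 - \th^*_d)$; the hypotheses $a_0 \neq \th_0$ and $\th^*_0 \neq \th^*_d$ from Lemmas~\ref{lem:a0neqth0} and \ref{lem:classify}(i) ensure the analogous factorization is valid, and the case analysis on $q^{i+j-1} = -1$ goes through verbatim.
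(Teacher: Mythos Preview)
Your proposal is correct and follows essentially the same route as the paper's proof: both read off $\tth_i(\zeta)-\tth_j(\zeta)$ from \eqref{eq:typeIthithj}, split into the cases $q^{i+j-1}=-1$ and $q^{i+j-1}\neq -1$, and in the former case verify that the expression is a nonzero constant while in the latter it is a degree-one polynomial in $\zeta$. You are simply more explicit than the paper about why the various prefactors and denominators are nonzero.
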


\begin{proof}
(i):
First assume $q^{i+j-1} = -1$.
Then by \eqref{eq:typeIthithj}
\[
   \tth_i (\zeta) - \tth_j (\zeta) = \frac{(q^i-q^j)(\th_0-\th_d)}{q+1} \neq 0.
\]
Next assume $q^{i+j-1} \neq -1$.
Then by \eqref{eq:typeIthithj}, $\tth_i (\zeta) - \tth_j (\zeta)$
is a polynomial in $\zeta$ with degree $1$.
The result follows.

(ii):
Similar to the proof of (i).
\end{proof}

\begin{lemma}    \label{lem:tvphi1tphi1}   \samepage
\ifDRAFT {\rm lem:tvphi1tphi1}. \fi
We have
\begin{align*}
 \tvphi_1 (\zeta) &= \zeta,
&
 \tphi_1 (\zeta) &= \frac{a_0-\th_d}{a_0-\th_0} \, \zeta,
\\
 \tvphi_d (\zeta) &= \frac{a_d - \th_d}{a_0-\th_0} \, \zeta,
&
 \tphi_d (\zeta) &= \frac{a_d-\th_0}{a_0-\th_0} \, \zeta.
\end{align*}
\end{lemma}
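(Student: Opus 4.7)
\medskip\noindent
The plan is to evaluate each of the four quantities $\tvphi_1(\zeta)$, $\tphi_1(\zeta)$, $\tvphi_d(\zeta)$, $\tphi_d(\zeta)$ by direct substitution into \eqref{eq:typeItvphi} and \eqref{eq:typeItphi}, using the hypothesis $q^{d-1}=-1$ to simplify. The key observation is that two different degenerations occur at the two indices:
at $i=1$, the factor $q^{i-1}-1$ vanishes, so the constant term (in $\zeta$) of each of $L_1(\zeta)$, $L^*_1(\zeta)$, $L^\Downarrow_1(\zeta)$ disappears by \eqref{eq:Li}, \eqref{eq:Lsi}, \eqref{eq:LDi};
at $i=d$, the factor $q^{i-1}+1$ vanishes because $q^{d-1}=-1$, so the $\zeta$-term of each of $L_d(\zeta)$, $L^*_d(\zeta)$, $L^\Downarrow_d(\zeta)$ disappears instead. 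In both cases a clean single-term expression remains.

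Next I verify that the coefficient $(q^i-1)(q^{i-2}+1)/(q^{i-2}(q^2-1))$ appearing in \eqref{eq:typeItvphi} and \eqref{eq:typeItphi} equals $1$ at $i=1$ (by direct cancellation) and at $i=d$ (using $q^d=-q$, $q^{d-2}=-q^{-1}$). With these simplifications in hand, I then compute $K^*_1(\zeta)$, $K_d(\zeta)$, $K^\Downarrow_d(\zeta)$ and find compact closed forms such as $K^*_1(\zeta)=-\zeta/(a_0-\th_0)$, $K_d(\zeta)=\th_d-\th_0$, $K^\Downarrow_d(\zeta)=\th_0-\th_d$. Substituting into \eqref{eq:typeItvphi} and \eqref{eq:typeItphi} with $i=1$ gives $\tvphi_1(\zeta)=\zeta$ and $\tphi_1(\zeta)=\frac{a_0-\th_d}{a_0-\th_0}\zeta$ after an elementary combination of two fractions over the common denominator $a_0-\th_0$.

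For $i=d$, the same procedure yields $K^\Downarrow_1(\zeta)=-\frac{(a_0-\th_d)\zeta}{(a_0-\th_0)(a^*_d-\th^*_0)}$, $K^*_d(\zeta)=\th^*_d-\th^*_0$, and analogous expressions for the $\tphi_d$ computation. The resulting formulas come out naturally in the form
\[
 \tvphi_d(\zeta)=\frac{a_0-\th_d}{a_0-\th_0}\cdot\frac{a^*_d-\th^*_d}{a^*_d-\th^*_0}\,\zeta,
\qquad
 \tphi_d(\zeta)=\frac{a_0-\th_d}{a_0-\th_0}\cdot\frac{a^*_d-\th^*_0+ (\text{correction})}{a^*_d-\th^*_0}\,\zeta,
\]
involving ratios of \emph{starred} end-entry differences, whereas the target answers involve ratios of \emph{unstarred} ones. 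The bridge between the two is exactly Lemma~\ref{lem:Delta0relations}: the hypothesis $\vphi_1+\vphi_d=\phi_1+\phi_d$ of this section is precisely what activates that lemma, and formulas \eqref{eq:a0thd/a0th0} and \eqref{eq:a0th0/adthd} convert the starred ratios into $\frac{a_d-\th_d}{a_0-\th_0}$ and $\frac{a_d-\th_0}{a_0-\th_0}$ respectively.

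The main obstacle is purely bookkeeping: the expressions \eqref{eq:Li}--\eqref{eq:KDi} and \eqref{eq:typeItvphi}, \eqref{eq:typeItphi} involve many factors, and one must track signs carefully when applying $q^{d-1}=-1$, $q^d=-q$. There is no conceptual difficulty; the only place where a substantive input beyond the definitions is needed is the appeal to Lemma~\ref{lem:Delta0relations} in the $i=d$ cases, and that appeal is straightforward once the intermediate expressions are in hand.
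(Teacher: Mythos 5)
Your calculation is correct, and it is essentially the paper's argument: the paper's proof is exactly the direct substitution you carry out ("routine verification using $q^{d-1}=-1$"), and all your intermediate values check out ($K^*_1(\zeta)=-\zeta/(a_0-\th_0)$, $K_1(\zeta)=-\zeta/(a^*_0-\th^*_0)$, $K_d(\zeta)=\th_d-\th_0$, $K^\Downarrow_d(\zeta)=\th_0-\th_d$, $K^*_d(\zeta)=\th^*_d-\th^*_0$, $K^\Downarrow_1(\zeta)=-(a_0-\th_d)\zeta/((a_0-\th_0)(a^*_d-\th^*_0))$, and the coefficient $(q^i-1)(q^{i-2}+1)/(q^{i-2}(q^2-1))$ equal to $1$ at $i=1$ and $i=d$). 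The one place you go beyond the paper's one-line proof is the explicit appeal to Lemma \ref{lem:Delta0relations} for the $i=d$ entries, and this addition is in fact necessary: the data defining $\tilde{p}(\zeta)$ involve $a_0$, $a^*_0$, $a^*_d$ but never $a_d$, so the raw substitution can only produce $\tvphi_d(\zeta)=\frac{(a_0-\th_d)(a^*_d-\th^*_d)}{(a_0-\th_0)(a^*_d-\th^*_0)}\,\zeta$ and $\tphi_d(\zeta)=\frac{a^*_0-\th^*_d}{a^*_0-\th^*_0}\,\zeta$; rewriting these in terms of $a_d$ requires precisely the relations of Lemma \ref{lem:Delta0relations}, which are available because the standing hypothesis $\vphi_1+\vphi_d=\phi_1+\phi_d$ of this case forces $\Delta=0$. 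The paper leaves this conversion implicit here and instead uses the same relations in the opposite direction afterwards (deducing Lemma \ref{lem:vphidphid2} from this lemma), so your route and the paper's differ only in which form is taken as the endpoint of the computation. One minor slip in your sketch: for $\tphi_d$ the conversion is effected by \eqref{eq:a0th0/adth0}, which gives $\frac{a^*_0-\th^*_d}{a^*_0-\th^*_0}=\frac{a_d-\th_0}{a_0-\th_0}$, rather than by \eqref{eq:a0th0/adthd}; since both identities belong to the lemma you cite, this is a labeling slip, not a gap.
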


\begin{proof}
Routine verification using $q^{d-1}=-1$.
\end{proof}

For $1 \leq i \leq d$ define
\[
 \vth_i = 
  \sum_{\ell=0}^{i-1}
    \frac{\tth_{\ell}(\zeta) - \tth_{d-\ell} (\zeta)}
           {\tth_0 (\zeta) - \tth_d (\zeta)}.
\]

\begin{lemma}    \label{lem:typeIvth}   \samepage
\ifDRAFT {\rm lem:typeIvth}. \fi
For $1 \leq i \leq d$
\begin{equation}                               \label{eq:typeIvth}
   \vth_i =  \frac{(q^i-1)(q^{i-2}+1)}{q^{i-2}(q^2-1)}.
\end{equation}
\end{lemma}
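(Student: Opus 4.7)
\medskip\noindent\textbf{Proof plan for Lemma \ref{lem:typeIvth}.}
The plan is to evaluate each summand $(\tth_\ell(\zeta)-\tth_{d-\ell}(\zeta))/(\tth_0(\zeta)-\tth_d(\zeta))$ in closed form, observe that the $\zeta$-dependence miraculously cancels because we are in Case (i) (so $q^{d-1}=-1$), and then sum a short geometric series.

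First I would specialize Lemma \ref{lem:typeItthitthj} to the index pair $(\ell,d-\ell)$. The factor $q^{i+j-1}$ becomes $q^{d-1}=-1$, which kills the coefficient $q^{d-1}+1$ of the $\zeta$-term and turns $q^{d-1}-1$ into $-2$. After this simplification one obtains
\[
\tth_\ell(\zeta)-\tth_{d-\ell}(\zeta) \;=\; \frac{q^\ell - q^{d-\ell}}{q+1}\,(\th_0-\th_d),
\]
with no surviving $\zeta$. Taking $\ell=0$ recovers $\tth_0(\zeta)-\tth_d(\zeta)=\th_0-\th_d$ (which is consistent with Lemma \ref{lem:thcoincide}), so the denominator of each summand is simply $(1-q^d)(\th_0-\th_d)/(q+1)$.

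Next I would divide and use $q^d=-q$ (a consequence of $q^{d-1}=-1$) to rewrite
\[
\frac{\tth_\ell(\zeta)-\tth_{d-\ell}(\zeta)}{\tth_0(\zeta)-\tth_d(\zeta)} \;=\; \frac{q^\ell - q^{d-\ell}}{1-q^d} \;=\; \frac{q^\ell + q^{1-\ell}}{1+q},
\]
the second equality coming from $q^{d-\ell}=q^{d-1}q^{1-\ell}=-q^{1-\ell}$. Summing over $0\le\ell\le i-1$ reduces to the two geometric sums $\sum_{\ell=0}^{i-1}q^\ell=(q^i-1)/(q-1)$ and $\sum_{\ell=0}^{i-1}q^{1-\ell}=q^{2-i}(q^i-1)/(q-1)$.

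Combining the two sums gives
\[
\vth_i \;=\; \frac{q^i-1}{(q+1)(q-1)}\bigl(1+q^{2-i}\bigr) \;=\; \frac{(q^i-1)(q^{i-2}+1)}{q^{i-2}(q^2-1)},
\]
which is \eqref{eq:typeIvth}. No genuine obstacle arises here; the only subtlety is being careful about factors of $q^{i-2}$ versus $q^{2-i}$ when pulling out the common power to match the stated form (and implicitly relying on $q\neq\pm1$ and $\mathrm{Char}(\F)\neq 2$, both of which hold in Case (i) by Lemma \ref{lem:types}(i) and the remarks preceding the construction).
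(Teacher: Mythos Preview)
Your proposal is correct and follows essentially the same route as the paper: specialize \eqref{eq:typeIthithj} to the pair $(\ell,d-\ell)$, use $q^{d-1}=-1$ to eliminate the $\zeta$-term and obtain the summand $(q^\ell+q^{1-\ell})/(q+1)$, and then sum. The paper's proof simply abbreviates the final geometric-series computation as ``one routinely verifies \eqref{eq:typeIvth},'' whereas you spell it out explicitly.
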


\begin{proof}
By \eqref{eq:typeIthithj},
\[
 \frac{\tth_\ell(\zeta)-\tth_{d-\ell}(\zeta)}{\tth_0(\zeta)-\tth_d(\zeta)}
 = \frac{q^{d-\ell} - q^{\ell}}{2q^{d-1}(\th_0-\th_d)}
   \left( \frac{q^{d-1}+1}{(q-1)(a^*_0-\th^*_0)}
           - \frac{(q^{d-1}-1)(\th_0-\th_d)}{q+1} \right).
\]
Simplify this using $q^{d-1}=-1$ to find
\[
  \frac{\tth_\ell(\zeta)-\tth_{d-\ell}(\zeta)}{\tth_0(\zeta)-\tth_d(\zeta)}
  = \frac{q^\ell + q^{1-\ell}}{q+1}.
\]
Now one routinely verifies \eqref{eq:typeIvth}.
\end{proof}

\begin{lemma}    \label{lem:KLinv}    \samepage
\ifDRAFT {\rm lem:KLinv}. \fi
For $1 \leq i \leq d$
\begin{align*}
 L_{d-i+1} (\zeta) &=
   q^{1-i}(q+1)(q^{i-1}-1) \zeta + q^{1-i}(q-1)(q^{i-1}+1)(a^*_0-\th^*_0)(\th_0-\th_d),
\\
 K_{d-i+1} (\zeta) &=
   - \frac{q (q^{i-2}+1)}{2(q^2-1)(a^*_0-\th^*_0)} \, L_{d-i+1} (\zeta).
\end{align*}
\end{lemma}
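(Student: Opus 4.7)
The plan is to prove both identities by direct substitution into the definitions \eqref{eq:Li} and \eqref{eq:Ki}, using the Case (i) hypothesis $q^{d-1}=-1$ to simplify the resulting powers of $q$. The key observation is that $q^{d-1}=-1$ implies $q^d = -q$, and hence $q^{d-i} = -q^{1-i}$ and $q^{d-i+1} = -q^{2-i}$ for all relevant $i$.

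First I would handle $L_{d-i+1}(\zeta)$. Replacing $i$ by $d-i+1$ in \eqref{eq:Li} gives
\[
 L_{d-i+1}(\zeta) = (q+1)(q^{d-i}+1)\zeta - (q-1)(q^{d-i}-1)(a^*_0-\th^*_0)(\th_0-\th_d).
\]
Using $q^{d-i}=-q^{1-i}$, I would rewrite $q^{d-i}+1 = 1-q^{1-i} = q^{1-i}(q^{i-1}-1)$ and $q^{d-i}-1 = -1-q^{1-i} = -q^{1-i}(q^{i-1}+1)$. Substituting these into the displayed expression and collecting the factor $q^{1-i}$ produces exactly the claimed formula for $L_{d-i+1}(\zeta)$.

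Next I would treat $K_{d-i+1}(\zeta)$. Replacing $i$ by $d-i+1$ in \eqref{eq:Ki} gives
\[
 K_{d-i+1}(\zeta) = -\frac{q^{d-i+1}-1}{2q^{d-i}(q^2-1)(a^*_0-\th^*_0)}\, L_{d-i+1}(\zeta).
\]
Using $q^{d-i+1}=-q^{2-i}$ and $q^{d-i}=-q^{1-i}$, I would compute $q^{d-i+1}-1 = -q^{2-i}-1 = -q^{2-i}(q^{i-2}+1)$, so the prefactor becomes $-\frac{-q^{2-i}(q^{i-2}+1)}{-2q^{1-i}(q^2-1)(a^*_0-\th^*_0)} = -\frac{q(q^{i-2}+1)}{2(q^2-1)(a^*_0-\th^*_0)}$, which is the claimed coefficient.

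There is no real obstacle here: the lemma is a purely formal rewriting of the definitions under the hypothesis $q^{d-1}=-1$, and the only thing to watch is sign bookkeeping when pulling out powers of $q$. Since Lemma \ref{lem:types}(i) guarantees $q^i \neq 1$ for $1 \leq i \leq d$ and $\mathrm{Char}(\F)\neq 2$ (so that $q^2-1 \neq 0$ and the denominators are nonzero), all the manipulations are legitimate.
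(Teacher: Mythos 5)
Your proof is correct and matches the paper's approach: the paper's own proof is simply ``Follows from \eqref{eq:Li}, \eqref{eq:Ki} and $q^{d-1}=-1$,'' and you have carried out exactly that substitution, with the sign bookkeeping ($q^{d-i}=-q^{1-i}$, $q^{d-i+1}=-q^{2-i}$) done correctly. Nothing is missing.
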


\begin{proof}
Follows from \eqref{eq:Li}, \eqref{eq:Ki} and $q^{d-1}=-1$.
\end{proof}

\begin{lemma}    \label{lem:KDLDinv}    \samepage
\ifDRAFT {\rm lem:KDLDinv}. \fi
For $1 \leq i \leq d$
\begin{align*}
 L^\Downarrow_{d-i+1} (\zeta) &=
   \frac{ (q+1)(q^{i-1} -1)(a_0-\th_d)} {q^{i-1} (a_0 - \th_0)} \, \zeta
 - q^{1-i} (q-1)(q^{i-1}+1)(a^*_d - \th^*_0)(\th_0 - \th_d),
\\
 K^\Downarrow_{d-i+1} (\zeta) &=
  - \frac{q(q^{i-2}+1)} {2 (q^2-1)(a^*_d - \th^*_0)} \, L^\Downarrow_{d-i+1} (\zeta).
\end{align*}
\end{lemma}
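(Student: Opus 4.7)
The plan is to verify the two identities by direct substitution of $j = d-i+1$ into the defining equations \eqref{eq:LDi} and \eqref{eq:KDi}, and then applying the hypothesis $q^{d-1} = -1$ that is in force throughout Case~(i). The argument will parallel the proof of Lemma~\ref{lem:KLinv}, with the only extra bookkeeping being the presence of the factor $(a_0-\th_d)/(a_0-\th_0)$ and the replacement of $(a^*_0-\th^*_0)$ by $(a^*_d-\th^*_0)$ coming from the definition of $\zeta^{\Downarrow}$.

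First I would set $j = d-i+1$ in \eqref{eq:LDi}, giving
\[
 L^\Downarrow_{d-i+1}(\zeta) = \frac{(q+1)(q^{d-i}+1)(a_0-\th_d)}{a_0-\th_0}\,\zeta + (q-1)(q^{d-i}-1)(a^*_d-\th^*_0)(\th_0-\th_d).
\]
Then I would use $q^{d-1}=-1$ to rewrite $q^{d-i} = q^{d-1}\cdot q^{1-i} = -q^{1-i}$, so that
\[
 q^{d-i}+1 = q^{1-i}(q^{i-1}-1), \qquad q^{d-i}-1 = -q^{1-i}(q^{i-1}+1).
\]
Substituting these two identities into the expression for $L^\Downarrow_{d-i+1}(\zeta)$ yields the first claimed formula.

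For the second formula, I would start from \eqref{eq:KDi} with $j = d-i+1$, namely
\[
 K^\Downarrow_{d-i+1}(\zeta) = -\frac{q^{d-i+1}-1}{2q^{d-i}(q^2-1)(a^*_d-\th^*_0)}\,L^\Downarrow_{d-i+1}(\zeta),
\]
and then simplify the scalar prefactor. Using $q^{d-1}=-1$ we have $q^{d-i+1} = -q^{2-i}$ and $q^{d-i}=-q^{1-i}$, giving $q^{d-i+1}-1 = -q^{2-i}(q^{i-2}+1)$ and consequently
\[
 \frac{q^{d-i+1}-1}{q^{d-i}} \;=\; \frac{-q^{2-i}(q^{i-2}+1)}{-q^{1-i}} \;=\; q(q^{i-2}+1),
\]
which produces exactly the claimed coefficient.

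The step is entirely routine; there is no genuine obstacle beyond keeping track of signs and powers of $q$. The only mild subtlety is making sure the factor $q^{1-i}$ that arises from $q^{d-i}=-q^{1-i}$ is correctly distributed when comparing with the formulation in the statement, where the first term carries $1/q^{i-1}$ and the second carries $q^{1-i}$; these are of course equal, but I would write both in the same form to confirm agreement.
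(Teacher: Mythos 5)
Your proposal is correct and matches the paper's own proof, which simply cites \eqref{eq:LDi}, \eqref{eq:KDi} and $q^{d-1}=-1$; you have just carried out the substitution $i\mapsto d-i+1$ and the sign/power bookkeeping explicitly. All the intermediate identities ($q^{d-i}=-q^{1-i}$, $q^{d-i}+1=q^{1-i}(q^{i-1}-1)$, $q^{d-i}-1=-q^{1-i}(q^{i-1}+1)$, and the simplification of the prefactor to $q(q^{i-2}+1)$) check out.
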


\begin{proof}
Follows from \eqref{eq:LDi}, \eqref{eq:KDi} and $q^{d-1}=-1$.
\end{proof}

\begin{lemma}   \label{lem:typeItvphitphi}   \samepage
\ifDRAFT {\rm lem:typeItvphitphi}. \fi
For $1 \leq i \leq d$
\begin{align*}
 \tvphi_i (\zeta) &= \tphi_1 (\zeta) \vth_i 
        + (\tth^*_i (\zeta) - \tth^*_0(\zeta))(\tth_{i-1} (\zeta)-\tth_d(\zeta)),
\\
 \tphi_i (\zeta) &= \tvphi_1 (\zeta) \vth_i 
       + (\tth^*_i (\zeta) - \tth^*_0(\zeta))(\tth_{d-i+1} (\zeta) - \tth_0(\zeta)).
\end{align*}
\end{lemma}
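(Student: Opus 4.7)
The plan is to prove both identities by direct substitution of the explicit formulas \eqref{eq:typeItvphi}, \eqref{eq:typeItphi} that define $\tvphi_i(\zeta)$ and $\tphi_i(\zeta)$, after first recognizing that several pieces have already been computed in the preceding lemmas. From Lemma \ref{lem:typeIvth} we have
\[
 \vth_i = \frac{(q^i-1)(q^{i-2}+1)}{q^{i-2}(q^2-1)},
\]
and from Lemma \ref{lem:tvphi1tphi1} we have $\tvphi_1(\zeta)=\zeta$ and $\tphi_1(\zeta)=\frac{a_0-\th_d}{a_0-\th_0}\zeta$. Comparing these with the ``pure-$\zeta$'' summands appearing in the definitions \eqref{eq:typeItvphi} and \eqref{eq:typeItphi}, I would first observe the identifications
\[
 \frac{(q^i-1)(q^{i-2}+1)(a_0-\th_d)}{q^{i-2}(q^2-1)(a_0-\th_0)}\,\zeta \;=\; \vth_i\,\tphi_1(\zeta),
 \qquad
 \frac{(q^i-1)(q^{i-2}+1)}{q^{i-2}(q^2-1)}\,\zeta \;=\; \vth_i\,\tvphi_1(\zeta).
\]

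Next I would note the elementary but crucial identities $K_0(\zeta)=0$ and $K^*_0(\zeta)=0$ that follow from the factor $q^i-1$ in \eqref{eq:Ki} and \eqref{eq:Ksi}; together with \eqref{eq:typeItthitthsi} they give $\tth_i(\zeta)-\tth_0(\zeta)=K_i(\zeta)$ and $\tth^*_i(\zeta)-\tth^*_0(\zeta)=K^*_i(\zeta)$. Then the desired formula for $\tphi_i(\zeta)$ reduces immediately to $K_{d-i+1}(\zeta)K^*_i(\zeta)=K^*_i(\zeta)K_{d-i+1}(\zeta)$, which is trivial. The formula for $\tvphi_i(\zeta)$ reduces to the single identity
\[
 K^{\Downarrow}_{d-i+1}(\zeta)\;=\;\tth_{i-1}(\zeta)-\tth_d(\zeta)\;=\;K_{i-1}(\zeta)-K_d(\zeta),
\]
after cancelling the common factor $K^*_i(\zeta)$.

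To prove this remaining identity I would use the $q^{d-1}=-1$ normalization to simplify $K_d(\zeta)$ directly from \eqref{eq:Li}, \eqref{eq:Ki}: the coefficient of $\zeta$ in $L_d(\zeta)$ vanishes because $q^{d-1}+1=0$, so $K_d(\zeta)=\th_d-\th_0$ is in fact independent of $\zeta$. One then evaluates $K_{i-1}(\zeta)-K_d(\zeta)$ from \eqref{eq:Ki}, and separately expands $K^\Downarrow_{d-i+1}(\zeta)$ via Lemma \ref{lem:KDLDinv}; the two expressions are polynomials of degree $\leq 1$ in $\zeta$, and one checks agreement of the $\zeta$-coefficient and the constant term using only $q^{d-1}=-1$ together with the identity \eqref{eq:restriction} (equivalently Lemma \ref{lem:Delta0relations}) to relate the constant term $(a^*_d-\th^*_0)(\th_0-\th_d)$ appearing in $L^\Downarrow$ to the data $(a^*_0-\th^*_0)(\th_0-\th_d)$ appearing in $L$.

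The hard part is really only bookkeeping: converting between the ``downarrow'' version of the $a$-parameters and the original ones via Lemma \ref{lem:Delta0relations}. No new structural ideas are required, and after that substitution both sides become manifestly equal polynomials in $\zeta$. The whole argument is therefore a routine verification modeled on the analogous one in \cite{N:endparam}; I would present it compactly by first stating the two reductions above and then devoting a short block to the computation $K_{i-1}(\zeta)-K_d(\zeta)=K^\Downarrow_{d-i+1}(\zeta)$.
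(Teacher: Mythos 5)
Your proposal is correct and takes essentially the same route as the paper: the paper's proof is precisely this routine verification from the defining formulas \eqref{eq:typeItvphi}, \eqref{eq:typeItphi} together with Lemmas \ref{lem:typeItthitthj}, \ref{lem:tvphi1tphi1}--\ref{lem:KDLDinv}, with the relation \eqref{eq:a0thd/a0th0} of Lemma \ref{lem:Delta0relations} available under the standing assumption $\vphi_1+\vphi_d=\phi_1+\phi_d$ of Section \ref{sec:proof2(i)}. One minor bookkeeping correction: in verifying $K^{\Downarrow}_{d-i+1}(\zeta)=\tth_{i-1}(\zeta)-\tth_d(\zeta)$, the relation \eqref{eq:a0thd/a0th0} is what is needed to match the coefficients of $\zeta$ (the constant terms agree directly, since the factor $a^*_d-\th^*_0$ cancels), rather than the constant terms as you indicate, but this does not affect the validity of your argument.
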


\begin{proof}
Routine verification using Lemmas \ref{lem:typeItthitthj} and \ref{lem:tvphi1tphi1}--\ref{lem:KDLDinv}.
\end{proof}

\begin{lemma}    \label{lem:typeIcond(ii)}   \samepage
\ifDRAFT {\rm lem:typeIcond(ii)}. \fi
For $1 \leq i \leq d$ the following hold:
\begin{itemize}
\item[\rm (i)]
$\tvphi_i (\zeta)=0$ holds for at most two values of $\zeta$.
\item[\rm (ii)]
$\tphi_i (\zeta)=0$ holds for at most two values of $\zeta$.
\end{itemize}
\end{lemma}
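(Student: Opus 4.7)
The plan is to show that both $\tvphi_i(\zeta)$ and $\tphi_i(\zeta)$ are polynomials in $\zeta$ of degree at most two, and then to show they are not identically zero; the conclusion follows because a nonzero polynomial of degree at most two has at most two roots in $\F$. The degree bound is immediate from the definitions: equations \eqref{eq:Li}--\eqref{eq:KDi} show that each of $L_j(\zeta)$, $L^*_j(\zeta)$, $L^\Downarrow_j(\zeta)$ is linear in $\zeta$, and hence so is each of $K_j(\zeta)$, $K^*_j(\zeta)$, $K^\Downarrow_j(\zeta)$; plugging into \eqref{eq:typeItvphi}--\eqref{eq:typeItphi} expresses $\tvphi_i(\zeta)$ and $\tphi_i(\zeta)$ as a product of two linear polynomials plus a linear term, hence each of total degree at most $2$.

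For the non-vanishing, the plan is to exhibit one value of $\zeta$ at which neither polynomial vanishes. The natural choice is $\zeta = \vphi_1$: I expect $\tilde{p}(\vphi_1)$ to coincide with the parameter array of $\Phi$ itself, which would give $\tvphi_i(\vphi_1) = \vphi_i \neq 0$ and $\tphi_i(\vphi_1) = \phi_i \neq 0$ by Lemma \ref{lem:classify}(ii). The endpoint identifications $\tth_0(\vphi_1) = \th_0$, $\tth_d(\vphi_1) = \th_d$, $\tth^*_0(\vphi_1) = \th^*_0$, $\tth^*_d(\vphi_1) = \th^*_d$ are Lemma \ref{lem:thcoincide}, and $\tvphi_1(\vphi_1) = \vphi_1$, $\tphi_1(\vphi_1) = \phi_1$, $\tvphi_d(\vphi_1) = \vphi_d$, $\tphi_d(\vphi_1) = \phi_d$ follow from Lemma \ref{lem:tvphi1tphi1} combined with Lemma \ref{lem:a0th0/a0thd} and, in the $d$-subscript cases, the standing hypothesis $\vphi_1 + \vphi_d = \phi_1 + \phi_d$ (which via Lemma \ref{lem:th1ths1} forces the symmetry $\th^*_0 - \th^*_1 = \th^*_d - \th^*_{d-1}$). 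For $2 \leq i \leq d-1$, one substitutes $\zeta = \vphi_1$ into \eqref{eq:Li}--\eqref{eq:typeItthitthsi} to verify $\tth_i(\vphi_1) = \th_i$ and $\tth^*_i(\vphi_1) = \th^*_i$; once these hold, Lemma \ref{lem:typeItvphitphi} together with Lemma \ref{lem:typeIvth} and Lemma \ref{lem:classify}(iii),(iv) yield $\tvphi_i(\vphi_1) = \vphi_i$ and $\tphi_i(\vphi_1) = \phi_i$.

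The main obstacle is the direct verification $\tth_i(\vphi_1) = \th_i$ (and its dual), which requires unwinding $L_i(\vphi_1)$ using the type-I closed form for $\th_i$ from \cite[Lemma 5.1]{T:Leonard} together with the identity $q^{d-1} = -1$ and the relations from Section \ref{sec:endentries}. A cleaner shortcut that avoids the explicit type-I formula is to argue directly on the leading coefficient: the $\zeta^2$ coefficient of $\tvphi_i(\zeta)$ (resp.\ $\tphi_i(\zeta)$) factors as the product of the leading coefficients of $K^*_i$ and $K^\Downarrow_{d-i+1}$ (resp.\ $K_{d-i+1}$); by Lemma \ref{lem:types}(i) the factors $q^i-1$ and $q^{i-1}-1$ in $K^*_i$ are nonzero for $2 \leq i \leq d$, so the leading coefficient can vanish only when $q^{i-1} = -1$ or $q^{i-2} = -1$, and in each such degenerate case one checks via \eqref{eq:typeIthithj}--\eqref{eq:typeIthsithsj} that the remaining $\zeta$-coefficient is nonzero. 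Either route yields that the polynomials are not identically zero, completing the proof.
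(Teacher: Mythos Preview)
Both of your routes are correct; the paper takes essentially your ``shortcut'' but with a key simplification you stop short of. For $2\le i\le d-1$ the paper shows each of $K^*_i(\zeta)$ and $K^{\Downarrow}_{d-i+1}(\zeta)$ has degree exactly~$1$ by observing that the potentially vanishing factors $q^{i-1}+1$ and $q^{i-2}+1$ in their leading coefficients are \emph{automatically} nonzero: if $q^{i-1}=-1$ then $q^{d-i}=q^{d-1}q^{1-i}=(-1)(-1)=1$, contradicting Lemma~\ref{lem:types}(i); similarly $q^{i-2}=-1$ forces $q^{d-i+1}=1$. So there are no ``degenerate cases'' to check by hand, and (with $i=1,d$ covered directly by Lemma~\ref{lem:tvphi1tphi1}) the polynomial has degree $1$ or $2$ outright.

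Your first route---showing $\tilde p(\vphi_1)$ coincides with the parameter array of $\Phi$, so that $\tvphi_i(\vphi_1)=\vphi_i\neq 0$---is a valid and genuinely different alternative. The verification $\tth_i(\vphi_1)=\th_i$ does not require the type-I closed form you flag as an obstacle: it suffices to check $\tth_1(\vphi_1)=\th_1$ (a one-line computation from \eqref{eq:Li}, \eqref{eq:Ki} and \eqref{eq:as0asd1}) and then invoke the three-term recurrence encoded by Lemma~\ref{lem:typeIindep}. This route is more conceptual---it explains why the family $\tilde p(\zeta)$ was built to pass through $\Phi$---but the paper's direct degree argument is shorter for this particular lemma.
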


\begin{proof}
(i):
We show that $\tvphi_i (\zeta)$ is a polynomial in $\zeta$ with degree $1$ or $2$.
By Lemma \ref{lem:tvphi1tphi1} $\tvphi_1(\zeta)=\zeta$.
So we assume $2 \leq i \leq d$.
In view of \eqref{eq:typeItvphi}, first consider $K^*_i (\zeta)$.
Note that $q^{i-1}+1 \neq 0$; otherwise $q^{d-i} = q^{d-1} q^{1-i} = (-1)(-1)=1$.
By this and \eqref{eq:Lsi}, \eqref{eq:Ksi},  $K^*_i (\zeta)$ is a polynomial in $\zeta$ with degree $1$.
Next consider $K^\Downarrow_{d-i+1} (\zeta)$.
Note that $q^{i-2}+1 \neq 0$; otherwise $q^{d-i+1} = q^{d-1} q^{2-i}=(-1)(-1)=1$.
By this and Lemma \ref{lem:KDLDinv},
$K^\Downarrow_{d-i+1} (\zeta)$ is a polynomial in $\zeta$ with degree $1$.
By these comments and \eqref{eq:typeItvphi}, $\tvphi_i (\zeta)$ is a polynomial in $\zeta$
with degree $1$ or $2$.
The result follows.

(ii):
Similar to the proof of (i).
\end{proof}

\begin{lemma}    \label{lem:typeItparray}     \samepage
\ifDRAFT {\rm lem:typeItprray}. \fi
The sequence $\tilde{p}(\zeta)$ is a parameter array over $\F$
for infinitely many values of $\zeta$.  
\end{lemma}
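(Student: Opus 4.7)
The plan is to verify, for all but finitely many $\zeta \in \F^{\times}$, the five conditions of Lemma \ref{lem:classify} for the sequence $\tilde{p}(\zeta)$, and then invoke the fact that $\F$, being algebraically closed, is infinite.

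First, I would observe that conditions (iii), (iv), (v) of Lemma \ref{lem:classify} are satisfied for every $\zeta$. Condition (v) is immediate from Lemma \ref{lem:typeIindep}, which shows both indicated ratios are equal to $q+q^{-1}+1$ independently of $\zeta$. Conditions (iii) and (iv) are exactly the recurrences asserted in Lemma \ref{lem:typeItvphitphi}, since the scalar $\vth_i$ defined just before Lemma \ref{lem:typeIvth} is precisely the normalized sum $\sum_{\ell=0}^{i-1}(\tth_\ell(\zeta)-\tth_{d-\ell}(\zeta))/(\tth_0(\zeta)-\tth_d(\zeta))$ appearing in Lemma \ref{lem:classify}(iii), (iv). Note that the denominator $\tth_0(\zeta)-\tth_d(\zeta)$ equals $\th_0-\th_d \neq 0$ by Lemma \ref{lem:thcoincide}, so $\vth_i$ is well defined.

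Next I would handle the inequalities of Lemma \ref{lem:classify}(i), (ii). By Lemma \ref{lem:typeIcond(i)}, for each pair $0 \leq i < j \leq d$, the equality $\tth_i(\zeta)=\tth_j(\zeta)$ holds for at most one value of $\zeta$, and likewise $\tth^*_i(\zeta)=\tth^*_j(\zeta)$ holds for at most one value of $\zeta$; so condition (i) fails on at most $2\binom{d+1}{2}$ values of $\zeta$. By Lemma \ref{lem:typeIcond(ii)}, for each $1 \leq i \leq d$, each of $\tvphi_i(\zeta)=0$ and $\tphi_i(\zeta)=0$ holds for at most two values of $\zeta$; so condition (ii) fails on at most $4d$ values. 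Adding the exclusion $\zeta=0$ (necessary since $\tvphi_1(\zeta)=\zeta$ by Lemma \ref{lem:tvphi1tphi1}), the total set of excluded $\zeta$ is finite.

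Since $\F$ is algebraically closed, hence infinite, there remain infinitely many admissible $\zeta \in \F^{\times}$. For each such $\zeta$ the sequence $\tilde{p}(\zeta)$ satisfies conditions (i)--(v) of Lemma \ref{lem:classify} and is therefore a parameter array over $\F$, as required. No substantive obstacle arises at this stage; all of the real work is packaged in the preparatory Lemmas \ref{lem:typeItthitthj}--\ref{lem:typeItvphitphi}, and the final step is a finite-vs-infinite counting argument.
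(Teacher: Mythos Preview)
Your proposal is correct and follows essentially the same approach as the paper: verify conditions (iii)--(v) of Lemma~\ref{lem:classify} via Lemmas~\ref{lem:typeIindep} and~\ref{lem:typeItvphitphi}, then use Lemmas~\ref{lem:typeIcond(i)} and~\ref{lem:typeIcond(ii)} together with the infinitude of the algebraically closed field $\F$ to handle (i) and (ii). Your version supplies a few more details (the explicit finite bounds on the excluded $\zeta$, the well-definedness of $\vth_i$), but the argument is the same.
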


\begin{proof}
We verify conditions (i)--(v) in Lemma \ref{lem:classify}.
Conditions (iii) and (iv) are satisfied by Lemma \ref{lem:typeItvphitphi}.
Condition (v) is satisfied by Lemma \ref{lem:typeIindep}.
Note that $\F$ has infinitely many elements since $\F$ is algebraically closed.
By this and Lemmas \ref{lem:typeIcond(i)}, \ref{lem:typeIcond(ii)},
conditions (i) and (ii) are satisfied for infinitely many values of $\zeta$.
The result follows.
\end{proof}

\begin{lemma}   \label{lem:vphidphid2}    \samepage
\ifDRAFT {\rm lem:vphidphid2}. \fi
We have
\begin{align*}
 \tvphi_d (\zeta) &=
   \frac{a^*_d-\th^*_d}{a^*_0-\th^*_0} \, \zeta,
&
 \tphi_1 (\zeta) &=
   \frac{a^*_d-\th^*_0}{a^*_0-\th^*_0} \, \zeta,  
&
 \tphi_d (\zeta) &=
   \frac{a^*_0-\th^*_d}{a^*_0-\th^*_0} \, \zeta.
\end{align*}
\end{lemma}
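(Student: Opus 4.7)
The plan is to deduce Lemma \ref{lem:vphidphid2} directly from Lemma \ref{lem:tvphi1tphi1} by swapping each ratio of ``principal-sequence'' end-entries for the corresponding ratio of ``dual principal-sequence'' end-entries. Since we are in Case (i), the standing hypothesis $\tvphi_1+\tvphi_d = \tphi_1+\tphi_d$ of this section says precisely that $\vphi_1+\vphi_d = \phi_1+\phi_d$ for $\Phi$ itself, so the hypothesis of Lemma \ref{lem:Delta0relations} is in force, and equations \eqref{eq:a0th0/adth0}, \eqref{eq:a0thd/a0th0}, \eqref{eq:a0th0/adthd} are available.

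First I would recall from Lemma \ref{lem:tvphi1tphi1} the three relevant identities
\[
 \tvphi_d(\zeta) = \frac{a_d-\th_d}{a_0-\th_0}\,\zeta,\qquad
 \tphi_1(\zeta) = \frac{a_0-\th_d}{a_0-\th_0}\,\zeta,\qquad
 \tphi_d(\zeta) = \frac{a_d-\th_0}{a_0-\th_0}\,\zeta.
\]
Then I would substitute, term by term, using Lemma \ref{lem:Delta0relations}. Namely, equation \eqref{eq:a0th0/adthd} turns the coefficient of $\tvphi_d(\zeta)$ into $(a^*_d-\th^*_d)/(a^*_0-\th^*_0)$; equation \eqref{eq:a0thd/a0th0} turns the coefficient of $\tphi_1(\zeta)$ into $(a^*_d-\th^*_0)/(a^*_0-\th^*_0)$; and equation \eqref{eq:a0th0/adth0} turns the coefficient of $\tphi_d(\zeta)$ into $(a^*_0-\th^*_d)/(a^*_0-\th^*_0)$. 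This yields the three asserted formulas exactly.

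Essentially no obstacle arises here: the argument is just a bookkeeping step that rewrites the expressions from Lemma \ref{lem:tvphi1tphi1} in a form that makes the dependence on the starred end-entries visible, which is what will be needed in the next section when we verify that the modified end-entries of $\widetilde{\Phi}(\zeta)$ coincide with those of $\Phi$. The only point worth double-checking is that all denominators $a_0-\th_0$ and $a^*_0-\th^*_0$ are nonzero, which is guaranteed by Lemma \ref{lem:a0neqth0}.
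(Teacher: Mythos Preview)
Your argument is correct and is exactly the approach taken in the paper: the paper's proof reads ``Follows from Lemma \ref{lem:tvphi1tphi1} using \eqref{eq:a0th0/adth0}--\eqref{eq:a0th0/adthd},'' which is precisely your substitution of the three ratios from Lemma \ref{lem:Delta0relations} into the formulas of Lemma \ref{lem:tvphi1tphi1}. One small wording slip: the standing hypothesis of the section is $\vphi_1+\vphi_d=\phi_1+\phi_d$ for $\Phi$ (not for $\tilde{\Phi}(\zeta)$), so you can invoke Lemma \ref{lem:Delta0relations} directly without the detour through the tilde parameters.
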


\begin{proof}
Follows from Lemma \ref{lem:tvphi1tphi1} using  
\eqref{eq:a0th0/adth0}--\eqref{eq:a0th0/adthd}.
\end{proof}

\begin{lemma}    \label{lem:typeIa0ad}   \samepage
\ifDRAFT {\rm lem:typeIa0ad}. \fi
Assume $\tilde{p}(\zeta)$ is a parameter array of a Leonard system $\tilde{\Phi}(\zeta)$.
Let $\{\ta_i (\zeta)\}_{i=0}^d$ (resp.\ $\{\ta^*_i (\zeta) \}_{i=0}^d$) be the principal
sequence (resp.\ dual principal sequence) of $\tilde{\Phi} (\zeta)$.
Then
\[
 \ta_0 (\zeta) = a_0,  \qquad
 \ta_d (\zeta) = a_d,  \qquad
 \ta^*_0 (\zeta) = a^*_0, \qquad
 \ta^*_d (\zeta) = a^*_d.
\]
\end{lemma}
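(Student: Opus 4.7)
The plan is to apply Lemma \ref{lem:a0th0} to the Leonard system $\tilde{\Phi}(\zeta)$, which yields
\begin{align*}
 \ta_0(\zeta) - \tth_0(\zeta) &= \frac{\tvphi_1(\zeta)}{\tth^*_0(\zeta) - \tth^*_1(\zeta)},
 &
 \ta_d(\zeta) - \tth_d(\zeta) &= \frac{\tvphi_d(\zeta)}{\tth^*_d(\zeta) - \tth^*_{d-1}(\zeta)},
\\
 \ta^*_0(\zeta) - \tth^*_0(\zeta) &= \frac{\tvphi_1(\zeta)}{\tth_0(\zeta) - \tth_1(\zeta)},
 &
 \ta^*_d(\zeta) - \tth^*_d(\zeta) &= \frac{\tvphi_d(\zeta)}{\tth_d(\zeta) - \tth_{d-1}(\zeta)},
\end{align*}
and then to evaluate each right-hand side using the explicit formulas already established. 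By Lemma \ref{lem:thcoincide} the four end eigenvalues on the left coincide with $\th_0, \th_d, \th^*_0, \th^*_d$, so it suffices to match each fraction on the right with the corresponding quantity $a_0-\th_0$, $a_d-\th_d$, $a^*_0-\th^*_0$, $a^*_d-\th^*_d$.

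First I would handle the ``$0$-end'' formulas. By Lemma \ref{lem:tvphi1tphi1} we have $\tvphi_1(\zeta)=\zeta$. Specializing Lemma \ref{lem:typeItthitthj} to $(i,j)=(0,1)$ gives $\tth_0(\zeta)-\tth_1(\zeta) = \zeta/(a^*_0-\th^*_0)$, and the dual specialization gives $\tth^*_0(\zeta)-\tth^*_1(\zeta) = \zeta/(a_0-\th_0)$. Dividing $\tvphi_1(\zeta)=\zeta$ by these quantities yields $\ta^*_0(\zeta)-\th^*_0 = a^*_0-\th^*_0$ and $\ta_0(\zeta)-\th_0 = a_0-\th_0$, so $\ta_0(\zeta)=a_0$ and $\ta^*_0(\zeta)=a^*_0$.

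Next I would handle the ``$d$-end'' formulas. By Lemma \ref{lem:vphidphid2}, $\tvphi_d(\zeta) = \frac{a^*_d-\th^*_d}{a^*_0-\th^*_0}\zeta$. Specializing Lemma \ref{lem:typeItthitthj} to $(i,j)=(d-1,d)$ and using $q^{d-1}=-1$ (so $q^{2d-2}=1$, making the second term in the large parenthesis vanish), a short computation gives
\[
 \tth_d(\zeta)-\tth_{d-1}(\zeta) = \frac{\zeta}{a^*_0-\th^*_0},
 \qquad
 \tth^*_d(\zeta)-\tth^*_{d-1}(\zeta) = \frac{\zeta}{a_0-\th_0}.
\]
Dividing $\tvphi_d(\zeta)$ by these yields
\[
 \ta^*_d(\zeta)-\th^*_d = a^*_d-\th^*_d,
 \qquad
 \ta_d(\zeta)-\th_d = \frac{(a^*_d-\th^*_d)(a_0-\th_0)}{a^*_0-\th^*_0}.
\]
The first identity gives $\ta^*_d(\zeta)=a^*_d$ immediately. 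For the second, I would invoke \eqref{eq:a0th0/adthd} from Lemma \ref{lem:Delta0relations}, which applies because we are in the case $\vphi_1+\vphi_d=\phi_1+\phi_d$; this identifies the right-hand side with $a_d-\th_d$, giving $\ta_d(\zeta)=a_d$.

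The computation is essentially mechanical; the only delicate point—the one I would call out as the main obstacle—is the collapse of the formula for $\tth^*_d-\tth^*_{d-1}$ (and its dual). This relies crucially on $q^{2d-2}=1$, which follows from $q^{d-1}=-1$, the defining feature of Case (i). Without it one would not get the clean cancellation needed to match the end-entries.
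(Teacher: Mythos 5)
Your argument is correct, and it reaches the conclusion by a slightly different route than the paper. The paper applies Lemma \ref{lem:key2} to $\tilde{\Phi}(\zeta)$, obtaining for instance $\ta_0(\zeta)=\bigl(\th_0\tphi_1(\zeta)-\th_d\tvphi_1(\zeta)\bigr)/\bigl(\tphi_1(\zeta)-\tvphi_1(\zeta)\bigr)$, and then only needs Lemmas \ref{lem:thcoincide}, \ref{lem:tvphi1tphi1} and \ref{lem:vphidphid2} to evaluate; no eigenvalue differences of $\tilde{\Phi}(\zeta)$ beyond the end ones ever enter. You instead apply Lemma \ref{lem:a0th0}, which forces you to compute $\tth_0(\zeta)-\tth_1(\zeta)$, $\tth_d(\zeta)-\tth_{d-1}(\zeta)$ and their starred analogues from Lemma \ref{lem:typeItthitthj}, using the collapse $q^{2d-2}=1$; these computations are done correctly. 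You also need one extra appeal to \eqref{eq:a0th0/adthd} of Lemma \ref{lem:Delta0relations} to convert $(a^*_d-\th^*_d)(a_0-\th_0)/(a^*_0-\th^*_0)$ into $a_d-\th_d$, which is legitimate since $\vphi_1+\vphi_d=\phi_1+\phi_d$ holds throughout Case (i); this detour could even be skipped by taking $\tvphi_d(\zeta)=\frac{a_d-\th_d}{a_0-\th_0}\,\zeta$ from Lemma \ref{lem:tvphi1tphi1} when treating $\ta_d(\zeta)$, reserving the form in Lemma \ref{lem:vphidphid2} for $\ta^*_d(\zeta)$. Both routes are valid; the paper's is a bit shorter because Lemma \ref{lem:key2} packages the relevant ratios so that neither the near-end eigenvalues of $\tilde{\Phi}(\zeta)$ nor the relations of Lemma \ref{lem:Delta0relations} are needed, whereas your version makes explicit use of the special identity $q^{d-1}=-1$ at this step.
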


\begin{proof}
Applying Lemma \ref{lem:key2} to $\tilde{\Phi}(\zeta)$ and
using Lemma \ref{lem:thcoincide},
\[
 \ta_0 (\zeta) = \frac{\th_0 \tphi_1(\zeta) - \th_d \tvphi_1(\zeta) }
                   {\tphi_1(\zeta) - \tvphi_1(\zeta)}.
\]
In this equation, eliminate $\tvphi_1(\zeta)$ and $\tphi_1(\zeta)$ using
Lemma \ref{lem:tvphi1tphi1} to get $\ta_0(\zeta) = a_0$.
The remaining three equations are obtained in a similar way
using Lemmas \ref{lem:tvphi1tphi1} and \ref{lem:vphidphid2}.
\end{proof}

\begin{proofof}{Proposition \ref{prop:typeI}}
By Lemma \ref{lem:typeItparray} $\tilde{p}(\zeta)$ is a parameter array over $\F$ 
for infinitely many values of $\zeta$.
Assume $\tilde{p}(\zeta)$ is the parameter array of a Leonard system
$\tilde{\Phi} (\zeta)$.
By Lemma \ref{lem:typeIindep} $\tilde{\Phi}(\zeta)$ has the same fundamental parameter
as $\Phi$.
By Lemmas \ref{lem:thcoincide} and \ref{lem:typeIa0ad} 
$\tilde{\Phi}(\zeta)$ has the same end-entries as $\Phi$.
\end{proofof}

\begin{proofof}{Theorem \ref{thm:main2}; case (i)}
Follows from Proposition \ref{prop:typeI}
and Lemmas \ref{lem:characterize}, \ref{lem:classify}.
\end{proofof}

\section{Proof of Theorem \ref{thm:main2}; part 2}
\label{sec:proof2(ii)}

In this section we prove Theorem \ref{thm:main2} for Case (ii).
Let $\Phi$ be a Leonard system of type IV. 
By Lemma \ref{lem:types}(v) $\Phi$ has diameter $d=3$.
Note that $\text{Char}(\F)=2$.
Let
$(\{\th_i\}_{i=0}^3, \{\th^*_i\}_{i=0}^3$, $\{\vphi_i\}_{i=1}^3, \{\phi_i\}_{i=1}^3)$
be the parameter array of $\Phi$.
Let $\{a_i\}_{i=0}^3$ (resp.\ $\{a^*_i\}_{i=0}^3$) be the principal sequence (resp.\ duall
principal sequence) of $\Phi$.

For a nonzero scalar $\zeta \in \F$ we define a sequence 
\[
  \tilde{p}(\zeta) =
  (\{\tth_i(\zeta)\}_{i=0}^3, \{\tth^*_i(\zeta)\}_{i=0}^3, 
\{\tvphi_i(\zeta) \}_{i=1}^3, \{\tphi_i(\zeta)\}_{i=1}^3)
\]
as follows.
Define
\begin{align*}
\tth_0 (\zeta)  &= \th_0, &
\tth_1 (\zeta) &= \th_0 + \frac{\zeta}{a^*_0-\th^*_0}, &
\tth_2 (\zeta) &= \th_3 + \frac{\zeta}{a^*_0-\th^*_0}, &
\tth_3 (\zeta) &= \th_3,
\\
\tth^*_0 (\zeta) &= \th^*_0, &
\tth^*_1 (\zeta) &= \th^*_0 + \frac{\zeta}{a_0-\th_0}, &
\tth^*_2 (\zeta) &= \th^*_3 + \frac{\zeta}{a_0-\th_0}, &
\tth^*_3 (\zeta) &= \th^*_3,
\end{align*}
and define
\begin{align*}
 \tvphi_1 (\zeta) &= \zeta, 
\\
 \tvphi_2(\zeta) &=
   \left( \th^*_0 - \th^*_3 + \frac{\zeta}{a_0-\th_0} \right)
   \left( \th_0 - \th_3 + \frac{\zeta}{a^*_0-\th^*_0} \right), 
\\
 \tvphi_3 (\zeta) &= \frac{a_3 - \th_3}{a_0-\th_0} \, \zeta,
\\
 \tphi_1 (\zeta) &= \frac{a_0-\th_3}{a_0-\th_0} \, \zeta
\\
 \tphi_2 (\zeta) &= 
   \left( \th^*_0 - \th^*_3 + \frac{\zeta}{a_0-\th_0}  \right)
   \left( \th_0 - \th_3 + \frac{\zeta}{a^*_0-\th^*_0} \right),  
\\
 \tphi_3 (\zeta) &= \frac{a_3-\th_0}{a_0-\th_0} \, \zeta. 
\end{align*}
We have defined $\tilde{p}(\zeta)$.
We prove the following result:

\begin{proposition}   \label{prop:typeIV}   \samepage
\ifDRAFT {\rm prop:typeIV}. \fi
The sequence $\tilde{p}(\zeta)$ is the parameter array  over $\F$
for infinitely many values of $\zeta$.
Assume $\tilde{p}(\zeta)$ is the parameter array of a Leonard system
$\tilde{\Phi}(\zeta)$.
Then $\tilde{\Phi}(\zeta)$ has the same fundamental parameter and the
same end-entries as $\Phi$.
\end{proposition}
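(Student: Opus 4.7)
The plan is to verify conditions (i)--(v) of Lemma \ref{lem:classify} for $\tilde{p}(\zeta)$, and then confirm that the resulting Leonard system $\tilde{\Phi}(\zeta)$ has the same fundamental parameter and end-entries as $\Phi$. The approach parallels that of Proposition \ref{prop:typeI}, but the verifications are much shorter because $d=3$ and $\text{Char}(\F)=2$.

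Condition (v) is the easiest. Since $d=3$, it reduces to the single requirement at $i=2$. A direct computation from the definitions gives $\tth_0(\zeta)-\tth_3(\zeta)=\th_0-\th_3=\tth_1(\zeta)-\tth_2(\zeta)$, and likewise for the starred version, so both ratios in \eqref{eq:indep} equal $1$. This verifies (v) and simultaneously shows that $\tilde{\Phi}(\zeta)$ has fundamental parameter $0$, matching $\Phi$.

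For conditions (iii) and (iv), each gives three equations, one for each $i \in \{1,2,3\}$. The sum $\sum_{\ell=0}^{i-1}(\tth_\ell-\tth_{3-\ell})/(\tth_0-\tth_3)$ that appears there equals $1$ at $i=1,3$ and equals $2=0$ at $i=2$. Using these simplifications together with the relations of Lemma \ref{lem:Delta0relations}---which apply because $\Phi$ has type IV and hence satisfies $\vphi_1+\vphi_3=\phi_1+\phi_3$---each of the six identities collapses to a short check by direct substitution of the defining formulas, using $2=0$ throughout. For (i) and (ii), I would observe that each of the differences $\tth_i(\zeta)-\tth_j(\zeta)$, $\tth^*_i(\zeta)-\tth^*_j(\zeta)$ with $0\le i<j\le 3$ and each of $\tvphi_i(\zeta)$, $\tphi_i(\zeta)$ for $1\le i\le 3$ is a nonzero polynomial in $\zeta$ of degree at most $2$; since $\F$ is algebraically closed and hence infinite, only finitely many $\zeta$ violate (i) or (ii), so $\tilde{p}(\zeta)$ is a parameter array for infinitely many $\zeta$.

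For the end-entries, the equalities $\tth_0(\zeta)=\th_0$, $\tth_3(\zeta)=\th_3$, $\tth^*_0(\zeta)=\th^*_0$, $\tth^*_3(\zeta)=\th^*_3$ are immediate from the definitions. Assuming $\tilde{p}(\zeta)$ is the parameter array of a Leonard system $\tilde{\Phi}(\zeta)$, I would apply Lemma \ref{lem:key2} to $\tilde{\Phi}(\zeta)$ and substitute the explicit forms of $\tvphi_1$, $\tvphi_3$, $\tphi_1$, $\tphi_3$ to obtain $\ta_0(\zeta)=a_0$, $\ta_3(\zeta)=a_3$, $\ta^*_0(\zeta)=a^*_0$, $\ta^*_3(\zeta)=a^*_3$, exactly as in Lemma \ref{lem:typeIa0ad}. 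The main obstacle is the verification of (iii) at $i=3$ (and symmetrically (iv) at $i=1$), where one must rewrite $(\tth^*_3-\tth^*_0)(\tth_2-\tth_3)$ via \eqref{eq:a0ad/a0th0} and then exploit $a_0+a_0=0$ to recover $(a_3-\th_3)\zeta/(a_0-\th_0)$; all other identities are short once this substitution pattern is in place.
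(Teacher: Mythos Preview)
Your proposal is correct and follows essentially the same route as the paper: verify condition (v) directly (both ratios equal $1$), compute the sums $\vth_i$ as $1,0,1$, reduce (iii)--(iv) to identities coming from Lemma \ref{lem:Delta0relations}, handle (i)--(ii) by the degree-bound/infinite-field argument, and recover the end-entries via Lemma \ref{lem:key2}. The only cosmetic difference is that the paper's verification of (iii)--(iv) (Lemma \ref{lem:typeIVtvphitphi}) cites only \eqref{eq:a0th0/adth0}, whereas you invoke the additional relation \eqref{eq:a0ad/a0th0}; in characteristic $2$ these are interchangeable, so nothing substantive changes.
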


The following three lemmas can be routinely verified.

\begin{lemma}   \label{lem:typeIVthithj}   \samepage
\ifDRAFT {\rm lem:typeIVthithj}. \fi
We have
\begin{align*}
\tth_0 (\zeta) - \tth_1 (\zeta) &= \frac{\zeta}{a^*_0-\th^*_0}, 
&
\tth_0 (\zeta) - \tth_2 (\zeta)  &= \th_0-\th_3 + \frac{\zeta}{a^*_0-\th^*_0}, 
\\
\tth_0 (\zeta) -\tth_3 (\zeta) &= \th_0-\th_3,
&
\tth_1 (\zeta) - \tth_2(\zeta) &= \th_0-\th_3, 
\\
\tth_1 (\zeta) - \tth_3(\zeta) &= \th_0-\th_3 + \frac{\zeta}{a^*_0-\th^*_0}, 
&
\tth_2 (\zeta) - \tth_3(\zeta) &= \frac{\zeta}{a^*_0-\th^*_0},
\\
\tth^*_0 (\zeta) - \tth^*_1(\zeta) &= \frac{\zeta}{a_0-\th_0},
&
\tth^*_0 (\zeta) - \tth^*_2(\zeta) &= \th^*_0-\th^*_3 + \frac{\zeta}{a_0-\th_0}, 
\\
\tth^*_0 (\zeta) - \tth^*_3(\zeta) &= \th^*_0-\th^*_3,
&
\tth^*_1 (\zeta) - \tth^*_2(\zeta) &= \th^*_0-\th^*_3, 
\\
\tth^*_1 (\zeta) - \tth^*_3(\zeta) &= \th^*_0-\th^*_3 + \frac{\zeta}{a_0-\th_0},
 &
\tth^*_2 (\zeta) - \tth^*_3(\zeta) &= \frac{\zeta}{a_0-\th_0}.
\end{align*}
\end{lemma}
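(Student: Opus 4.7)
The plan is to verify each of the twelve stated equalities by direct substitution into the definitions of $\tth_i(\zeta)$ and $\tth^*_i(\zeta)$ given just before the lemma. The structure is transparent: $\tth_0(\zeta)$ and $\tth_3(\zeta)$ agree with $\th_0$ and $\th_3$, while $\tth_1(\zeta)$ is obtained from $\th_0$ and $\tth_2(\zeta)$ from $\th_3$ by the common additive shift $\zeta/(a^*_0-\th^*_0)$. So each of the six differences $\tth_i(\zeta)-\tth_j(\zeta)$ collapses to one of three possibilities: the pure shift $\zeta/(a^*_0-\th^*_0)$ (when the indices $\{i,j\}$ are $\{0,1\}$ or $\{2,3\}$), the pure classical gap $\th_0-\th_3$ (when $\{i,j\}=\{0,3\}$ or $\{1,2\}$), or the sum of the two (when $\{i,j\}=\{0,2\}$ or $\{1,3\}$).

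The only observation needed to match the signs in the claimed forms is that $\Phi$ has type IV, so by Definition \ref{def:types} we have $\text{Char}(\F)=2$, hence $-1=1$ in $\F$; in particular $-\zeta/(a^*_0-\th^*_0)=\zeta/(a^*_0-\th^*_0)$, and similarly for the starred shift. This is why each displayed difference is written with a plus sign. The six starred identities are formally identical to the six unstarred ones under the substitutions $a^*_0-\th^*_0\leftrightarrow a_0-\th_0$ and $\th_0-\th_3\leftrightarrow \th^*_0-\th^*_3$ (equivalently, by applying the $D_4$ operation $*$ of Section \ref{sec:D4}), so it suffices to verify the first six.

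I expect no genuine obstacle: the verifications are mechanical one-line subtractions in $\F$. The only point worth stating in the proof itself is the characteristic-$2$ reduction of signs, after which all twelve equalities are immediate from the definitions.
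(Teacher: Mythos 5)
Your verification is correct and is essentially the paper's own argument: the paper simply declares this lemma "routinely verified," and the routine verification is exactly the direct subtraction from the definitions that you carry out, with the sign discrepancies in the $\{0,1\}$ and $\{0,2\}$ differences resolved by $\mathrm{Char}(\F)=2$ (type IV), as you correctly note.
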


\begin{lemma}   \label{lem:typeIVindep}    \samepage
\ifDRAFT {\rm lem:typeIVindep}. \fi
Each of the expressions 
\[
  \frac{\tth_0 (\zeta) - \tth_3 (\zeta)}{\tth_1 (\zeta) - \tth_2(\zeta)},     \qquad\qquad
  \frac{\tth^*_0 (\zeta) - \tth^*_3(\zeta) }{\tth^*_1(\zeta) - \tth^*_2(\zeta)}
\]
is equal to $1$.
\end{lemma}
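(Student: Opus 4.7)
The plan is to deduce this statement in one step from the preceding Lemma \ref{lem:typeIVthithj}, which already supplies closed-form formulas for every pairwise difference $\tth_i(\zeta) - \tth_j(\zeta)$ and $\tth^*_i(\zeta) - \tth^*_j(\zeta)$. First I would read off from that lemma the two identities
\[
\tth_0(\zeta) - \tth_3(\zeta) = \th_0 - \th_3, \qquad \tth_1(\zeta) - \tth_2(\zeta) = \th_0 - \th_3,
\]
and observe that since $\{\th_i\}_{i=0}^3$ are mutually distinct by Lemma \ref{lem:classify}(i), the common value $\th_0 - \th_3$ is nonzero. Dividing then shows the first of the two ratios equals $1$.

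Second, I would handle the starred expression identically: Lemma \ref{lem:typeIVthithj} supplies $\tth^*_0(\zeta) - \tth^*_3(\zeta) = \th^*_0 - \th^*_3$ and $\tth^*_1(\zeta) - \tth^*_2(\zeta) = \th^*_0 - \th^*_3$, and the analogous nondegeneracy of the starred eigenvalues furnishes a nonzero denominator, yielding the value $1$ again. There is really no obstacle here: the particular shifts used to define $\tth_i(\zeta)$ and $\tth^*_i(\zeta)$ at the start of Section \ref{sec:proof2(ii)} were chosen precisely so that the two ``middle'' eigenvalues are each translated by a common amount, namely $\zeta/(a^*_0-\th^*_0)$ in the unstarred case and $\zeta/(a_0-\th_0)$ in the starred case, so that the inner difference matches the outer difference term-for-term. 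This lemma is exactly the combinatorial reflection of the fact that the fundamental parameter of the prospective Leonard system $\tilde{\Phi}(\zeta)$ agrees with the fundamental parameter $\beta = 0$ of $\Phi$ in type IV, which is the whole reason the construction in Section \ref{sec:proof2(ii)} was set up this way.
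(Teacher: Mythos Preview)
Your proof is correct and matches the paper's approach: the paper groups this lemma with Lemmas \ref{lem:typeIVthithj} and \ref{lem:typeIVvth} under the remark that they ``can be routinely verified,'' and the routine verification is precisely to read off the four differences from Lemma \ref{lem:typeIVthithj} and cancel. Your observation that $\th_0 - \th_3$ and $\th^*_0 - \th^*_3$ are nonzero by Lemma \ref{lem:classify}(i) is exactly what is needed to make the quotients well-defined.
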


For $1 \leq i \leq 3$ define
\[
 \vth_i = \sum_{\ell=0}^{i-1}
   \frac{\tth_\ell (\zeta) - \tth_{d-\ell} (\zeta)}
          {\tth_0 (\zeta) - \tth_d (\zeta)}.
\]

\begin{lemma}    \label{lem:typeIVvth}   \samepage
\ifDRAFT {\rm lem:typeIVvth}. \fi
We have
\begin{align*}
 \vth_1 &= 1, & \vth_2 &=0, & \vth_3 &= 1.
\end{align*}
\end{lemma}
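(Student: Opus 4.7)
The plan is to prove Lemma \ref{lem:typeIVvth} by directly unrolling each partial sum defining $\vth_i$ and reading off the relevant differences from Lemma \ref{lem:typeIVthithj}. The key observation is that two of the tabulated differences are independent of $\zeta$: namely $\tth_0(\zeta) - \tth_3(\zeta) = \th_0 - \th_3$ and $\tth_1(\zeta) - \tth_2(\zeta) = \th_0 - \th_3$. So each summand appearing in $\vth_1$, $\vth_2$, $\vth_3$ will simplify to $\pm 1$, and the lemma will reduce to an arithmetic identity in the ground field.

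First I would treat $\vth_1$: the sum has a single term $(\tth_0 - \tth_3)/(\tth_0 - \tth_3) = 1$, so $\vth_1 = 1$ with no computation. Next I would compute $\vth_2 = \vth_1 + (\tth_1(\zeta) - \tth_2(\zeta))/(\tth_0(\zeta) - \tth_3(\zeta))$; by Lemma \ref{lem:typeIVthithj} both numerator and denominator equal $\th_0 - \th_3$, so this ratio is $1$ and $\vth_2 = 1 + 1$. Here I would invoke the standing assumption $\text{Char}(\F) = 2$ (from $\Phi$ being of type IV, together with Lemma \ref{lem:types}(v)) to conclude $\vth_2 = 0$. Finally, $\vth_3 = \vth_2 + (\tth_2(\zeta) - \tth_1(\zeta))/(\tth_0(\zeta) - \tth_3(\zeta))$; since $\tth_2 - \tth_1 = -(\tth_1 - \tth_2) = \th_0 - \th_3$ in characteristic $2$, this ratio is again $1$, and so $\vth_3 = 0 + 1 = 1$.

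There is no real obstacle in this proof: once one has Lemma \ref{lem:typeIVthithj} in hand, the entire content of the lemma is the observation that $1 + 1 = 0$ in $\F$, which is exactly the defining feature of type IV. The only care required is in bookkeeping the signs, but since $\text{Char}(\F) = 2$ makes signs irrelevant, even that step is trivial.
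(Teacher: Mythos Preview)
Your proof is correct and matches the paper's approach, which simply declares the lemma ``routinely verified'' without further detail. One minor simplification: for $\vth_3$ you do not actually need to invoke $\text{Char}(\F)=2$, since the $\ell=1$ and $\ell=2$ summands are negatives of each other and cancel in any characteristic, leaving $\vth_3 = \vth_1 = 1$.
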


\begin{lemma}   \label{lem:typeIVtvphitphi}   \samepage
\ifDRAFT {\rm lem:typeIVtvphitphi}. \fi
For $1 \leq i \leq 3$
\begin{align*}
  \tvphi_i (\zeta) &= 
    \tphi_1(\zeta) \vth_i + (\tth^*_i (\zeta) - \tth^*_0(\zeta) )(\tth_{i-1}(\zeta) - \tth_3(\zeta)),
\\
 \tphi_i (\zeta) &= 
   \tvphi_1(\zeta) \vth_i + (\tth^*_i (\zeta) - \tth^*_0(\zeta) )(\tth_{3-i+1} (\zeta) - \tth_0(\zeta) ).
\end{align*}
\end{lemma}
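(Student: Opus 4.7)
The plan is straightforward verification, case by case in $i$, since all quantities appearing on both sides have been given by explicit formulas in the construction of $\tilde{p}(\zeta)$.

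First I would invoke Lemma \ref{lem:typeIVvth} to replace $(\vth_1,\vth_2,\vth_3)$ by $(1,0,1)$, so that the two identities reduce to six explicit equalities. I would then use Lemma \ref{lem:typeIVthithj} to express every difference $\tth_i(\zeta)-\tth_j(\zeta)$ and $\tth^*_i(\zeta)-\tth^*_j(\zeta)$ in closed form; since $\mathrm{Char}(\F)=2$, signs will play no role and I may freely rewrite $\tth^*_2(\zeta)-\tth^*_0(\zeta)$ as $\th^*_0-\th^*_3+\zeta/(a_0-\th_0)$, and similarly for all the other reversed differences.

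The case $i=2$ is immediate: with $\vth_2=0$, the right-hand sides become
$(\tth^*_2(\zeta)-\tth^*_0(\zeta))(\tth_1(\zeta)-\tth_3(\zeta))$ and $(\tth^*_2(\zeta)-\tth^*_0(\zeta))(\tth_2(\zeta)-\tth_0(\zeta))$, and Lemma \ref{lem:typeIVthithj} together with characteristic $2$ shows both factored products coincide with the defining formulas for $\tvphi_2(\zeta)$ and $\tphi_2(\zeta)$. The cases $i=1$ and $i=3$ are each a single $\F$-linear identity in $\zeta$: after substitution, matching the coefficient of $\zeta$ reduces to an equality between certain fractions involving $\th_0,\th_3,\th^*_0,\th^*_3,a_0,a_3,a^*_0,a^*_3$. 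For instance, the $i=3$ case of the first identity demands
\[
\frac{a_0-\th_3}{a_0-\th_0}+\frac{\th^*_3-\th^*_0}{a^*_0-\th^*_0}=\frac{a_3-\th_3}{a_0-\th_0},
\]
which, because Proposition \ref{prop:Delta0} gives $\vphi_1+\vphi_d=\phi_1+\phi_d$ for type IV, follows from equation \eqref{eq:a0ad/a0th0} of Lemma \ref{lem:Delta0relations} after the characteristic-$2$ cancellation $2a_0=0$. The analogous $i=1$ case, and the $i=1,3$ cases of the second identity, reduce in the same way to one of the four identities \eqref{eq:a0th0/adth0}--\eqref{eq:a0ad/a0th0}.

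The main obstacle, such as it is, lies in the $i=1$ and $i=3$ verifications: they are the only places where the identity does not follow from direct substitution alone but truly exploits the hypothesis $\vphi_1+\vphi_d=\phi_1+\phi_d$ through Lemma \ref{lem:Delta0relations}. Everything else is formal bookkeeping, with characteristic $2$ repeatedly collapsing the two constant terms $a_0-\th_3$ and $a_0-a_3$ into a single $(a_3+\th_3)/(a_0+\th_0)$ on the appropriate side.
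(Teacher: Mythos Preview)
Your approach is correct and essentially identical to the paper's: a routine case-by-case verification using Lemmas \ref{lem:typeIVthithj} and \ref{lem:typeIVvth}, with the $i=3$ cases handled via the identities of Lemma \ref{lem:Delta0relations} (the paper cites only \eqref{eq:a0th0/adth0}, from which the others follow in characteristic $2$). One small overstatement: the $i=1$ cases actually follow from direct substitution and characteristic-$2$ arithmetic alone, without invoking Lemma \ref{lem:Delta0relations}; only $i=3$ genuinely needs the hypothesis $\vphi_1+\vphi_d=\phi_1+\phi_d$.
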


\begin{proof}
Routine verification using Lemmas \ref{lem:typeIVthithj},  \ref{lem:typeIVvth}
and \eqref{eq:a0th0/adth0}.
\end{proof}

\begin{lemma}    \label{lem:typeIVtparray}     \samepage
\ifDRAFT {\rm lem:typeIVtprray}. \fi
The sequence $\tilde{p}(\zeta)$ is a parameter array over $\F$
for infinitely many values of $\zeta$.  
\end{lemma}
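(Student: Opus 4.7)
The plan is to verify the five conditions of Lemma \ref{lem:classify} for the sequence $\tilde{p}(\zeta)$, treating $\zeta$ as a parameter and then restricting to those values in $\F$ for which all conditions hold. Conditions (iii) and (iv), which express $\tvphi_i$ and $\tphi_i$ via the recurrence in terms of $\tth_i$, $\tth^*_i$, follow immediately from Lemma \ref{lem:typeIVtvphitphi}. Condition (v) is non-vacuous only at $i=2$ since $d=3$, and Lemma \ref{lem:typeIVindep} states that both expressions in \eqref{eq:indep} equal $1$; so the required equality and $i$-independence are automatic.

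The substance of the proof is showing that conditions (i) and (ii) fail for only finitely many $\zeta$. For condition (i) I would read off from Lemma \ref{lem:typeIVthithj} that each difference $\tth_i(\zeta) - \tth_j(\zeta)$ (and similarly for $\tth^*$) is either the nonzero constant $\th_0 - \th_3$ or $\th^*_0 - \th^*_3$ (nonzero by Lemma \ref{lem:classify}(i) applied to $\Phi$), or a degree-one polynomial in $\zeta$ whose leading coefficient is $(a^*_0 - \th^*_0)^{-1}$ or $(a_0 - \th_0)^{-1}$ (nonzero by Lemma \ref{lem:a0neqth0}). Consequently each equation $\tth_i(\zeta) = \tth_j(\zeta)$ or $\tth^*_i(\zeta) = \tth^*_j(\zeta)$ admits at most one $\zeta$.

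For condition (ii) I would inspect the six expressions directly: $\tvphi_1$, $\tvphi_3$, $\tphi_1$, $\tphi_3$ are visibly linear polynomials in $\zeta$ with leading coefficients $1$, $(a_3-\th_3)(a_0-\th_0)^{-1}$, $(a_0-\th_3)(a_0-\th_0)^{-1}$, $(a_3-\th_0)(a_0-\th_0)^{-1}$, each nonzero by Lemma \ref{lem:a0neqth0}; while $\tvphi_2$ and $\tphi_2$ coincide and are quadratic polynomials in $\zeta$ with leading coefficient $\bigl((a_0-\th_0)(a^*_0-\th^*_0)\bigr)^{-1}$. Hence each of the six polynomials has only finitely many roots.

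Since $\F$ is algebraically closed it is infinite, so after discarding the finitely many $\zeta$ at which (i) or (ii) fails, infinitely many values of $\zeta$ remain, and for all such $\zeta$ the sequence $\tilde{p}(\zeta)$ is a parameter array over $\F$. I do not anticipate a serious obstacle; the only point requiring care is that we are in characteristic $2$, but every nonvanishing invoked above reduces to one of $\th_0 \neq \th_3$, $\th^*_0 \neq \th^*_3$, $a_0 \neq \th_0$, $a^*_0 \neq \th^*_0$, all valid for $\Phi$ regardless of characteristic.
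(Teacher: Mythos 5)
Your proof is correct and follows essentially the same route as the paper: conditions (iii), (iv), (v) of Lemma \ref{lem:classify} come from Lemmas \ref{lem:typeIVtvphitphi} and \ref{lem:typeIVindep}, and conditions (i), (ii) fail for only finitely many $\zeta$ because the relevant differences and the $\tvphi_i(\zeta)$, $\tphi_i(\zeta)$ are nonzero constants or polynomials of degree at most two with nonzero leading coefficients, while $\F$ is infinite. Your explicit identification of the leading coefficients (and the remark about characteristic $2$) only makes more precise what the paper's proof states as an observation.
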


\begin{proof}
We verify conditions (i)--(v) in Lemma \ref{lem:classify}.
Conditions (iii) and (iv) are satisfied by  Lemma \ref{lem:typeIVtvphitphi}.
Condition (v) is satisfied by Lemma \ref{lem:typeIVindep}.
Note that $\F$ has infinitely many elements since $\F$ is algebraically closed.
Observe that for $1 \leq i \leq 3$,
$\tvphi_i (\zeta)$ and $\tphi_i (\zeta)$ vanish for at most two values 
of $\zeta$.
Observe by Lemma \ref{lem:typeIVthithj} that 
for $0 \leq i < j \leq 3$,
$\tth_i (\zeta) - \tth_j (\zeta)$ and $\tth^*_i (\zeta) - \tth^*_j (\zeta)$
vanish for at most one value of $\zeta$.
Thus conditions (i) and (ii) are satisfied for infinitely many values of $\zeta$.
The result follows.
\end{proof}

\begin{lemma}    \label{lem:typeIVa0ad}   \samepage
\ifDRAFT {\rm lem:typeVIa0ad}. \fi
Assume $\tilde{p}(\zeta)$ is a parameter array of a Leonard system $\tilde{\Phi}(\zeta)$.
Let $\{\ta_i (\zeta)\}_{i=0}^3$ (resp.\ $\{\ta^*_i (\zeta) \}_{i=0}^3$) be the principal
sequence (resp.\ dual principal sequence) of $\tilde{\Phi} (\zeta)$.
Then
\[
 \ta_0 (\zeta) = a_0,  \qquad
 \ta_3 (\zeta) = a_3,  \qquad
 \ta^*_0 (\zeta) = a^*_0, \qquad
 \ta^*_3 (\zeta) = a^*_3.
\]
\end{lemma}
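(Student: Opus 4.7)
The proof will mirror Lemma \ref{lem:typeIa0ad} almost verbatim. The plan is to apply Lemma \ref{lem:key2} to $\tilde{\Phi}(\zeta)$, which expresses each of $\ta_0(\zeta)$, $\ta_3(\zeta)$, $\ta^*_0(\zeta)$, $\ta^*_3(\zeta)$ as an explicit rational combination of the entries of $\tilde p(\zeta)$, and then to substitute the explicit formulas defining $\tilde p(\zeta)$ and simplify.

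First I would handle $\ta_0(\zeta)$ and $\ta_3(\zeta)$. By Lemma \ref{lem:key2} applied to $\tilde{\Phi}(\zeta)$,
\[
\ta_0(\zeta) = \frac{\tth_0(\zeta)\,\tphi_1(\zeta) - \tth_3(\zeta)\,\tvphi_1(\zeta)}{\tphi_1(\zeta) - \tvphi_1(\zeta)}.
\]
Since $\tth_0(\zeta) = \th_0$ and $\tth_3(\zeta) = \th_3$ hold by definition, and $\tvphi_1(\zeta) = \zeta$, $\tphi_1(\zeta) = \tfrac{a_0 - \th_3}{a_0 - \th_0}\zeta$, the common factor $\zeta$ cancels and the right-hand side telescopes to $a_0$. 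An identical calculation using the explicit formulas for $\tvphi_3(\zeta)$ and $\tphi_3(\zeta)$ yields $\ta_3(\zeta) = a_3$.

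For $\ta^*_0(\zeta)$ and $\ta^*_3(\zeta)$, Lemma \ref{lem:key2} mixes unprimed and primed data:
\[
\ta^*_0(\zeta) = \frac{\tth^*_0(\zeta)\,\tphi_3(\zeta) - \tth^*_3(\zeta)\,\tvphi_1(\zeta)}{\tphi_3(\zeta) - \tvphi_1(\zeta)}.
\]
Naive substitution leaves the ratio $\tfrac{a_3 - \th_0}{a_0 - \th_0}$, which I would convert to $\tfrac{a^*_0 - \th^*_3}{a^*_0 - \th^*_0}$ using identity \eqref{eq:a0th0/adth0} of Lemma \ref{lem:Delta0relations}. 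This is the point at which the standing hypothesis $\vphi_1 + \vphi_3 = \phi_1 + \phi_3$ intervenes, via Lemma \ref{lem:Delta0}. After the rewrite, the same cancellation pattern produces $\ta^*_0(\zeta) = a^*_0$. The calculation for $\ta^*_3(\zeta)$ is analogous, this time invoking \eqref{eq:a0thd/a0th0} and \eqref{eq:a0th0/adthd} to rewrite the ratios appearing in $\tphi_1(\zeta)$ and $\tvphi_3(\zeta)$ in starred form.

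There is no genuine obstacle: once Lemmas \ref{lem:key2} and \ref{lem:Delta0relations} are in hand, the argument reduces to routine cancellation in four short rational expressions. The only conceptual step to highlight is the invocation of Lemma \ref{lem:Delta0relations} to pass between unprimed and primed ratios, which is the exact role played by Lemma \ref{lem:vphidphid2} in the type I version of this argument.
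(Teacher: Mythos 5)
Your proposal is correct and follows essentially the same route as the paper, whose proof of this lemma is simply ``similar to Lemma \ref{lem:typeIa0ad}'': apply Lemma \ref{lem:key2} to $\tilde{\Phi}(\zeta)$, use $\tth_0(\zeta)=\th_0$, $\tth_3(\zeta)=\th_3$, $\tth^*_0(\zeta)=\th^*_0$, $\tth^*_3(\zeta)=\th^*_3$ together with the formulas for $\tvphi_1,\tvphi_3,\tphi_1,\tphi_3$, and cancel $\zeta$. Your observation that the passage from unprimed to starred ratios via Lemma \ref{lem:Delta0relations} (legitimate here since type IV forces $\vphi_1+\vphi_3=\phi_1+\phi_3$ by Lemmas \ref{lem:Omega} and \ref{lem:Omega0pre}) plays the role of Lemma \ref{lem:vphidphid2} is exactly right.
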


\begin{proof}
Similar to the proof of Lemma \ref{lem:typeIa0ad}.
\end{proof}

\begin{proofof}{Proposition \ref{prop:typeIV}}
By Lemma \ref{lem:typeIVtparray} $\tilde{p}(\zeta)$ is a parameter array over $\F$
for infinitely many values of $\zeta$.
Assume $\tilde{p}(\zeta)$ is the parameter array of a Leonard system
$\tilde{\Phi}(\zeta)$.
By Lemma \ref{lem:typeIVindep} $\tilde{\Phi}(\zeta)$ has fundamental parameter $0$.
By the construction and Lemma \ref{lem:typeIVa0ad}
$\tilde{\Phi}(\zeta)$ has the same end-entries as $\Phi$.
\end{proofof}

\begin{proofof}{Theorem \ref{thm:main2}; case (ii)}
Follows from Proposition \ref{prop:typeIV}
and Lemmas \ref{lem:characterize}, \ref{lem:classify}.
\end{proofof}

\bigskip
\bigskip\noindent
{\Large \bf
Acknowledgement}

\medskip \noindent
The author would like to thank Paul Terwilliger for giving this paper a close reading
and offering many valuable suggestions.

\bigskip
\bigskip\noindent
{\Large \bf
Appendix A}

\medskip \noindent
Let $\Phi$ be a Leonard system over $\F$ with diameter $d \geq 3$.
Let 
\[
(\{\th_i\}_{i=0}^d, \{\th^*_i\}_{i=0}^d, \{\vphi_i\}_{i=1}^d, \{\phi_i\}_{i=1}^d)
\]
be the parameter array of $\Phi$, and let $\beta$ be the fundamental
parameter of $\Phi$.
Let $\{a_i\}_{i=0}^d$ (resp.\ $\{a^*_i\}_{i=0}^d$) be the principal parameter
(resp.\ dual principal parameter) of $\Phi$.
Let the scalar $\Delta$ be from \eqref{eq:Delta}:
\[
  \Delta = (a_0 - \th_0)(a^*_0-\th^*_d) - (a_d-\th_0)(a^*_0-\th^*_0).
\]
Assume $\vphi_1+\vphi_d \neq \phi_1 + \phi_d$.
Note that $\Delta \neq 0$ by Lemma \ref{lem:Delta0}.
By Proposition \ref{prop:Delta0}
the type of $\Phi$ is one of I, II, III$^+$, III$^-$.
For each type, we display formulas that represent the parameter array
in terms of $\beta$ and the end-entries.

\bigskip\noindent
{\bf Type I}.

\bigskip

Pick a nonzero $q \in \F$ such that $\beta = q + q^{-1}$.

\bigskip

\noindent
For $0 \leq i \leq d$ define
\[
 K_i =
  \frac{(q^i-1)(\th_0-\th_d)}
         {(q^{d-1}-1)(q^d-1) \Delta^*} \, L_i,
\]
where
\[
 L_i = (q^{2d-i-1}-1)(a_0-\th_0)(a^*_d-\th^*_0) - q^{d-i}(q^{i-1}-1)(a_0-\th_d)(a^*_0-\th^*_0).
\]
Then for $0 \leq i \leq d$
\begin{align*}
 \th_i &= \th_0 + K_i,   &
 \th^*_i &= \th^*_0 + K^*_i,
\end{align*}
and for $1 \leq i \leq d$
{\small
\begin{align*}
 \vphi_i &= K^{\Downarrow}_{d-i+1} K^*_i 
      - \frac{(q^i-1)(q^{d-i+1}-1)(q^{d-1}+1)(a_0-\th_d)(a_d-\th_0)(a^*_0-\th^*_0)(\th^*_0-\th^*_d)}
                {(q^d-1)^2 \Delta},
\\
 \phi_i &=  K_{d-i+1} K^*_i
      - \frac{(q^i-1)(q^{d-i+1}-1)(q^{d-1}+1)(a_0-\th_0)(a_d-\th_0)(a^*_0-\th^*_0)(\th^*_0-\th^*_d)}
               {(q^d-1)^2 \Delta}.
\end{align*}
}

\bigskip\noindent
{\bf Type II}. 

\bigskip

\noindent
For $0 \leq i \leq d$ define
\[
 K_i =  \frac{ i (\th_0-\th_d)}
                 {d(d-1) \Delta^*} \, L_i,
\]
where
\[
 L_i = (2d-i-1)(a_0-\th_0)(a^*_d-\th^*_0) - (i-1)(a_0-\th_d)(a^*_0-\th^*_0).
\]
Then for $0 \leq i \leq d$
\begin{align*}
 \th_i &= \th_0 + K_i,   &
 \th^*_i &= \th^*_0 + K^*_i,
\end{align*}
and for $1 \leq i \leq d$
\begin{align*}
 \vphi_i &= K^{\Downarrow}_{d-i+1} K^*_i 
      - \frac{2 i (d-i+1) (a_0-\th_d)(a_d-\th_0)(a^*_0-\th^*_0)(\th^*_0-\th^*_d)}
                {d^2  \Delta},
\\
 \phi_i &=  K_{d-i+1} K^*_i
      - \frac{2 i (d-i+1) (a_0-\th_0)(a_d-\th_0)(a^*_0-\th^*_0)(\th^*_0-\th^*_d)}
               {d^2 \Delta}.
\end{align*}

\bigskip\noindent
{\bf Type III$^+$}. 

\bigskip

\noindent
For $0 \leq i \leq d$ define
\[
 K_i =
  \begin{cases}
   \displaystyle - \frac{ i (\th_0-\th_d)}{d}    & \text{ if $i$ is even},
  \\ 
   \rule{0mm}{6mm}
   \displaystyle 
   \;\; \frac{ \th_0-\th_d}
           {d \Delta^*} \, L_i    & \text{ if $i$ is odd},
   \end{cases}
\]
where
\[
  L_i = (2d-i-1)(a_0-\th_0)(a^*_d-\th^*_0) + (i-1)(a_0-\th_d)(a^*_0-\th^*_0).
\]
Then for $0 \leq i \leq d$
\begin{align*}
 \th_i &= \th_0 + K_i,   &
 \th^*_i &= \th^*_0 + K^*_i,
\end{align*}
and for $1 \leq i \leq d$
\begin{align*}
 \vphi_i &=
  \begin{cases}
   K^{\Downarrow}_{d-i+1} K^*_i 
      - \frac{2 i (d-1) (a_0-\th_d)(a_d-\th_0)(a^*_0-\th^*_0)(\th^*_0-\th^*_d)}
                {d^2  \Delta}
                                          &  \text{ if $i$ is even},
   \\
   K^{\Downarrow}_{d-i+1} K^*_i 
      - \frac{2 (d-i+1) (d-1) (a_0-\th_d)(a_d-\th_0)(a^*_0-\th^*_0)(\th^*_0-\th^*_d)}
                {d^2  \Delta}
                                          &  \text{ if $i$ is odd},
  \end{cases}
\\
 \phi_i &=  
  \begin{cases}
   K_{d-i+1} K^*_i
      - \frac{2 i (d-1) (a_0-\th_0)(a_d-\th_0)(a^*_0-\th^*_0)(\th^*_0-\th^*_d)}
               {d^2 \Delta}
                                       & \text{ if $i$ is even},
  \\
   K_{d-i+1} K^*_i
      - \frac{2 (d-i+1)(d-1) (a_0-\th_0)(a_d-\th_0)(a^*_0-\th^*_0)(\th^*_0-\th^*_d)}
               {d^2 \Delta}
                                       & \text{ if $i$ is odd}.
  \end{cases}
\end{align*}

\bigskip\noindent
{\bf Type III$^-$}. 

\bigskip

\noindent
For $0 \leq i \leq d$ define
\[
 K_i =  \frac{\th_0-\th_d}{(d-1) \Delta^*} \, L_i
\]
where
\[
 L_i =    
  \begin{cases}
     i (a_0-\th_0)(a^*_d-\th^*_0) + i (a_0-\th_d)(a^*_0-\th^*_0)
     & \text{ if $i$ is even},
  \\  
     (2d-i-1)(a_0-\th_0)(a^*_d-\th^*_0) - (i-1)(a_0-\th_d)(a^*_0-\th^*_0)
                                          & \text{ if $i$ is odd}.
  \end{cases}
\]
Then for $0 \leq i \leq d$
\begin{align*}
 \th_i &= \th_0 + K_i,   &
 \th^*_i &= \th^*_0 + K^*_i,
\end{align*}
and for $1 \leq i \leq d$
\begin{align*}
 \vphi_i &=
  \begin{cases}
   K^{\Downarrow}_{d-i+1} K^*_i                &  \text{ if $i$ is even},
   \\
   K^{\Downarrow}_{d-i+1} K^*_i 
      - \frac{2 (a_0-\th_d)(a_d-\th_0)(a^*_0-\th^*_0)(\th^*_0-\th^*_d)}
                {\Delta}
                                          &  \text{ if $i$ is odd},
  \end{cases}
\\
 \phi_i &=  
  \begin{cases}
   K_{d-i+1} K^*_i                   & \text{ if $i$ is even},
  \\
   K_{d-i+1} K^*_i
      - \frac{2 (a_0-\th_0)(a_d-\th_0)(a^*_0-\th^*_0)(\th^*_0-\th^*_d)}
               { \Delta}
                                       & \text{ if $i$ is odd}.
  \end{cases}
\end{align*}

\bigskip

{

\small

}

\bigskip\bigskip\noindent
Kazumasa Nomura\\
Tokyo Medical and Dental University\\
Kohnodai, Ichikawa, 272-0827 Japan\\
email: knomura@pop11.odn.ne.jp

\medskip\noindent
{\small
{\bf Keywords.} Leonard pair, tridiagonal pair, tridiagonal matrix.
\\
\noindent
{\bf 2010 Mathematics Subject Classification.} 05E35, 05E30, 33C45, 33D45
}

\end{document}